\numberwithin{equation}{section}
\newtheorem{theorem}{Theorem}[section]
\newtheorem{lemma}[theorem]{Lemma}
\newtheorem{proposition}[theorem]{Proposition}
\newtheorem{definition}[theorem]{Definition}
\newtheorem{remark}[theorem]{Remark}
\def\R{\mathbb R}
\begin{document}
\title
[\hfil Almost periodicity]
{On the almost periodicity of nonautonomous evolution equations and application to Lotka--Volterra systems}
\author[K. Khalil]
{Kamal Khalil}

\address{Kamal Khalil\newline
 Department of Mathematics, Faculty of Sciences Semlalia, Cadi Ayyad University\\
 B.P. 2390, 40000 Marrakesh, Morocco.}
\email{kamal.khalil.00@gmail.com}


\subjclass[2000]{34G10, 47D06}
\keywords{Semilinear evolution equations; almost periodicity; nonautonomous  \hfill\break\indent  evolution equations; exponential dichotomy; Lotka--Volterra predator--prey models}

\begin{abstract}
Consider the nonautonomous semilinear evolution equation of type:  $(\star) \;  u'(t)=A(t)u(t)+f(t,u(t)), \; t \in \mathbb{R},$ where $ A(t), \ t\in \mathbb{R} $ is a family of closed linear operators in a Banach space $X$, the nonlinear term $f$, acting on some real interpolation spaces, is assumed to be almost periodic just in a weak sense (i.e. in Stepanov sense) with respect to $t$ and Lipschitzian in bounded sets with respect to the second variable. We prove the existence and uniqueness of almost periodic solutions in the strong sense (Bohr sense) for equation $ (\star)  $ using the exponential dichotomy approach. Then, we establish a new composition result of Stepanov almost periodic functions by assuming just the continuity of $f$ in the second variable. Moreover, we provide an application to a nonautonomous system of reaction-diffusion equations describing a Lotka--Volterra predator--prey model with diffusion and time-dependent parameters in a generalized almost periodic environment. 
\end{abstract}

\maketitle
\section{Introduction}
The concept of almost periodicity introduced by H. Bohr in \cite{H.Bohr} is a trivial generalization of the well-known periodicity. Both conecpts are given in the literature by means of continuous bounded functions. However, W. Stepanov in \cite{Step} introduced a more general definition of almost periodicity, this time, it is through locally integrable functions with certain weak appropriate boundedness conditions. Hence, an almost periodic function in Stepanov sense is not necessarily continuous or bounded. The motivation for considering almost periodic functions in Stepanov sense is the existence of a more wide class of unbounded and discontinuous functions describing periodic and almost periodic motions, see Section \ref{Section5} for more details about examples from this class.\\

Now, consider the following nonautonomous semilinear evolution equation,
\begin{equation}
     x'(t)= A(t)x(t) + f(t, x(t))\quad \text{for}\; t \in \mathbb{R},  \label{Eq_1}    
\end{equation}
where $ (A(t),D(A(t))) $, $t\in \mathbb{R}$  is a family of linear closed operators defined on a Banach space $ X $ (not necessarily densely defined) that generates an analytic evolution family $ (U(t,s))_{t\geq s} $ which has an exponential dichotomy on $\mathbb{R}$. The nonlinear term $ f: \mathbb{R}\times X^{t}_{\alpha} \longrightarrow X $ is just almost periodic in Stepanov sense (i.e. in the weak sense) of order $ 1\leq p <\infty $ with respect to $t$ and Lipschitzian in bounded sets with respect to the second variable, where $ X^{t}_{\alpha},\,  0<\alpha <1 $, $ t\in \mathbb{R} $ are some continuous interpolation spaces with respect to the linear operators $ (A(t),D(A(t))) $, $t\in \mathbb{R}$, such that the following continuous embedding hold
$$
D(A(t)) \hookrightarrow  X^{t}_{\beta}  \hookrightarrow  X^{t}_{\alpha} \hookrightarrow  X \quad \text{ for all }  0 < \alpha < \beta < 1, \, t\in \mathbb{R}. $$  
In this work, we establish sufficient weak conditions on the forcing terms $ A(\cdot) $ and $ f $ insuring the existence and uniqueness of almost periodic solutions (in the strong sense) to equation \eqref{Eq_1}. Moreover, we introduce a new composition result of Stepanov almost periodic functions of order $1\leq p <\infty$. That is, for given any function $f:\mathbb{R}\times Y \longrightarrow X $ which is Stepanov almost periodic with respect to $t$ and continuous with respect to $x$  and  $u:\mathbb{R}\longrightarrow Y$ is Stepanov almost periodic for any Banach spaces $X$ and $Y$. Then, $ f(\cdot,u(\cdot)) $ is also Stepanov almost periodic. Our composition result require just the continuity of $f$ in the second argument while the literature's works assume the uniform Lipschitz condition which is very stronger as hypothesis than ours, see \cite{AkdEss,Moi,Ding,Li}. Technically, our strategy concerns to study at first the following linear inhomogeneous equation: 
\begin{equation}
 u'(t)=A(t)u(t)+h(t), \quad   t\in\mathbb{R}, \label{Eq_2}
 \end{equation}
where $ h $ is almost automorphic in Stepanov sense of order $1\leq p <\infty$. We show that the unique mild solution given by: 
\begin{equation*}
u(t)=\int_{\mathbb{R}}G (t,s)h(s)ds, \quad  t\in\mathbb{R},
\end{equation*}
where $ G(\cdot,\cdot) $ is the associated Green function, is almost periodic in the strong sense. Hence, by our suitable composition result, we prove the results to equation \eqref{Eq_1} using a fixed point argument. \\

Furthermore, an illustrating application to a nonautonomous system of reaction--diffusion equations describing a Lotka--Volterra  predator--prey model with diffusion and time-dependent parameters in a generalized almost periodic environment is provided. In fact, we study the existence and uniqueness of almost periodic solutions to the following model: 
\begin{equation}
 \left\{
    \begin{aligned}
     \frac{\partial}{\partial t}u(t,x)&=  d_1 (t)\Delta u(t,x) + a(t)u(t, x)- c_1 (t)\dfrac{ v(t,x) u(t,x) }{1+| \nabla v(t,x)|}, \, t \in \mathbb{R}, x\in \Omega, \\   
     \frac{\partial}{\partial t}u(t,x)&=  d_2 (t)\Delta v(t,x) - b(t)v(t, x)+ c_2 (t)\dfrac{ u(t,x) v(t,x)}{1+|\nabla u(t,x)|}, \,t \in \mathbb{R}, x\in \Omega, \\
     u(t,x)|_{\partial \Omega}& = 0 ; \  v(t,x)|_{\partial \Omega} =0, \; t \in \mathbb{R}, x\in \partial\Omega  \label{Eq3Chapter1}
    \end{aligned}
  \right. 
\end{equation}
in a bounded domain (a habitat) $\Omega \subset \R^N$ ($N \geq 1$) with Lipschitz boundary $ \partial \Omega $, provided that the time parameters $ d_i $, $ a$, $b$ and $c_i$, $i=1,2$, are just almost periodic in Stepanov sense, where $ d_i $ are the diffusion terms of prey and predator populations, $a$ and $b$ are respectively the growth terms without interaction of the populations. Besides, the (nonlinear) interaction terms mean that, the predators consume preys in proportion to the number of the (space) variation of the predators with the proportion $c_{1} > 0$, while the predator population grows proportionally to the variation of small preys (they can consume the proportion $c_{2} > 0$). For more details about the study of models of type \eqref{Eq3Chapter1}, we refer to \cite{Cantrell,Hetzer,Langa,AAlaoui,Perthame} and references therein.  Therefore, we consider the product Banach space $X:=C_{0}(\overline{\Omega})\times C_{0}(\overline{\Omega}) $ and we prove, under suitable considerations, that our model \eqref{Eq3Chapter1} is equivalent to equation  \eqref{Eq_1}. Then, we establish our results.  \\

In the literature, we found several works devoted to the existence and uniqueness of almost periodic solutions to equation \eqref{Eq_1}, see \cite{AkdEss,Moi,BHR,Ding,GR,Li,Man-Sch,Vu}. In \cite{BHR,GR,Vu} the authors proved the existence and uniqueness of almost periodic solutions to equation \eqref{Eq_2} (i.e. equation \eqref{Eq_1} in the linear inhomogeneous case) where $ A(\cdot) $ is periodic and $h$ is almost periodic in the strong sense. Moreover, in \cite{Man-Sch} the authors proved that equation \eqref{Eq_2} has a unique almsot periodic solution provided that $ R(\omega,A(\cdot)) $, for some $ \omega \in \mathbb{R} $, and $h$ are almost periodic in the strong sense. Furthermore, in \cite{AkdEss,Moi,Ding,Li} the authors proved the existence and uniqueness to equation \eqref{Eq_1} where $f$ is Stepanov almost periodic with respect to $t$ and globally Lipschitzian in the second variable.  \\

The organization of this paper is as follows: Section \ref{Section2} is devoted to preliminaries, notations and main hypotheses of this work. In Section \ref{Section3} we give our new composition result of Stepanov almost periodic functions of order $1\leq p<\infty$. Section \ref{Section4} is provided to the existence and uniqueness results of almost periodic solution to equations  \eqref{Eq_2} and  \eqref{Eq_1} respectively. In Section \ref{Section5}, we study the existence and uniqueness of almost periodic solutions to our model \eqref{Eq3Chapter1}.

%

\section{Notations, Preliminaries and main hypotheses}\label{Section2}

In this section, we recall notations, definitions and preliminary results needed in the following. 
\subsection*{Notations}
Throughout this work, $(X,\|\cdot\|) $ and $(Y,\|\cdot\|_{Y})$ are two any Banach spaces and $(\mathcal{L}(X),\|\cdot\|_{\mathcal{L}(X)})$ is the Banach space of bounded linear operators in $ X $. $BC(\mathbb{R},X)$ equipped with the sup norm, denoted by $ \| \cdot \|_{\infty} $, is the Banach space of bounded continuous functions $f$ from $\mathbb{R}$ into $ X$. Moreover, for $ 1\leq p <\infty $, $ q $ denotes its conjugate exponent defined by $ \dfrac{1}{p} +\dfrac{1}{q}=1$ if $ p\neq  1$ and $ q=\infty $ if $ p=1 $. By  $ L^{p}_{loc}(\mathbb{R},X)$ (resp. $ L^{p}(\mathbb{R},X)$), we designate the space (resp. the Banach space) of all equivalence classes of measurable functions $f$ from $\mathbb{R}$ into $ X$  such that $\|f(\cdot)\|^{p}$ is locally integrable (resp. integrable). We denote by $\Gamma(\cdot)$ the gamma function defined by $ \Gamma(z):= \displaystyle \int_{0}^{\infty} s^{z-1} e^{-s} ds  $ for $z>0$. Moreover, in the following it is assumed that the resolvent set $ \rho(A) $ of $(A,D(A))$ is defined by 
$ \rho(A):= \lbrace \lambda \in \mathbb{C}: \, (\lambda -A)^{-1}\; \text{exists in}\; \mathcal{L}(X)\rbrace \subset \mathbb{C}$
and the spectrum $ \sigma(A) $ of $(A,D(A))$ is defined as: $ \sigma(A):= \mathbb{C}\setminus \rho(A) $. For $\lambda \in \rho(A)$, the resolvent operator $R(\lambda, A)$ is defined by $ R(\lambda, A) := (\lambda - A)^{-1}.  $
\subsection*{Evolution families and intermediate spaces}
Let $(A(t),D(A(t))), \; t\in \mathbb{R}$ be a family of linear closed  operators on a Banach space $X$ that satisfies the following conditions: there exist constants  $ \omega \in \mathbb{R}, \theta  \in (\frac{\pi}{2},\pi ), M>0$ and $ \eta, \nu \in (0,1] $ with $ \eta +\nu >1 $ such that 
\begin{equation}
\left\{
    \begin{aligned}
  &    \Sigma_{\omega,\theta}:= \lbrace z \in \mathbb{C}: z \neq 0 , \mid arg(z  ) \mid \leq \theta   \rbrace \subset \rho(A(t)-\omega) \\
  &    \| \lambda R(\lambda,A(t)-\omega)  \|_{\mathcal{L}(X)} \leq L,         \label{AquTerCon1}
    \end{aligned} 
  \right. 
\end{equation}
\begin{eqnarray}
\|(A(t)-\omega) R(\lambda,A(t)-\omega)[R(\omega,A(t))-R(\omega,A(s))  ]\|_{\mathcal{L}(X)} \leq \dfrac{M |t-s |^{\eta}}{|\lambda |^{\nu}} \label{AquTerCon2}
\end{eqnarray}
for all $ t \geq s,\, t,s \in \mathbb{R} $ and $ \lambda \in \Sigma_{\omega,\theta}$. The domains $ D(A(t)) $ of the operators $ A(t) $ may change with respect to $t$ and do not required to be dense in $X$.  The condition \eqref{AquTerCon1} means that each operator $ A(t) $ generates a bounded analytic semigroup $  (T_{t}(s))_{s\geq 0} $ where the semigroups may be not strongly continuous at $ 0 .$  The condition \eqref{AquTerCon2} provides some stability in the dependence on $t$ of the operators $ A(t) $. Note that, the above conditions \eqref{AquTerCon1} and \eqref{AquTerCon2} was introduced in \cite{AquTer1} to solve the following Cauchy problem 
\begin{align}
 \left\{
    \begin{array}{ll}
        u'(t)=A(t)u(t), & t\geq s \\
       u(s)=x \in X, & .
    \end{array}  \label{Eq Non Aut Inho}
\right.  
\end{align}
in the parabolic context, where the unique solution (in the classical sense) is given by a two-parameter family of bounded linear operators in $X$, i.e., $u(t)=U(t,s)x$ with $(U(t,s))_{t\geq s} \subset \mathcal{L}(X)$ is an evolution family. More precisely, for $ t>s $ the map $ (t,s)\longmapsto U(t,s) \in \mathcal{L}(X) $ is continuous and continuously differentiable in $t$, $ U(t,s) $  maps $ X $ into $ D(A(t)) $ and it holds that $ \dfrac{\partial U(t,s)}{\partial t}=A(t)U(t,s) .$ Moreover, $ U(t,s) $ and $ (t-s)A(t)U(t,s) $ are exponentially bounded and that satisfy 
$$ U(t,s)U(s,r)=U(t,r) \quad \mbox{and}\quad U(t,t)=I \quad \mbox{for} \quad t\geq s\geq r. $$
Furthermore, for $ s\in \mathbb{R} $ and $ x \in \overline{D(A(s))} $, the unique solution $ t\longmapsto u(t)=U(t,s)x $ in $ C([s,\infty),X)\cap C^{1}((s,\infty),X) $ is continuous at $ t=s $. For more details, we refer to \cite{AquTer1,EN,Lun}. \\

Now, we introduce the interpolation spaces for  the operators $A(t), \ t\in \mathbb{R}$. Let $A$ be a sectorial operator, i.e., $ A $ satisfy \eqref{AquTerCon1} instead of $ A(t) $ (it is well known that $ A $ generates an analytic semigroup $(T_{A}(t))_{t\geq 0}$ on $X$). For $ \alpha  \in (0,1), $ we give the real interpolation spaces: $$ X_{\alpha}:= \overline{D(A)} ^{\|\cdot \|_{\alpha}},  \quad \mbox{where}\quad \|x \|_{\alpha}:= \sup_{\lambda >0} \| \lambda^{\alpha}(A-\omega) x \| \quad \mbox{for all}\; x \in D(A)  .$$  Then, $ (X_{\alpha}, \|\cdot \|_{\alpha}) $ are Banach spaces. Let $ X_{0}:= X ,$ $ X_{1}:= D(A) $ and $ \|x\|_{0}=\|x\|,$ $ \|x\|_{1}=\|(A-\omega) x\| $ be the corresponding norms respectively. 
\begin{eqnarray}
D(A) \hookrightarrow  X_{\beta}  \hookrightarrow  X_{\alpha} \hookrightarrow  X  \label{embiddings}
\end{eqnarray}
for all $ 0 < \alpha < \beta < 1.  $ Let $ A(t), $ $ t\in \mathbb{R} $ which satisfies \eqref{AquTerCon1}, we set $ X_{\alpha }^{t}:=X_{\alpha } $
where $ A(t) $ is taken instead of $ A $ in the definition of the spaces $ X_{\alpha} $, $ 0 < \alpha <  1 $, and the corresponding norms. Then, the embeddings in \eqref{embiddings} hold with uniformly bounded norms in $t \in \mathbb{R}$. Moreover, in the case of a constant domain, i.e., $D:= D(A(t))$, $t\in \mathbb{R}$, we can replace assumption \eqref{AquTerCon2} with the following:\\

There exist constants $\omega \in \mathbb{R} $, $L \geq 0$ and $0 < \mu \leq 1$ such that
\begin{eqnarray}
\| (A(t)-A(s))R(\omega ,A(r)) \| \leq L |t-s | ^{\mu} \quad \text{for } t,s,r \in \mathbb{R}. \label{AquTerCon21}
\end{eqnarray}

A sufficient condition ensuring \eqref{AquTerCon21} is the next:\\
\begin{eqnarray}
\| (\omega - A(t))R(\omega ,A(s))-I_X  \| \leq L_0 |t-s|^{\mu_{0}} \quad \text{for } t,s \in \mathbb{R}  \label{AquTerCon22}
\end{eqnarray}
for some $\omega \in \mathbb{R} $, $L_0 \geq 0$ and $0 < \mu_0 \leq 1$.\\

Now, we introduce exponential dichotomy of an evolution family which is an important tool in our  study, see \cite{AkdEss,EN,Schnaubelt} 
\begin{definition}\cite{AkdEss}
An evolution family $ (U(t,s))_{s\leq t} $ on a Banach space $X$ is called has an exponential dichotomy (or hyperbolic) in $ \mathbb{R} $ if there exists a family of projections $P(t) \in \mathcal{L}(X)$, $t \in \mathbb{R}$, being strongly continuous with respect to $t$, and constants  $\delta ,N > 0$ such that
\begin{itemize}
\item[\textbf{(i)}] $ U(t,s) P(s)=P(t)U(t,s). $
\item[\textbf{(ii)}] $U(t,s): Q(s)X\longrightarrow Q(t)X$ is invertible with the inverse $ \tilde{U}(t,s) $ (i.e., $ \tilde{U}(t,s)=U(s,t)^{-1} $).
\item[\textbf{(iii)}] $\|U(t,s)P(s)\| \leq N e^{-\delta (t-s)}$ and $\| \tilde{U}(s,t)Q(t)\| \leq N e^{-\delta (t-s)}$
\end{itemize} 
for all $t,s \in \mathbb{R}$ with $s \leq t$, where, $Q(t) := I-P(t) .$
\end{definition}
Hence, given a hyperbolic evolution family  $(U(t,s))_{s\leq t} $, then its associated Green function is 
defined by:
\begin{equation}
G(t,s)=\left\{
    \begin{aligned}
     U(t,s) P(s),&  \quad t,s \in \mathbb{R},\; s \leq t,\\   
     -\tilde{U}(t,s) Q(s), &  \quad t,s \in \mathbb{R},\; s > t .\\
    \end{aligned} \label{GreenFun}
  \right. 
\end{equation}
Next, we show that exponential dichotomy can be characterized in many cases, for more details, see \cite{He}.
\begin{remark}\label{Remark1 Examples Gfun}
\textbf{(a)}  If  $U(t,s)=T(t-s)$, $t\geq s$  and  $ (T(t))_{t\geq 0} \subset \mathcal{L}(X)$  is a semigroup of bounded linear operators. In that case, the generator of $ (U(t,s))_{t\geq s} $ is a time constant sectorial operator $ (A,D(A)) $ such that 
 $\sigma (A) \cap \{ \lambda \in \mathbb{C}: \ | Re(\lambda) | \leq \beta \} =\emptyset $ for some $\beta >0$. 
Then $ (U(t,s))_{t\geq s} $ has an exponential dichotomy on $\mathbb{R}$ with constant projections $P(t)=P$ and $Q(t):=Q$, $ t\in \mathbb{R}$ and exponent $\delta:=\beta$.\\

\textbf{(b)}  If $ (A(t))_{t\in \mathbb{R}} $  is $p$-periodic (with $p>0$). In that case the evolution family $ (U(t,s))_{t\geq s} $ is also $p$-periodic in $t$ and $s$ and the spectrum of  'the period map' $ \mathbb{U}_p:= U(t+p,t), \, t\in \mathbb{R}  $ is independent of $t$.  Moreover if $\sigma (\mathbb{U}_p) \cap \{ \lambda \in \mathbb{C}: e^{-p \beta} \leq   | \lambda | \leq e^{p \beta}  \} =\emptyset $, for some $\beta >0$. 
Then $ (U(t,s))_{t\geq s} $ has an exponential dichotomy on $\mathbb{R}$ with $p$-periodic projections $P(t)$ and $Q(t)$, $ t\in \mathbb{R}$ and exponent $\delta:=\beta$.\\

\textbf{(c)}  If  $ (U(t,s))_{t\geq s} $ is exponentially stable i.e., $  \omega(U) <0$, where  $$ \omega(U):= \inf \{ \omega  \in \mathbb{R} : \exists \; M_{\omega} \geq 1 \text{ with } \;  \|U(t,s)\| \leq M_{\omega} e^{\omega (t-s)},  \; t\geq s, \ s \in \mathbb{R}     \} .$$ 

Then $ (U(t,s))_{t\geq s} $ has an exponential dichotomy on $\mathbb{R}$ with projections $P(t)=I_X, \, t\in \mathbb{R}$, (the identity operator of $X$) and exponent $\delta:=-\omega(U)$.
\end{remark}

More generally, from \cite{Schnaubelt}, the exponential dichotomy holds if the following is true: \\

Assume \eqref{AquTerCon21} holds and the semigroups $(T^{t}(\tau))_{\tau \geq 0}$ are hyperbolic with projections $P_t$ and constants $N, \delta >0$ such that 
$ \| A(t) T^{t}(\tau)P_t \| \leq \psi (\tau) $ and  $ \| A(t)T_{Q}^{t}(\tau)Q_t\| \leq \psi(-\tau) $  for  $ \tau >0$  and a function $ \psi$ such that the mapping $\mathbb{R} \ni s \longmapsto \varphi(s):= |s |^{\mu} \psi (s)$  is integrable with  $ L \| \varphi \|_{L^{1}(\mathbb{R})} <1$ .\\

Now, we give some dichotomy estimates of the evolution family $ (U(t,s))_{s\leq t} $ through the interpolation spaces $ X_{\alpha}^{t}, \; 0 \leq \alpha \leq 1 $, see \cite{Br2} and \cite{Moi2} for a more general result.  
\begin{theorem}\label{prop2.3}
Let $ x\in X, \, 0<\alpha \leq 1$. Then, the following hold:
\begin{itemize}
\item[\textbf{(i)}] There exists a constant $ c(\alpha ),$ such that 
\begin{eqnarray}
\|\tilde{U}(t,s) Q(t)x \|^{s}_{\alpha} &\leq & c(\alpha )  e^{-\delta (s-t)} \| x \| \quad \mbox{for}\;  t < s. \label{prop2.3 estim1}
\end{eqnarray}
\item[\textbf{(ii)}] There exists a constant $ m(\alpha ),$ such that 
\begin{eqnarray}
\| U(t,s) P(s)x\|^{t}_{\alpha} &\leq & m(\alpha )(t-s)^{\alpha -1} e^{- \gamma (t-s)} \| x \| \quad \mbox{for}\; t > s. \label{prop2.3 estim2}
\end{eqnarray}
\end{itemize}
\end{theorem}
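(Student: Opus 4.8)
The plan is to obtain both estimates from one uniform-in-$t$ parabolic smoothing bound for the evolution family, combined with the dichotomy estimates in the definition above. The bound I would isolate first is: there is a constant $C(\alpha)$, independent of $t>r$, with
\begin{equation*}
\|U(t,r)z\|^{t}_{\alpha}\leq C(\alpha)\,(t-r)^{-\alpha}\,e^{\omega(t-r)}\,\|z\|,\qquad z\in X .
\end{equation*}
This is the heart of the matter, and I would deduce it from the interpolation inequality $\|y\|^{t}_{\alpha}\leq c\,\|y\|^{1-\alpha}\,\|(A(t)-\omega)y\|^{\alpha}$ applied to $y=U(t,r)z\in D(A(t))$, using the two quoted facts that $U(t,r)$ and $(t-r)A(t)U(t,r)$ are exponentially bounded (so $\|U(t,r)\|\leq c\,e^{\omega(t-r)}$ and $\|(A(t)-\omega)U(t,r)\|\leq c\,(t-r)^{-1}e^{\omega(t-r)}$). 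The constant $c$ in the interpolation inequality is uniform in $t$ precisely because, as recorded after \eqref{embiddings}, the embeddings $D(A(t))\hookrightarrow X^{t}_{\alpha}\hookrightarrow X$ hold with norms bounded uniformly in $t$.

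For part (ii) I would split at the midpoint $r=\frac{t+s}{2}$ and write $U(t,s)P(s)x=U(t,r)\,y$ with $y:=U(r,s)P(s)x=P(r)U(r,s)x$. Estimate (iii) of the definition gives $\|y\|\leq N\,e^{-\delta(r-s)}\|x\|$, while the smoothing bound above applied to $U(t,r)y$ supplies the singular factor $(t-r)^{-\alpha}=(\frac{t-s}{2})^{-\alpha}$; the product is a negative power of $t-s$ times exponential decay, which is the bound claimed in (ii). Because $y$ already lies in the stable range $P(r)X$, the term $U(t,r)y=U(t,r)P(r)y$ carries its own decay, so the exponential factor can be recovered for any $\gamma\in(0,\delta)$ after one further harmless splitting; I would clear the elementary growth/decay bookkeeping by a short case distinction between $t-s\leq 1$ and $t-s\geq 1$.

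For part (i) no singular factor is present, and the mechanism is instead the uniform regularity of the unstable fibres. I would first show $\|\xi\|^{t}_{\alpha}\leq c\,\|\xi\|$ for every $\xi\in Q(t)X$, with $c$ independent of $t$: using the invertibility of $U$ on the unstable subspaces, write $\xi=U(t,t-1)\zeta$ with $\zeta:=\tilde{U}(t-1,t)\xi\in Q(t-1)X$, so that $\xi\in D(A(t))\hookrightarrow X^{t}_{\alpha}$ and the smoothing bound over the unit step (where $(t-(t-1))^{-\alpha}=1$) gives $\|\xi\|^{t}_{\alpha}\leq C(\alpha)\,\|\zeta\|\leq C(\alpha)\,N\,e^{-\delta}\|\xi\|$ by (iii). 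Applying this to $\xi=\tilde{U}(t,s)Q(s)x\in Q(t)X$ and then invoking the dichotomy decay $\|\tilde{U}(t,s)Q(s)x\|\leq N\,e^{-\delta(s-t)}\|x\|$ yields (i) at once.

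The step I expect to be the genuine obstacle is the uniform-in-$t$ smoothing bound, because the target norm $\|\cdot\|^{t}_{\alpha}$ itself varies with $t$ through $A(t)$: keeping both the interpolation constant and the estimate on $\|(A(t)-\omega)U(t,r)\|$ uniform in $t$ is exactly where hypotheses \eqref{AquTerCon1}--\eqref{AquTerCon2} (or, for constant domains, \eqref{AquTerCon21}) and the uniform boundedness of the embeddings \eqref{embiddings} are indispensable. Everything after that is dichotomy bookkeeping.
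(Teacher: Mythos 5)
Your proof is essentially correct, but there is no argument in the paper to compare it against: the paper states Theorem \ref{prop2.3} without proof, referring to \cite{Br2} and \cite{Moi2}, and your argument reconstructs in substance the standard one found there. The core is the same: the uniform smoothing estimate $\|U(t,r)z\|^{t}_{\alpha}\leq C(\alpha)(t-r)^{-\alpha}e^{\omega(t-r)}\|z\|$, obtained from the Acquistapace--Terreni facts that $U(t,r)$ and $(t-r)A(t)U(t,r)$ are exponentially bounded via the interpolation inequality $\|y\|^{t}_{\alpha}\leq c\,\|y\|^{1-\alpha}\|(A(t)-\omega)y\|^{\alpha}$, whose constant is $t$-uniform because it depends only on the sectoriality constant in \eqref{AquTerCon1}; this is then combined with the dichotomy decay by a splitting. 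Your treatment of part (i) — first proving $\|\xi\|^{t}_{\alpha}\leq c\,\|\xi\|$ uniformly on the unstable fibres $Q(t)X$ by writing $\xi=U(t,t-1)\zeta$ with $\zeta=\tilde{U}(t-1,t)\xi$ — is exactly the right mechanism, and it is also the correct way to handle the $t$-dependence of the norms $\|\cdot\|^{t}_{\alpha}$ (alternatively absorbed by \textbf{(H4)}). In the cited references the split in (ii) is taken at unit time-steps rather than at the midpoint, which avoids from the outset the $e^{\omega(t-s)/2}$ growth you must patch with the case distinction $t-s\leq 1$ versus $t-s\geq 1$; either bookkeeping works and yields any $\gamma\in(0,\delta)$.

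One caveat worth recording: your mechanism delivers the singularity $(t-s)^{-\alpha}$, whereas the statement as printed claims $(t-s)^{\alpha-1}$; these agree only at $\alpha=\tfrac12$, and for $\alpha>\tfrac12$ your bound does not imply the printed one near $t=s$. This is a defect of the statement, not of your proof: in every subsequent use the paper works with $(t-s)^{-\alpha}$ — for instance the proof of Theorem \ref{ThmExiBC1} evaluates $\int_{-\infty}^{t}(t-s)^{-\alpha}e^{-\gamma(t-s)}\,ds=\gamma^{\alpha-1}\Gamma(1-\alpha)$ — so the exponent $\alpha-1$ in \eqref{prop2.3 estim2} is evidently a typo for $-\alpha$, and what you prove is precisely the estimate the paper actually uses. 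Similarly, the index mismatches in \eqref{prop2.3 estim1} (the vector $\tilde{U}(t,s)Q(t)x$ should lie in $Q(t)X$ yet is measured in $\|\cdot\|^{s}_{\alpha}$, and the projection index differs from the one in the dichotomy definition) are inconsistencies of the statement itself; your reading is the coherent one.
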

\subsection{Almost periodic functions}

In this section, we recall important properties on almost periodic functions in the sense of Bohr and that of Stepanov, \cite{Amerio,H.Bohr,Step}. 
\begin{definition}[H. Bohr \cite{H.Bohr}]
A continuous function $ f:\mathbb{R}\longrightarrow X $ is said to be almost periodic, if for every $\varepsilon > 0,$ there exists $ l_{\varepsilon}  > 0 $, such that for every $ a\in \mathbb{R} $, there exists $ \tau \in \left[a,a+ l_{\varepsilon} \right]  $ satisfying: $$ \| f(t+\tau)-f(t) \| < \varepsilon \quad \mbox{for all}\; t\in \mathbb{R} .$$ The space of all such functions is denoted by $ AP(\mathbb{R},X) .$
\end{definition}
\begin{theorem}[S. Bochner \cite{Boch2}]\label{Theorem charac AP with sequences}
A continuous function $ f:\mathbb{R}\longrightarrow X $ is  almost periodic if and only if for every sequence $(\sigma_{n})_{n \geq 0}$ of real numbers, there exists a subsequence $(s_{n})_{n \geq 0}\subset (\sigma_{n})_{n \geq 0} $ and a continuous function $ g :\mathbb{R}\longrightarrow X$, such that  
\begin{eqnarray}
 g(t)=:\lim_{n} f(t+s_{n}) \; \text{uniformly on } t\in \mathbb{R}. \label{charac AP with sequences}
\end{eqnarray}
\end{theorem}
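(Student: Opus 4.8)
The plan is to prove both implications of this Bochner-type characterization directly from the Bohr definition, exploiting that $X$ is complete and hence $BC(\mathbb{R},X)$ is complete under $\|\cdot\|_\infty$. For the forward implication (Bohr almost periodic $\Rightarrow$ the sequential criterion), I would first record two consequences of the definition. First, $f$ is uniformly continuous on $\mathbb{R}$: given $\varepsilon>0$, fix a length $\ell=l_{\varepsilon/3}$, use uniform continuity of $f$ on the compact interval $[-1,\ell+1]$ to produce $\delta>0$, and for arbitrary $t,t'$ with $|t-t'|<\delta$ translate both by a common $\varepsilon/3$-period $\tau$ chosen so that $t+\tau\in[0,\ell]$; a three-term estimate gives $\|f(t)-f(t')\|<\varepsilon$. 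Second, the range $\{f(t):t\in\mathbb{R}\}$ is totally bounded, hence relatively compact in $X$: cover the compact set $f([0,l_\varepsilon])$ by finitely many $\varepsilon$-balls and push an arbitrary $f(t)$ into $f([0,l_\varepsilon])$ by an $\varepsilon$-period.

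Given a sequence $(\sigma_n)$, I set $f_n:=f(\cdot+\sigma_n)$ and extract a uniformly convergent subsequence in two stages. By relative compactness of the range together with a diagonal argument over a countable dense set $D\subset\mathbb{R}$, I pass to a subsequence $(s_n)\subset(\sigma_n)$ for which $f(r+s_n)$ converges in $X$ for every $r\in D$. The decisive step is to upgrade this pointwise convergence on $D$ to uniform convergence on all of $\mathbb{R}$, and this is where I expect the genuine difficulty: since $\mathbb{R}$ is not compact, equicontinuity plus pointwise convergence (i.e.\ the Arzel\`a--Ascoli mechanism) only yields uniform convergence on compact sets, so the relative density of periods must be used in an essential way. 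Concretely, I would fix $\varepsilon>0$, take $\delta$ from uniform continuity and $\ell=l_\varepsilon$ from the Bohr definition, choose a finite $\delta$-net $r_1,\dots,r_m\in D\cap[0,\ell]$, and for an arbitrary $t$ select an $\varepsilon$-period $\tau\in[-t,-t+\ell]$ so that $t+\tau\in[0,\ell]$ lies within $\delta$ of some $r_j$. Chaining the period estimate $\|f(t+s_n)-f(t+\tau+s_n)\|<\varepsilon$ (valid for every argument, hence uniform in $t$ and $n$), the uniform-continuity estimate $\|f(t+\tau+s_n)-f(r_j+s_n)\|<\varepsilon$, and the pointwise Cauchy estimate at $r_j$, I conclude that $(f_n)$ is uniformly Cauchy on $\mathbb{R}$ with bounds independent of $t$. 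Completeness of $BC(\mathbb{R},X)$ then yields a continuous $g$ with $f_n\to g$ uniformly.

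For the converse, assuming $f$ continuous and satisfying the sequential criterion, I argue by contradiction: if $f$ were not almost periodic, some $\varepsilon_0>0$ would have its set of $\varepsilon_0$-translation numbers fail to be relatively dense, so for every $L$ there is an interval of length $L$ containing no $\varepsilon_0$-period. Observing that $\|f(\cdot+a)-f(\cdot+b)\|_\infty=\sup_{t}\|f(t+(a-b))-f(t)\|$ exceeds $\varepsilon_0$ precisely when $a-b$ is not an $\varepsilon_0$-period, I would inductively build $(s_n)$ by choosing each new $s_{n+1}$ to be the midpoint of an $\varepsilon_0$-period-free interval whose half-length exceeds $\max_{i\le n}|s_i|$; then every difference $s_{n+1}-s_i$ lands inside that interval and is therefore not an $\varepsilon_0$-period. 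Consequently $\|f(\cdot+s_m)-f(\cdot+s_n)\|_\infty\ge\varepsilon_0$ for all $m\neq n$, so no subsequence of $(f(\cdot+s_n))$ can be uniformly Cauchy, contradicting the hypothesis and forcing $f\in AP(\mathbb{R},X)$. The only subtle point in this direction is verifying that the midpoint choice keeps all $n$ differences simultaneously inside the bad interval, which the half-length condition guarantees.
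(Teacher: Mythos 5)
The paper does not prove this statement at all: it is quoted as Bochner's classical characterization with a citation to \cite{Boch2}, so there is no internal proof to compare against. Judged on its own terms, your argument is the standard and correct proof of Bochner's criterion, and both implications are sound. In the forward direction you correctly assemble the three classical ingredients: uniform continuity and total boundedness of the range via translation by a common almost period into a fixed compact window, a diagonal extraction over a countable dense set (here you implicitly use completeness of $X$ to turn total boundedness of the range into compactness of its closure, which you flagged up front), and the decisive globalization step, where the relative density of $\varepsilon$-periods replaces the compactness that Arzel\`a--Ascoli would otherwise require; your chained estimate $\|f(t+s_n)-f(t+\tau+s_n)\|+\|f(t+\tau+s_n)-f(r_j+s_n)\|+\text{(Cauchy at }r_j\text{)}$ is exactly right, since the period bound holds at every argument and hence uniformly in $t$ and $n$. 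In the converse, the midpoint construction is the classical one, and your half-length condition $L>\max_{i\le n}|s_i|$ does guarantee $s_{n+1}-s_i$ lies in the period-free interval, so $\|f(\cdot+s_m)-f(\cdot+s_n)\|_\infty\ge\varepsilon_0$ for all $m\ne n$ and no subsequence can be uniformly Cauchy.

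Two cosmetic points you should tighten in a written version. First, the phrase ``exceeds $\varepsilon_0$ precisely when $a-b$ is not an $\varepsilon_0$-period'' should read ``is at least $\varepsilon_0$ whenever $a-b$ is not an $\varepsilon_0$-period'': the negation of the Bohr condition gives some $t$ with $\|f(t+(a-b))-f(t)\|\ge\varepsilon_0$, and only this one-sided, non-strict implication is needed. Second, in the uniform-continuity step you must fix $\delta\le 1$ (or enlarge the window $[-1,\ell+1]$ accordingly) so that the translated point $t'+\tau$ actually lands in the compact interval on which you invoked uniform continuity; as stated the choice of window already anticipates this, but the constraint on $\delta$ should be made explicit. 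Neither point affects the validity of the proof.
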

\begin{definition}[S. Bochner \cite{Boch2}]
A continuous function $ f:\mathbb{R}\longrightarrow X $ is said to be almost automorphic if for every sequence $(\sigma_{n})_{n \geq 0}$ of real numbers, there exist a subsequence $(s_{n})_{n \geq 0}\subset (\sigma_{n})_{n \geq 0} $ and a function $g \in L^{\infty}(\mathbb{R},X)$, such that the following limits
$$ g(t)=:\lim_{n} f(t+s_{n})\quad \mbox{and}\quad f(t)=\lim_{n} g(t-s_{n})  $$ 
are well-defined for each $ t\in \mathbb{R}  .$ 
The space of all such functions will be denoted $ AA(\mathbb{R},X) .$
\end{definition}
\begin{definition}
Let $(Z,\|\cdot \|_{Z})$ be any Banach space. A continuous function $F:\mathbb{R}\times\mathbb{R} \rightarrow Z$  is said to be bi-almost periodic if for every $\varepsilon > 0,$ there exists $ l_{\varepsilon}  > 0 $, such that for every $ a\in \mathbb{R} $, there exists $ \tau \in \left[a,a+ l_{\varepsilon} \right]  $ satisfying: $$ \| F(t+\tau,s+\tau)-F(t,s) \|_{Z} < \varepsilon \quad \mbox{for all}\; t,s \in \mathbb{R} .$$ The space of all such functions is denoted by $ bAP(\mathbb{R},X) .$
\end{definition} 
\begin{definition} Let $ 1\leq p< \infty $.  A function $ f\in L^{p}_{loc} (\mathbb{R},X) $ is said to be bounded in Stepanov sense if $$ \displaystyle \sup_{t\in \mathbb{R}}  \left( \int_{\left[ t,t+1\right] } \|f(s)\|^{p}ds\right) ^{\frac{1}{p}}=\displaystyle \sup_{t\in \mathbb{R}}  \left( \int_{\left[ 0,1\right] } \|f(t+s)\|^{p}ds\right) ^{\frac{1}{p}} <\infty.$$ 
The space of all such functions is denoted by $ BS^{p} (\mathbb{R},X)$ and it is provided with the following norm:
\begin{eqnarray*} 
   \|f\|_{BS^{p} }&:=& \displaystyle \sup_{t\in \mathbb{R}}  \left( \int_{\left[ t,t+1\right] } \|f(s)\|^{p}ds\right) ^{\frac{1}{p}} \\ & =& \sup_{t\in \mathbb{R}}\|f(t+\cdot)\|_{L^{p} (\left[ 0,1\right],X)} .
\end{eqnarray*}  
\end{definition}
\begin{definition}[Bochner transform] \label{Definition Bochner transform}
Let $f\in L^{p}_{loc} (\mathbb{R},X) $ for $ 1\leq p< \infty $. The Bochner transform of $f$ is the function $f^b :\mathbb{R}\longrightarrow L^{p} (\left[ 0,1\right],X) $ defined by $$ (f^b (t))(s)= f(t+s) \quad \mbox{for} \; s\in \left[ 0,1\right], \; t\in \mathbb{R}.$$
\end{definition}

 Now, we give the definition of almost periodicity in Stepanov sense.
\begin{definition}\label{Definition APSp} Let $ 1\leq p< \infty $. A function $ f \in L^{p}_{loc} (\mathbb{R},X) $ is said to be almost periodic in the sense of Stepanov (or $S^p$-almost periodic), if for every $\varepsilon > 0,$ there exists $ l_{\varepsilon}  > 0 $, such that for every $ a\in \mathbb{R} $, there exists $ \tau \in \left[a,a+ l_{\varepsilon} \right]  $ satisfying $$ \displaystyle \left( \int_{\left[ t,t+1\right] }\| f(s+\tau)-f(s) \|^{p}ds\right)^{\frac{1}{p}}  < \varepsilon \quad \mbox{for all}\; t\in \mathbb{R}.$$ The space of all such functions is denoted by $ APS^{p}(\mathbb{R},X) .$ 
\end{definition}
\begin{remark}\label{RemCompSpAp}

\textbf{(a)} Every (Bohr) almost periodic function is $S^p$-almost periodic for $ 1\leq p< \infty $. The converse is not true in general (see Proposition \ref{PropCovApS}).\\

\textbf{(b)} For all $ 1\leq p_1 \leq p_2 < \infty $, if $f$ is $S^{p_2}$-almost periodic, then $f$ is $S^{p_1}$-almost periodic. \\

\textbf{(c)} The Bochner transform of an $X $-valued function is a $L^{p} (\left[ 0,1\right],X) $-valued function. Moreover, a function $f $ is $S^p$-almost periodic if and only if $f^{b}$ is (Bohr) almost periodic. 
\end{remark} 
Using the Bochner transform i.e. Definition \ref{Definition Bochner transform} and Theorem \ref{Theorem charac AP with sequences}, we can deduce easily the following characterization of Stepanov almost periodicity using sequences.
\begin{theorem}\label{Theorem charac APSp with sequences} Let  $f\in L^{p}_{loc} (\mathbb{R},X) $. The function $f$ is $S^p $-almost periodic if and only if for every sequence $(s_n )_n $ of real numbers there exists a subsequence $(\sigma_n )_n \subset (s_n )_n $ and a function $g\in BS^{p} (\mathbb{R},X)  $ such that 
\begin{eqnarray}
\lim_{n} \left( \int_{\left[ t,t+1\right] }\| f(s+\sigma_n )-g(s) \|^{p}ds\right)^{\frac{1}{p}}   =0, \label{charac APSp with sequences}
\end{eqnarray}
uniformly in $t\in \mathbb{R}$.
\end{theorem}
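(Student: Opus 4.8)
The whole statement should follow by transporting Bochner's sequential criterion (Theorem~\ref{Theorem charac AP with sequences}) through the Bochner transform, exploiting that $f^{b}$ is valued in the Banach space $Z:=L^{p}([0,1],X)$ and that, by Remark~\ref{RemCompSpAp}(c), $f$ is $S^{p}$-almost periodic precisely when $f^{b}$ is Bohr almost periodic. The bridge between the two formulations is the change of variable $s=t+u$ in \eqref{charac APSp with sequences}: for every $t\in\mathbb{R}$ and every $\sigma\in\mathbb{R}$,
$$ \left(\int_{[t,t+1]}\|f(s+\sigma)-g(s)\|^{p}\,ds\right)^{1/p}=\|f^{b}(t+\sigma)-g^{b}(t)\|_{Z}. $$
Hence \eqref{charac APSp with sequences} holding uniformly in $t$ is exactly the assertion that $f^{b}(\cdot+\sigma_{n})\to g^{b}$ uniformly on $\mathbb{R}$ in the norm of $Z$.

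For the direct implication I would assume $f$ is $S^{p}$-almost periodic. Then $f^{b}:\mathbb{R}\to Z$ is continuous (continuity of translation in $L^{p}_{loc}$) and, by Remark~\ref{RemCompSpAp}(c), Bohr almost periodic; note also $\sup_{t}\|f^{b}(t)\|_{Z}=\|f\|_{BS^{p}}$ is finite, so $f\in BS^{p}(\mathbb{R},X)$. Given a sequence $(s_{n})_{n}$, Theorem~\ref{Theorem charac AP with sequences} applied to the $Z$-valued function $f^{b}$ furnishes a subsequence $(\sigma_{n})_{n}$ and a continuous $h:\mathbb{R}\to Z$ with $f^{b}(t+\sigma_{n})\to h(t)$ uniformly in $t$. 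By the identity above it then suffices to exhibit $g\in BS^{p}(\mathbb{R},X)$ with $g^{b}=h$, which converts this uniform convergence into \eqref{charac APSp with sequences}.

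The one step that needs care is precisely this identification of the limit $h$ with a Bochner transform, i.e. that the image $\{\,g^{b}:g\in BS^{p}(\mathbb{R},X)\,\}$ is closed under uniform convergence in $BC(\mathbb{R},Z)$. Each $f^{b}(\cdot+\sigma_{n})$ satisfies the compatibility relation $f^{b}(t+r)(u)=f^{b}(t)(u+r)$ for a.e. admissible $u$ and $0\le r\le 1$ (this is just the definition of the Bochner transform), and since the convergence to $h$ is uniform in $Z$ this relation passes to the limit $h$. This compatibility is exactly what allows one to define a single $g\in L^{p}_{loc}(\mathbb{R},X)$, unique up to a null set, by $g(t+u):=h(t)(u)$, and to verify $g^{b}=h$; Stepanov boundedness then follows from $\|g^{b}(t)\|_{Z}=\|h(t)\|_{Z}\le\|f\|_{BS^{p}}<\infty$. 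This supplies the required $g$.

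Finally, the converse is the same correspondence read backwards. Given the sequence condition, fix $(s_{n})_{n}$ and take the subsequence $(\sigma_{n})_{n}$ and $g\in BS^{p}(\mathbb{R},X)$ provided by the hypothesis; by the displayed identity this means $f^{b}(\cdot+\sigma_{n})\to g^{b}$ uniformly in $Z$. As a uniform limit of the continuous maps $f^{b}(\cdot+\sigma_{n})$, the function $g^{b}$ is continuous, so $f^{b}$ meets the hypothesis of Theorem~\ref{Theorem charac AP with sequences}; therefore $f^{b}$ is Bohr almost periodic, and Remark~\ref{RemCompSpAp}(c) yields that $f$ is $S^{p}$-almost periodic. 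The main obstacle throughout is the closedness of the Bochner-transform image under uniform limits; everything else is the bookkeeping carried by the displayed identity.
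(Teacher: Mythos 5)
Your proposal is correct and follows exactly the route the paper indicates: the paper offers no written proof, stating only that the theorem ``can be deduced easily'' from the Bochner transform (Definition \ref{Definition Bochner transform}, Remark \ref{RemCompSpAp}(c)) and Bochner's criterion (Theorem \ref{Theorem charac AP with sequences}), which is precisely your argument. You in fact supply the one detail the paper leaves implicit---that the image of the Bochner transform is closed under uniform limits in $BC(\mathbb{R},L^{p}([0,1],X))$, verified via the compatibility relation $f^{b}(t+r)(u)=f^{b}(t)(u+r)$---and this identification of the limit $h$ with some $g^{b}$, $g\in BS^{p}(\mathbb{R},X)$, is carried out correctly.
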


A sufficient condition for a Stepanov almost periodic function to be Bohr almost periodic is given in the next.

\begin{proposition}\label{PropCovApS}
Let  $f\in L^{p}_{loc} (\mathbb{R},X) $ for $ 1\leq p< \infty $. If $f$ is $S^p$-almost periodic and uniformly continuous, then $f$ is almost periodic. 
\end{proposition}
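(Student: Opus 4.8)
The plan is to show that a uniformly continuous $S^p$-almost periodic function $f$ satisfies Bohr's definition directly, by upgrading the integral (Stepanov) almost periodicity to pointwise uniform almost periodicity. The crucial observation is that uniform continuity allows me to control the pointwise values $\|f(t+\tau)-f(t)\|$ by the local $L^p$-averages $\left(\int_{[t,t+1]}\|f(s+\tau)-f(s)\|^p\,ds\right)^{1/p}$, which are exactly the quantities governed by the $S^p$-almost periodicity hypothesis.

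First I would fix $\varepsilon>0$ and invoke uniform continuity: there exists $\eta\in(0,1)$ such that $\|f(u)-f(v)\|<\varepsilon/2$ whenever $|u-v|<\eta$, for all $u,v\in\mathbb{R}$. The key quantitative step is the following. For any $t\in\mathbb{R}$, write $g_\tau(s):=f(s+\tau)-f(s)$; then for $s$ ranging over an interval of length $\eta$ around $t$, uniform continuity of $f$ forces $g_\tau$ to be $\varepsilon/2$-close to $g_\tau(t)$, so that a small $L^p$-average of $g_\tau$ on that subinterval cannot be achieved unless $\|g_\tau(t)\|=\|f(t+\tau)-f(t)\|$ is itself small. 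Concretely, if $\|f(t+\tau)-f(t)\|\geq\varepsilon$, then for every $s$ with $|s-t|<\eta$ we have $\|g_\tau(s)\|\geq\|g_\tau(t)\|-\|f(s+\tau)-f(t+\tau)\|-\|f(s)-f(t)\|>\varepsilon-\varepsilon/2-\varepsilon/2$ — so I must be a bit more careful and instead choose the contrapositive bound so that smallness of the average propagates to smallness at the point. The clean way is: estimate $\left(\int_{[t,t+\eta]}\|g_\tau(s)\|^p\,ds\right)^{1/p}\geq \eta^{1/p}\bigl(\|g_\tau(t)\|-\varepsilon/2\bigr)$ whenever the right-hand side is nonnegative, so that a sufficiently small Stepanov-type average over $[t,t+1]$ (hence over $[t,t+\eta]$) bounds $\|f(t+\tau)-f(t)\|$ by $\varepsilon$.

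Next I would apply the $S^p$-almost periodicity of $f$ with threshold $\varepsilon':=\tfrac12\,\eta^{1/p}\,\varepsilon$ (or any constant times $\eta^{1/p}\varepsilon$ dictated by the estimate above) to obtain a relatively dense set of $\tau$'s: there is $l_{\varepsilon'}>0$ so that every interval $[a,a+l_{\varepsilon'}]$ contains some $\tau$ with $\left(\int_{[t,t+1]}\|f(s+\tau)-f(s)\|^p\,ds\right)^{1/p}<\varepsilon'$ for all $t$. Since $[t,t+\eta]\subset[t,t+1]$, the same $\tau$ satisfies $\left(\int_{[t,t+\eta]}\|g_\tau(s)\|^p\,ds\right)^{1/p}<\varepsilon'$ for every $t$. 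Combining this with the lower bound from the previous step yields $\|f(t+\tau)-f(t)\|<\varepsilon$ for all $t\in\mathbb{R}$, which is precisely Bohr's condition. Hence $f\in AP(\mathbb{R},X)$, and since the set of admissible $\tau$ is the relatively dense set produced by $S^p$-almost periodicity, $f$ is almost periodic.

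The main obstacle — really a bookkeeping subtlety rather than a deep difficulty — is getting the two-sided comparison between the pointwise value at $t$ and the $L^p$-average right, since the interval $[t,t+\eta]$ lies to one side of $t$ while the continuity modulus controls $|s-t|\leq\eta$ symmetrically; I would simply center the argument on $[t-\eta,t]$ or $[t,t+\eta]$ as needed, noting that uniform continuity gives control on both sides, and track the constant $\eta^{1/p}$ carefully so that the chosen Stepanov threshold $\varepsilon'$ indeed forces $\|f(t+\tau)-f(t)\|<\varepsilon$. No uniformity in $t$ is lost because both the continuity modulus and the Stepanov estimate are uniform in $t$.
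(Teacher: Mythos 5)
Your proof is correct, and in fact the paper states this proposition without any proof at all (it is a classical fact going back to Bohr--Stepanov theory, cf.\ the references \cite{Amerio,Step}), so there is nothing to diverge from: your argument --- comparing the pointwise value $\|f(t+\tau)-f(t)\|$ with the local $L^p$-average over $[t,t+\eta]$ via uniform continuity and Minkowski's inequality, then invoking $S^p$-almost periodicity at the rescaled threshold $\varepsilon'\sim\eta^{1/p}\varepsilon$ --- is the standard one. The only blemish is a constant: with a modulus of continuity giving $\varepsilon/2$ on each of the two terms $\|f(s+\tau)-f(t+\tau)\|$ and $\|f(s)-f(t)\|$, your lower bound should read $\bigl(\int_{[t,t+\eta]}\|g_\tau(s)\|^p\,ds\bigr)^{1/p}\geq \eta^{1/p}\bigl(\|g_\tau(t)\|-\varepsilon\bigr)$ rather than $\eta^{1/p}\bigl(\|g_\tau(t)\|-\varepsilon/2\bigr)$; choosing the modulus at level $\varepsilon/4$ (or simply accepting a final bound of $2\varepsilon$) repairs this, exactly as your own closing remarks anticipate, and uniform continuity also supplies the continuity of $f$ required by Bohr's definition.
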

\begin{definition}
Let $ 1\leq p< \infty $. A  function $ f: \mathbb{R}\times X\longrightarrow Y$ such that $f(\cdot,x)\in L^{p}_{loc} (\mathbb{R},Y) $  for each $ x\in X $ is said to be $ S^{p} $-almost periodic in $ t $ uniformly with respect to $ x $ in $X$ if for each compact set $K$ in $X$, for all $\varepsilon > 0 $  there exists $l_{\varepsilon,K} > 0 $, such that for every $ a \in \mathbb{R}$ there exists $ \tau \in \left[a,a+l_{\varepsilon,K} \right]  $ satisfying: $$ \displaystyle \sup_{x\in K}\left( \int_{\left[ t,t+1\right] }\| f(s+\tau,x)-f(s,x) \|^{p}ds\right)^{\frac{1}{p}}  < \varepsilon \quad \mbox{for all}\; t\in \mathbb{R}.$$ 
The space of all such functions is denoted by $ APS^{p}U(\mathbb{R}\times X,Y) .$ 
\end{definition}
\subsection*{Hypotheses} Here, we list our main hypotheses:\\
\textbf{(H1)} The operators  $A(t) $, $ t \in \mathbb{R} $ satisfy the assumptions \eqref{AquTerCon1} and \eqref{AquTerCon2}. \\
\textbf{(H2)} The evolution family $(U(t,s))_{t\geq s}$ generated by $A(t) $, $ t \in \mathbb{R} $ has an exponential dichotomy on $\mathbb{R}$ with constants $N, \delta > 0$, projections $P(t), t \in \mathbb{R}$, and Green’s function $G$.\\
\textbf{(H3)} For each $x \in X$, $G(\cdot,\cdot)x $ is bi-almost periodic. \\
\textbf{(H4)} There exist $0 \leq \alpha < \beta < 1$ such that $X_{\alpha}^{t} = X_{\alpha}$ and $X_{\beta}^{t} = X_{\beta}$ for every $t\in \mathbb{R}$ with uniformly equivalent norms.\\
\textbf{(H5)} $f$ is locally Lipschitzian with respect to the second argument i.e., for all $\rho> 0$ there exists a nonegative scalar function $L_\rho (\cdot) $ such that 
$$ \| f(t,x)-f(t,y)\| \leq L_\rho (t)\|x-y \|_{\alpha}, \quad x,y\in B(0,\rho), \, t\in \mathbb{R} .$$

\begin{remark}
\textbf{(a)} In the case of an exponentially stable evolution family $(U(t,s))_{t\geq s}$ i.e., the case of Remark \ref{Remark1 Examples Gfun}-\textbf{(c)}. Then, $G(t,s)=U(t,s)$ for $ t\geq s $. Hence, hypothesis \textbf{(H3)} is reduced to the bi-almost periodicity of $(U(t,s))_{t\geq s}$ and this is hold foe example if $ A(\cdot) $ is $p$-periodic. \\ 
\textbf{(b)} In \cite{Man-Sch}, the authors proved that if $R(\omega,A(\cdot))$ is almost periodic for some $\omega \in \mathbb{R}$. Then, the associated Green function is bi-almost periodic. Here, in this paper, we improve this assumption by proving that \textbf{(H3)} holds, if just $ A(\cdot) $ is $ S^{1} $-almost periodic, see Section \ref{Section5}.
 \end{remark}

\section[New composition results]{New composition results of Stepanov almost periodic functions}\label{Section3}
In this section, we prove  a new composition result of $ S^{p} $-almost periodic functions (for $1\leq p<\infty$). We begin by the following useful characterization. 
\begin{lemma}\label{LemApSpCom1Chapter1}
Let $ 1\leq p < +\infty $ and $ f:\mathbb{R}\times Y\longrightarrow X $ be a function such that $ f(\cdot, x) \in L^{p}_{loc}(\mathbb{R}, X) $ for each $ x\in X .$ Then, $ f\in APS^{p}U(\mathbb{R}\times Y,X) $ if and only if the following hold:
\begin{itemize}
\item[\textbf{(i)}] For each $ x\in Y $, $ f(\cdot, x) \in APS^{p}(\mathbb{R},X) .$
\item[\textbf{(ii)}] $ f $ is $ S^{p} $-uniformly continuous with respect to the second argument on each compact subset $K$ in $Y$ in the following sense: for all $ \varepsilon>0 $ there exists $ \delta_{K, \varepsilon} $ such that  
for all $ x_{1}, x_{2} \in K $ one has 
\begin{eqnarray}
\| x_{1}-x_{2}\| \leq \delta_{K, \varepsilon} \Longrightarrow \left( \int_{t}^{t+1}\|f(s,x_{1})-f(s,x_{2}) \|^{p}ds\right) ^{\frac{1}{p}} \leq \varepsilon \quad \text{for all} \; t\in \mathbb{R}  . \label{ComCharSpChapter1}
\end{eqnarray}
\end{itemize}
\end{lemma}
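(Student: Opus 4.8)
The statement is the Stepanov analogue of the classical Bohr characterization of functions almost periodic in $t$ uniformly in a parameter, so the plan is to transport that argument to the $S^{p}$ setting, working directly with the $(\varepsilon,l)$ definition and relying on the sequential characterization of Theorem \ref{Theorem charac APSp with sequences}. Equivalently, by Remark \ref{RemCompSpAp}(c) everything can be phrased through the Bochner transform $g(t,x):=f^{b}(t,x)=f(t+\cdot,x)$, which takes values in the Banach space $Z:=L^{p}([0,1],X)$: membership of $f$ in $APS^{p}U(\mathbb{R}\times Y,X)$ is then exactly the Bohr almost periodicity of $g(\cdot,x)$ in $t$, uniformly for $x$ in compact sets; condition (i) is the Bohr almost periodicity of each $g(\cdot,x)$; and condition (ii) is the equicontinuity in $x$, uniform in $t$, of the family $\{g(t,\cdot)\}_{t\in\mathbb{R}}$ on $K$. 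I treat the two implications separately, the converse being the one actually used in the composition theorem.

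For the converse direction, assume (i) and (ii) and fix a compact $K\subset Y$ and $\varepsilon>0$. First, (ii) furnishes $\delta:=\delta_{K,\varepsilon/3}$; by compactness cover $K$ by finitely many balls $B(x_{1},\delta),\dots,B(x_{m},\delta)$. The key point is to extract a relatively dense set of $\varepsilon/3$-almost periods common to the finitely many functions $f(\cdot,x_{1}),\dots,f(\cdot,x_{m})$. I obtain this by viewing $F(\cdot):=(f(\cdot,x_{1}),\dots,f(\cdot,x_{m}))$ as a function into $X^{m}$ equipped with the max-norm: a diagonal extraction in Theorem \ref{Theorem charac APSp with sequences} applied to the components shows $F\in APS^{p}(\mathbb{R},X^{m})$, and since $\|\cdot\|_{X^{m}}^{p}=\max_{i}\|\cdot\|^{p}$, any $\varepsilon/3$-almost period $\tau$ of $F$ satisfies $\sup_{t}\bigl(\int_{t}^{t+1}\|f(s+\tau,x_{i})-f(s,x_{i})\|^{p}ds\bigr)^{1/p}<\varepsilon/3$ simultaneously for all $i$. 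For arbitrary $x\in K$ choose $x_{i}$ with $\|x-x_{i}\|\le\delta$ and split
\begin{eqnarray*}
\|f^{b}(t+\tau,x)-f^{b}(t,x)\|_{Z} &\le& \|f^{b}(t+\tau,x)-f^{b}(t+\tau,x_{i})\|_{Z}\\
&& +\,\|f^{b}(t+\tau,x_{i})-f^{b}(t,x_{i})\|_{Z}+\|f^{b}(t,x_{i})-f^{b}(t,x)\|_{Z},
\end{eqnarray*}
where the outer two terms are $\le\varepsilon/3$ by (ii) and the middle one is $\le\varepsilon/3$ by the common almost period. Taking the supremum over $t$ and over $x\in K$ gives $\sup_{x\in K}\sup_{t}\|f^{b}(t+\tau,x)-f^{b}(t,x)\|_{Z}\le\varepsilon$; that is, $\tau$ is a common $\varepsilon$-almost period over $K$, and these $\tau$ are relatively dense with gap $l_{\varepsilon,K}$, so $f\in APS^{p}U(\mathbb{R}\times Y,X)$.

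For the direct direction, assume $f\in APS^{p}U(\mathbb{R}\times Y,X)$. Condition (i) is immediate on specializing the definition to the compact singleton $K=\{x\}$. For (ii) I argue through the Bochner transform and relative compactness: the uniform $S^{p}$-almost periodicity of $f$ on $K$ is equivalent, by the parametrized form of Bochner's theorem (Theorem \ref{Theorem charac AP with sequences} applied to $g$ valued in $Z$), to the property that every sequence of translates $\{g(\cdot+s_{n},\cdot)\}$ admits a subsequence converging uniformly on $\mathbb{R}\times K$; in other words the orbit $\{g(\cdot+\tau,\cdot):\tau\in\mathbb{R}\}$ is relatively compact in $(BC(\mathbb{R}\times K,Z),\|\cdot\|_{\infty})$. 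Since a relatively compact subset of a space of bounded continuous functions is uniformly equicontinuous (the necessity half of Arzel\`a--Ascoli), this family is in particular uniformly equicontinuous in the $x$-variable, uniformly in $(t,\tau)$; taking $\tau=0$ yields exactly the estimate (ii). Here the continuity of $f$ in its second argument is what guarantees $g(\cdot+\tau,\cdot)\in BC(\mathbb{R}\times K,Z)$ and thus legitimizes the Arzel\`a--Ascoli step.

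The routine part is the triangle-inequality bookkeeping in the converse direction; the two places that carry the real content are the production of a relatively dense set of \emph{common} almost periods for the finite family $f(\cdot,x_{1}),\dots,f(\cdot,x_{m})$, which I reduce to a single $S^{p}$-almost periodicity in the product space $X^{m}$ via a diagonal extraction, and the equicontinuity step in the direct direction, where uniform almost periodicity must be upgraded to uniform continuity in $x$ by a compactness argument that uses continuity of $f$ in the second variable. I expect this last step to be the main obstacle.
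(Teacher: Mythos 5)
Your converse direction is correct and is essentially the paper's own argument: a finite $\delta_{K,\varepsilon}$-net of $K$, a three-term triangle inequality in which condition (ii) controls the two terms where the spatial point is changed and the almost periodicity of the finitely many $f(\cdot,x_i)$ controls the middle one. In fact you do slightly better than the paper on one point: the paper silently asserts a relatively dense set of almost periods \emph{common} to $f(\cdot,x_1),\dots,f(\cdot,x_n)$, whereas you justify this by packaging the finite family into a single $X^m$-valued function with the max norm and applying the sequential characterization (Theorem \ref{Theorem charac APSp with sequences}) with a diagonal extraction; since $\|(v_1,\dots,v_m)\|^p=\max_i\|v_i\|^p$ under that norm, an $S^p$-almost period of the vector function is simultaneously one for every component. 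That is a correct and standard repair of a step the paper leaves implicit.

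The forward direction is where your proposal genuinely diverges from the paper and where it has a gap. The paper disposes of this half in one stroke by applying \cite[Lemma 2.6]{Cieutat} to the Bochner transform $f^{b}:\mathbb{R}\times Y\to L^{p}([0,1],X)$, reducing everything to the known Bohr-case characterization; you instead attempt a self-contained compactness proof, and the key step as you state it is false: a relatively compact subset of $BC(\mathbb{R}\times K,Z)$ need \emph{not} be uniformly equicontinuous, because the domain $\mathbb{R}\times K$ is not compact (the singleton $\{g\}$ with $g(t,x)=\sin(tx)$, $K=[0,1]$, is compact in sup norm but fails equicontinuity in $x$ uniformly in $t$; Arzel\`a--Ascoli necessity requires a compact domain). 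The argument can be repaired, but not by quoting Arzel\`a--Ascoli on $\mathbb{R}\times K$: from total boundedness of the orbit, evaluating the $\varepsilon$-net at the translate $\tau=t$ shows that $\{f^{b}(t,\cdot):t\in\mathbb{R}\}$ is totally bounded in $C(K,Z)$, and each member of a finite $\varepsilon$-net is uniformly continuous on the \emph{compact} set $K$, which yields (ii). Two further points deserve flags. First, your ``parametrized Bochner theorem'' (Bohr-type uniform almost periodicity implies precompactness of translates in $BC(\mathbb{R}\times K,Z)$) already presupposes $f^{b}(t,\cdot)\in C(K,Z)$, i.e.\ continuity of the Bochner transform in $x$; this does not follow from pointwise continuity of $f(t,\cdot)$ without a domination argument, and it certainly does not follow from the lemma's stated hypotheses. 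Indeed, with only the stated hypotheses the implication $f\in APS^{p}U\Rightarrow$ (ii) is false: take $f(t,x)=h(x)$ with $h$ bounded and discontinuous, which belongs to $APS^{p}U$ trivially (every $\tau$ is an almost period) yet violates (ii). You correctly identify continuity in the second variable as the crux, and the paper's proof inherits the same unstated assumption through Cieutat's lemma, so you are no worse off on that score; but as written your Arzel\`a--Ascoli step does fail and needs the fix above.
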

%
\begin{proof}
Let $ f\in APS^{p}U(\mathbb{R}\times X,Y) $ and $ f^{b}:\mathbb{R}\times Y\longrightarrow L^{p}([0,1],X)  $ be the Bochner transform associated to $f$. It follows in view of  \cite[Lemma 2.6]{Cieutat},  that \textbf{(i)} is clearly satisfied and for each compact subset $K$ in $X,$ for all $ \varepsilon>0 $ there exists $ \delta_{K, \varepsilon} $ such that  
for all $ x_{1}, x_{2} \in K $ one has 
\begin{eqnarray*}
\| x_{1}-x_{2}\| \leq \delta_{K, \varepsilon} &\Longrightarrow & \| f^{b}(t,x_{1})-f^{b}(t,x_{2}) \| \leq \varepsilon \quad \text{for all} \; t\in \mathbb{R} .
\end{eqnarray*}
Since,
\begin{eqnarray*}
  \| f^{b}(t,x_{1})-f^{b}(t,x_{2}) \| &= &
 \left( \int_{[0,1]}\|(f^{b}(t,x_{1}))(s)-(f^{b}(t,x_{2}))(s) \|^{p}ds\right) ^{\frac{1}{p}}  \\ &=& 
 \left( \int_{t}^{t+1}\|f(s,x_{1})-f(s,x_{2}) \|^{p}ds\right) ^{\frac{1}{p}} \quad \text{for all} \; t\in \mathbb{R}.\\ 
\end{eqnarray*}
It follows that \eqref{ComCharSpChapter1} holds and then \textbf{(ii)} is achieved.\\ 
Conversely, let $ f:\mathbb{R}\times X\longrightarrow Y $ be a function such that $ f(\cdot, x) \in L^{p}_{loc}(\mathbb{R}, X) $ for each $ x\in Y . $ Assume that $ f $ satisfies \textbf{(i)-(ii)}. Let us fix a compact subset $ K $ in $Y$ and $ \varepsilon>0 .$ Since $ K $ is compact, it follows that there exists a finite subset $ \lbrace x_{1},...,x_{n} \rbrace  \subset K$ ($ n\in \mathbb{N}^{*} $) such that $ \displaystyle{K\subseteq \bigcup_{i=1}^{n} B(x_{i}, \delta_{K, \varepsilon} )} .$ Therefore, for $ x\in K $, there exist $ i=1,...,n $ satisfying $ \|x-x_{i}\|\leq \delta_{K, \varepsilon} $. Let $\tau \in \mathbb{R}$. Then, we obtain that
\begin{eqnarray}
 & &\left( \int_{t}^{t+1}\| f(s+\tau  ,x)-f(s,x) \|^{p}ds\right) ^{\frac{1}{p}} \leq   \left( \int_{t}^{t+1}\| f(s+\tau  ,x)-f(s+\tau , x_{i}) \|^{p}ds\right) ^{\frac{1}{p}} \nonumber \\ & & + \left( \int_{t}^{t+1}\| f(s+\tau  ,x_{i})-f(s,x_{i}) \|_{Y}^{p}ds\right) ^{\frac{1}{p}} \nonumber \\ & & +\left( \int_{t}^{t+1}\| f(s ,x_{i})-f(s,x) \|^{p}ds\right) ^{\frac{1}{p}},\; t\in \mathbb{R}. 
 \label{ComResFor1Chapter1}
\end{eqnarray}
Using \textbf{(i)}, we have for each $i=1,\dots,n $, $f(\cdot,x_{i}) \in AAS^{p}(\mathbb{R},X) $. Hence, there exists $l_{K, \varepsilon}>0$ such that for all $ a\in \mathbb{R} $ there exists $ \tau \in [a,a+l_{K, \varepsilon}] $ satisfying
\begin{eqnarray}
 \left( \int_{t}^{t+1}\| f(s+\tau ,x_{i})-f(s,x_{i}) \|^{p}ds\right) ^{\frac{1}{p}} \leq \frac{\varepsilon}{3} \quad \text{for all} \; t\in \mathbb{R}. \label{ComResFor2Chapter1}
\end{eqnarray}  
Since $\| x-x_{i}\| \leq \delta_{K,\delta} $ and  by \textbf{(ii)}, we claim that 
\begin{eqnarray}
 \left( \int_{t}^{t+1}\| f(s+\tau  ,x)-f(s+\tau , x_{i}) \|^{p}ds\right) ^{\frac{1}{p}} \leq \frac{\varepsilon}{3} \label{ComResFor3Chapter1} \quad \text{for all} \; t\in \mathbb{R},
\end{eqnarray}
and 
\begin{eqnarray}
 \left( \int_{t}^{t+1}\| f(s ,x)-f(s, x_{i}) \|^{p}ds\right) ^{\frac{1}{p}} \leq \frac{\varepsilon}{3} \quad \text{for all} \; t\in \mathbb{R}. \label{ComResFor4Chapter1}
\end{eqnarray}
Consequently, we replace \eqref{ComResFor2Chapter1}, \eqref{ComResFor3Chapter1} and \eqref{ComResFor4Chapter1} in \eqref{ComResFor1Chapter1},  we obtain that
\begin{eqnarray*}
\sup_{x\in K}\left( \int_{t}^{t+1}\| f(s+\tau ,x)-f(s,x) \|^{p}ds\right) ^{\frac{1}{p}} \leq \varepsilon  \quad \text{for all} \; t\in \mathbb{R} . 
\end{eqnarray*}
\end{proof}
By Lemma \ref{LemApSpCom1Chapter1}, we deduce the following composition result.
\begin{theorem}\label{ThmComApSpChapter1}
Let $ 1\leq p < +\infty $ and $ f\in APS^{p}U(\mathbb{R}\times Y,X) $. Assume that $x \in AP (\mathbb{R},Y)$. Then, $ f(\cdot,x(\cdot)) \in APS^{p}(\mathbb{R}, X).$ 
\end{theorem}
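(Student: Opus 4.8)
The plan is to deduce the statement from the characterization in Lemma~\ref{LemApSpCom1Chapter1} together with the elementary fact that a Bohr almost periodic function has totally bounded range. First I would record that, since $x\in AP(\mathbb{R},Y)$, the set $K:=\overline{\{x(t):t\in\mathbb{R}\}}$ is a \emph{compact} subset of $Y$; this is exactly what allows the uniform hypotheses carried by $f\in APS^{p}U(\mathbb{R}\times Y,X)$ to be activated along the path $x(\cdot)$. Fixing $\varepsilon>0$, I would invoke Lemma~\ref{LemApSpCom1Chapter1}\textbf{(ii)} to get a modulus $\delta=\delta_{K,\varepsilon}>0$ controlling the $S^{p}$-oscillation of $f$ in its second slot over $K$, cover $K$ by finitely many balls $B(y_{1},\delta),\dots,B(y_{n},\delta)$ with $y_{i}\in K$, and build a step function $\bar{x}$ out of the net $\{y_{i}\}$ with $\|x(s)-\bar{x}(s)\|\le\delta$ for all $s$.

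Next I would produce a relatively dense set of common almost periods. By Lemma~\ref{LemApSpCom1Chapter1}\textbf{(i)} each $f(\cdot,y_{i})$ is $S^{p}$-almost periodic, so finitely many of them admit a relatively dense set of common $\varepsilon'$-almost periods (equivalently, $\tau$ may be drawn directly from the $APS^{p}U$ property of $f$ applied to the compact $K$); simultaneously, since $x$ is Bohr almost periodic, $\tau$ can be taken to be a $\delta$-almost period of $x$, so $\|x(s+\tau)-x(s)\|\le\delta$ for every $s$. Intersecting the two inclusion lengths yields a relatively dense set of admissible $\tau$. With such a $\tau$ fixed I would write
\begin{eqnarray*}
f(s+\tau,x(s+\tau))-f(s,x(s)) &=& \big[f(s+\tau,x(s+\tau))-f(s+\tau,\bar{x}(s+\tau))\big] \\
&& +\,\big[f(s+\tau,\bar{x}(s+\tau))-f(s,\bar{x}(s))\big] \\
&& +\,\big[f(s,\bar{x}(s))-f(s,x(s))\big].
\end{eqnarray*}
The middle bracket is the benign one: inserting and removing $f(s+\tau,\bar{x}(s))$ splits it into a genuine time shift $f(s+\tau,\bar{x}(s))-f(s,\bar{x}(s))$, handled cell-by-cell through the common almost periods of the finitely many $f(\cdot,y_{i})$ (at the cost of a harmless factor $n^{1/p}$ that is absorbed by shrinking $\varepsilon'$), plus a comparison of $f(s+\tau,\cdot)$ at two nearby net points, controlled by \textbf{(ii)}.

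I expect the main obstacle to be the two outer brackets, each of the form
$$ \Big(\int_{t}^{t+1}\big\|f(s,x(s))-f(s,\bar{x}(s))\big\|^{p}\,ds\Big)^{1/p}, $$
i.e. the error incurred by replacing the continuous path $x(\cdot)$ by its finite approximation $\bar{x}(\cdot)$ \emph{inside} the Stepanov integral. This is precisely where assuming only continuity of $f$ in the second variable, rather than a uniform Lipschitz bound, makes the argument delicate: Lemma~\ref{LemApSpCom1Chapter1}\textbf{(ii)} controls $\int_{t}^{t+1}\|f(s,y_{1})-f(s,y_{2})\|^{p}\,ds$ only for \emph{fixed} endpoints $y_{1},y_{2}$, whereas here the second argument $x(s)$ varies with $s$ within a $\delta$-ball and the supremum over that ball cannot in general be pulled out of the integral (indeed $\int\sup\ge\sup\int$ is the wrong direction). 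Under a uniform Lipschitz hypothesis this term is trivially $\le L\delta$, which is exactly why the references \cite{AkdEss,Moi,Ding,Li} impose it. To close the gap with continuity alone, I would try to exploit the Bohr regularity of $x$ in an essential way -- its uniform continuity and the total boundedness of its range -- passing to a refined \emph{measurable} partition of $[t,t+1]$ subordinate to the net and estimating the path-approximation error on each piece via \textbf{(ii)}; the sequential characterization of Theorem~\ref{Theorem charac APSp with sequences} could be used in parallel, though it runs into the same interchange-of-$\sup$-and-integral difficulty. Once the path-approximation term is made $S^{p}$-small uniformly in $t$, collecting the three brackets gives the bound $\varepsilon$, and Definition~\ref{Definition APSp} yields $f(\cdot,x(\cdot))\in APS^{p}(\mathbb{R},X)$.
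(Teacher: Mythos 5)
Your proposal follows the same route as the paper's own proof: compactness of $K=\overline{\{x(t):t\in\mathbb{R}\}}$, the finite $\delta$-net supplied by Lemma \ref{LemApSpCom1Chapter1}\textbf{(ii)}, common almost periods for $x$ and the finitely many functions $f(\cdot,y_{i})$ from \textbf{(i)}, and the same shift-plus-approximation decomposition. Your handling of the middle bracket is sound: since $\bar{x}(s)$ and $\bar{x}(s+\tau)$ take only finitely many values, $[t,t+1]$ splits into measurable cells on which the second argument is constant, so \eqref{ComCharSpChapter1} and the $S^{p}$-almost periods are applied to \emph{fixed} pairs, and the factor $n^{1/p}$ is absorbed by shrinking the tolerance after the net (hence $n$) has been fixed — no circularity.

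However, the obstacle you flag in the outer brackets is genuine, and your proposal does not close it. The condition \eqref{ComCharSpChapter1} bounds $\int_{t}^{t+1}\|f(s,y_{1})-f(s,y_{2})\|^{p}\,ds$ only for fixed $y_{1},y_{2}$, and this modulus does not improve on subintervals (it is not proportional to their length); hence your suggested refinement of the partition of $[t,t+1]$ via the uniform continuity of $x$ only multiplies the number of pieces without shrinking the per-piece bound, and the term $\int_{t}^{t+1}\|f(s,x(s))-f(s,\bar{x}(s))\|^{p}\,ds$ remains uncontrolled. What would suffice is a pointwise-in-$s$ modulus, e.g. $\|f(s,u)-f(s,v)\|\leq L(s)\,\omega(\|u-v\|)$ with $L\in BS^{p}$ — essentially hypothesis \textbf{(H5)} — but that is exactly the Lipschitz-type assumption the theorem is advertised to avoid. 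You should also know that the paper's proof takes your route and silently performs the very step you identify as invalid: the bounds \eqref{ComAPSpFor1Chapter1}, \eqref{ComAPSpFor2Chapter1} and \eqref{ComAPSpFor3Chapter1} apply Lemma \ref{LemApSpCom1Chapter1}\textbf{(ii)} to integrands whose second argument $x(s)$ varies with $s$; moreover \eqref{ComAPSpFor2Chapter1} compares $x(s)$, $s\in[t,t+1]$, with a net point $x_{i(t)}$ chosen only so that $\|x(t)-x_{i(t)}\|\leq\delta_{K,\varepsilon}$ at the single time $t$. So your proposal is incomplete at precisely the point where the paper's argument is also unjustified: your diagnosis is correct, and repairing the proof requires strengthening \textbf{(ii)} to a pointwise or measurable-selection form, not a cleverer partition.
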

\begin{proof}
Let $t, \tau \in \mathbb{R}$. Then, we have
\begin{eqnarray*}
\hspace{-1cm}& &\left( \int_{t}^{t+1}\| f(s+\tau ,x(s+\tau))-f(s,x(s)) \|^{p}ds\right) ^{\frac{1}{p}} \nonumber \\ & \leq & \left( \int_{t}^{t+1}\| f(s+\tau ,x(s+\tau))-f(s+\tau , x(s)) \|^{p}ds\right)   ^{\frac{1}{p}} \\ & & + \left( \int_{t}^{t+1}\| f(s+\tau ,x(s))-f(s, x(s)) \|^{p}ds\right)^{\frac{1}{p}}.   
\end{eqnarray*}
Moreover, given $ K:=\overline{\lbrace x(t):\ t\in \mathbb{R} \rbrace} $ a compact subset of $Y$ and $ \varepsilon >0 $ be fixed. Using Lemma \ref{LemApSpCom1Chapter1}\textbf{-(ii)} it follows that there exists $ \delta_{\varepsilon, K} > 0$ such that \eqref{ComCharSpChapter1} holds. 
Let $ \varepsilon>0 $, since $ u \in AP(\mathbb{R},Y) $, it follows that, there exists $ l_{\varepsilon}>0 $ such that  every interval of length $  l_{\varepsilon} $ contains an element $ \tau $ such that  $$\| x(s+\tau )- x(s)\| \leq \delta_{\varepsilon, K} \; \text{ for all } s\in \mathbb{R}.$$  Moreover, for each $ s\in \mathbb{R}  $, we have $ x(s) \in K $. Hence, 
\begin{eqnarray}
 \left( \int_{t}^{t+1}\| f(s+\tau ,x(s+\tau))-f(s+\tau , x(s)) \|^{p}ds\right)   ^{\frac{1}{p}}  \leq \frac{\varepsilon}{4 } \label{ComAPSpFor1Chapter1}
 \end{eqnarray}
Furthermore, since $ K $ is compact, it follows that, there exists a finite subset $ \lbrace x_{1},...,x_{n} \rbrace  \subset K$ ($ n\in \mathbb{N}^{*} $) such that $ \displaystyle{K\subseteq \bigcup_{i=1}^{n} B(x_{i}, \delta_{K, \varepsilon} )}.$ Then, for all $ t\in \mathbb{R} $ there exists $ i(t)=1,...,n $ such that $ \|x(t)-x_{i(t)} \| \leq \delta_{K,\varepsilon} .$ Thus
\begin{eqnarray}
 \hspace{-0.5cm}\left( \int_{t}^{t+1}\| f(s+\tau ,x(s))-f(s+\tau , x_{i(t)})) \|^{p}ds\right)^{\frac{1}{p}} \leq \frac{\varepsilon}{4}, \label{ComAPSpFor2Chapter1} 
\end{eqnarray}
and 
\begin{eqnarray}
 \left( \int_{t}^{t+1}\| f(s ,x(s))-f(s, x_{i(t)})) \|^{p}ds\right)^{\frac{1}{p}} \leq \frac{\varepsilon}{4} . \label{ComAPSpFor3Chapter1}
\end{eqnarray}
Using Lemma \ref{LemApSpCom1Chapter1}\textbf{-(i)}, we get that
\begin{eqnarray}
 \left( \int_{t}^{t+1}\| f(s+\tau ,x_{i(t)})-f(s, x_{i(t)})) \|^{p}ds\right)^{\frac{1}{p}} \leq \frac{\varepsilon}{4} . \label{ComAPSpFor4Chapter1}
\end{eqnarray}
Consequently, by  \eqref{ComAPSpFor1Chapter1}, \eqref{ComAPSpFor2Chapter1}, \eqref{ComAPSpFor3Chapter1} and \eqref{ComAPSpFor4Chapter1}, we obtain that
\begin{eqnarray*}
\left( \int_{t}^{t+1}\| f(s+\tau ,x(s+\tau))-f(s,x(s)) \|^{p}ds\right) ^{\frac{1}{p}} \leq \frac{\varepsilon}{4 } + \frac{\varepsilon}{4} + \frac{\varepsilon}{4}+ \frac{\varepsilon}{4} = \varepsilon \quad \text{for all}\; t\in \mathbb{R}.
\end{eqnarray*}
This proves the result.
\end{proof}
\section{Almost periodic solutions to semilinear evolution equations}\label{Section4}
In this section, we prove the existence and uniqueness of almost periodic solutions for our semilinear evolution equation \eqref{Eq_1}. For that purpose, we set the following associated inhomogeneous linear equation:  
\begin{eqnarray}
  u'(t)= A(t)u(t) + h(t) \quad \text{for}\; t \in \mathbb{R} \label{EqLinHomo}
\end{eqnarray}
where $ h:\mathbb{R}\longrightarrow X $ is locally integrable. A mild solution of equation  \eqref{EqLinHomo} is the continuous function $ u: \mathbb{R} \longrightarrow X^{t}_{\alpha} $  which satisfies the following variation of constants formula:
\begin{eqnarray}
u(t)=U(t,\sigma)u(\sigma)+ \int_{\sigma}^{t} U(t,s)h(s)ds \qquad \mbox{for all} \; t\geq \sigma. \label{MildSolLinHom}
\end{eqnarray}
Similarly, we define mild solution of equation \eqref{Eq_1} by taking $f(\cdot, u(\cdot))=h(\cdot)$.\\

 Now, we prove the existence and uniqueness of almost periodic mild solutions to \eqref{EqLinHomo} under the assumption that $ h $ is just $ S^{p} $-almost periodic ($ 1 \leq p < \infty  $). For technical reasons, we distinguish two cases, $p=1$ and $1<p<\infty$.  
 
\begin{theorem}\label{ThmExiBC1}
 Let $ h \in L^{\infty}(\mathbb{R},X) $ and assume that \textbf{(H1)}-\textbf{(H2)} and \textbf{(H4)} are satisfied. Then, equation \eqref{EqLinHomo} has a unique bounded mild solution $ u: \mathbb{R}\longrightarrow X_{\alpha} $ given by
\begin{eqnarray}
u(t)=\int_{\mathbb{R}}G (t,s)h(s)ds  \quad \text{for all}\;  t\in  \mathbb{R}.\label{MildSolAp}
\end{eqnarray}
\end{theorem}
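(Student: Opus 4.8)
The plan is to verify the three statements carried by the formula \eqref{MildSolAp} separately: that the integral defines a bounded continuous map into the fixed space $X_\alpha$, that this map satisfies the variation of constants formula \eqref{MildSolLinHom} (hence is a mild solution), and that no other bounded mild solution exists. Using the Green function \eqref{GreenFun}, I first rewrite
\[
u(t)=\int_{-\infty}^{t} U(t,s)P(s)h(s)\,ds-\int_{t}^{+\infty}\tilde{U}(t,s)Q(s)h(s)\,ds .
\]
For the stable integral I would apply the estimate \eqref{prop2.3 estim2} of Theorem \ref{prop2.3} together with $\|h(s)\|\leq \|h\|_{\infty}$ and the substitution $\tau=t-s$, so that the $X_\alpha$-norm is dominated by $m(\alpha)\|h\|_{\infty}\int_{0}^{\infty}\tau^{\alpha-1}e^{-\gamma\tau}\,d\tau=m(\alpha)\|h\|_{\infty}\,\Gamma(\alpha)\gamma^{-\alpha}$, finite precisely because $\alpha>0$ (in the degenerate case $\alpha=0$ one instead uses the plain exponential bound from \textbf{(H2)}). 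For the unstable integral the estimate \eqref{prop2.3 estim1} yields the majorant $c(\alpha)\|h\|_{\infty}\int_{t}^{\infty}e^{-\delta(s-t)}\,ds=c(\alpha)\|h\|_{\infty}\delta^{-1}$. Both bounds are independent of $t$, giving absolute convergence in the $X_\alpha$-norm and boundedness of $u$; here \textbf{(H4)} is exactly what identifies all the spaces $X_\alpha^{t}$ with the single space $X_\alpha$, so that $u$ is genuinely a map $\mathbb{R}\to X_\alpha$.

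Continuity of $u$ in $X_\alpha$ I would then deduce from the strong continuity in $t$ of $U(t,s)$, $\tilde U(t,s)$ and of the projections $P(s),Q(s)$ away from the diagonal, combined with the dominated convergence theorem applied with the integrable majorants obtained above.

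The central step is to show that $u$ solves \eqref{EqLinHomo} in the mild sense. I fix $t\geq\sigma$, insert the formula for $u(\sigma)$ into $U(t,\sigma)u(\sigma)$, and invoke the evolution law $U(t,\sigma)U(\sigma,s)=U(t,s)$, the intertwining relation $U(t,\sigma)P(\sigma)=P(t)U(t,\sigma)$, and the group property of the flow on the unstable bundle, which gives $U(t,\sigma)\tilde U(\sigma,s)Q(s)=\tilde U(t,s)Q(s)$ (read as $U(t,s)Q(s)$ when $s\leq t$). The stable part becomes $\int_{-\infty}^{\sigma}U(t,s)P(s)h(s)\,ds$, while the unstable part must be split at $t$: the piece over $[\sigma,t]$ is $-\int_{\sigma}^{t}U(t,s)Q(s)h(s)\,ds$, and adding the extra term $\int_{\sigma}^{t}U(t,s)h(s)\,ds$ from \eqref{MildSolLinHom} and using $I-Q(s)=P(s)$ converts it into $\int_{\sigma}^{t}U(t,s)P(s)h(s)\,ds$; together with the piece over $[t,+\infty)$ this reassembles exactly $u(t)$. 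This subspace bookkeeping and the correct splitting of the $Q$-integral is where I expect the only genuine difficulty to lie.

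Finally, for uniqueness I let $w$ be the difference of two bounded mild solutions, so that $w(t)=U(t,\sigma)w(\sigma)$ for all $t\geq\sigma$. Applying $Q(t)$ and inverting on the unstable subspace gives, via condition (iii) of the dichotomy definition, $\|Q(\sigma)w(\sigma)\|=\|\tilde U(\sigma,t)Q(t)w(t)\|\leq N e^{-\delta(t-\sigma)}\|w\|_{\infty}$, so letting $t\to+\infty$ forces $Q(\cdot)w\equiv 0$. Applying $P(t)$ gives $\|P(t)w(t)\|=\|U(t,\sigma)P(\sigma)w(\sigma)\|\leq N e^{-\delta(t-\sigma)}\|w\|_{\infty}$, so letting $\sigma\to-\infty$ forces $P(\cdot)w\equiv 0$. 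Hence $w\equiv 0$, and the bounded mild solution is unique.
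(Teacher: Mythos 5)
Your proposal is correct, and its core coincides with the paper's proof: the same splitting of $\int_{\mathbb{R}}G(t,s)h(s)\,ds$ via \eqref{GreenFun}, the same dichotomy estimates of Theorem \ref{prop2.3} for boundedness in $X_{\alpha}$ (with \textbf{(H4)} fixing the target space), and the same bookkeeping with the evolution law, the intertwining relation $U(t,\sigma)P(\sigma)=P(t)U(t,\sigma)$, and the splitting of the unstable integral at $t$ to verify \eqref{MildSolLinHom}. The one place you genuinely diverge is uniqueness. The paper takes an arbitrary bounded mild solution $u$, projects the variation-of-constants formula by $P(t)$ and $Q(t)$, and lets $\sigma\to-\infty$ (resp.\ $\sigma\to+\infty$) with dominated convergence to identify $P(t)u(t)=\int_{-\infty}^{t}U(t,s)P(s)h(s)\,ds$ and $Q(t)u(t)=-\int_{t}^{+\infty}\tilde{U}(t,s)Q(s)h(s)\,ds$, so that uniqueness of the decomposition $u=Pu+Qu$ forces $u$ to equal \eqref{MildSolAp}. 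You instead pass to the difference $w$ of two bounded solutions, which satisfies the homogeneous relation $w(t)=U(t,\sigma)w(\sigma)$, and annihilate $Q(\cdot)w$ by inverting on the unstable bundle and letting $t\to+\infty$, then $P(\cdot)w$ by letting $\sigma\to-\infty$. Your variant is more elementary (only the pointwise dichotomy bounds of the definition are used, no limit passage under the integral), while the paper's argument buys slightly more: it shows that \emph{every} bounded mild solution is represented by the integral formula, not merely that two bounded solutions agree; combined with your existence step the two conclusions are of course equivalent. A cosmetic remark: your bound $m(\alpha)\Gamma(\alpha)\gamma^{-\alpha}\|h\|_{\infty}$ for the stable integral uses the kernel $(t-s)^{\alpha-1}$ exactly as stated in \eqref{prop2.3 estim2}, whereas the paper's proof writes $(t-s)^{-\alpha}$ and obtains $m(\alpha)\gamma^{\alpha-1}\Gamma(1-\alpha)\|h\|_{\infty}$; both kernels are integrable for $0<\alpha<1$, so nothing is at stake, but you are the one consistent with Theorem \ref{prop2.3} as stated.
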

\begin{proof}
Let  $ h \in L^{\infty}(\mathbb{R},X) $.  First, we show that the integral formula in \eqref{MildSolAp} is well defined and yields a continuous bounded  function in $X_{\alpha}$.  Indeed, using the estimates \eqref{prop2.3 estim1} and \eqref{prop2.3 estim2}, one has
\begin{eqnarray*}
\| \int_{\mathbb{R}}G (t,s)h(s)ds \|_{\alpha} 
&=& \| \int_{-\infty}^{t}U (t,s)P(s)h(s)ds -\int^{+\infty}_{t} U (t,s)Q(s)h(s)ds \|_{\alpha} \\
&\leq & \int_{-\infty}^{t} \| U (t,s)P(s)h(s) \|_{\alpha} ds + \int^{+\infty}_{t} \| U (t,s)Q(s)h(s) \|_{\alpha} ds \\
&\leq & m(\alpha ) \int_{-\infty}^{t} (t-s)^{- \alpha} e^{- \gamma (t-s)}  \| h(s) \| ds  \\ &+&  c(\alpha) \int^{+\infty}_{t} e^{- \delta (s-t)}  \| h(s) \|  ds \\
&\leq  &\left(   m(\alpha )\gamma^{\alpha -1} \Gamma(1- \alpha ) +  c(\alpha)\delta^{-1}\right)    \| h \|_{\infty}, \; t\in \mathbb{R}.  
\end{eqnarray*}
Thus $ v:= \displaystyle{ \int_{\mathbb{R}}G (\cdot,s)h(s)ds }$ defines a bounded function from $\mathbb{R}$ to $ X_\alpha  $ and it is clear that it is also continuous. Now, we show that $v$ is the unique mild solution of equation \eqref{EqLinHomo}. Let $ \sigma \in \mathbb{R} $ be fixed and $ t\geq \sigma $. Then, by the property of evolution families, we claim that
\begin{eqnarray*}
v(t)&=& \int_{-\infty}^{t}U (t,s)P(s)h(s)ds -\int^{+\infty}_{t} \tilde{U} (t,s)Q(s)h(s)ds  \\
&=& \int_{-\infty}^{\sigma}U (t,s)P(s)h(s)ds+ \int_{\sigma}^{t}U (t,s)P(s)h(s)ds \\ 
&&-\int_{t}^{\sigma}  \tilde{U} (t,s)Q(s)h(s)ds -\int^{+\infty}_{\sigma} \tilde{U} (t,s)Q(s)h(s)ds \\
&=& U(t,\sigma)v(\sigma) +  \int_{\sigma}^{t}U (t,s)h(s)ds
\end{eqnarray*}
where $ P(t) $ and $Q(t)$, $t\in \mathbb{R}$ are the  corresponding projections of the exponential dichotomy.
Conversely, let $u$ be the bounded mild solution of equation \eqref{EqLinHomo} defined by 
$$ u(t)=U(t,\sigma)u(\sigma)+ \int_{\sigma}^{t} U(t,s)h(s)ds .$$
Then, 
$$ P(t)u(t)= U(t,\sigma)P(\sigma) u(\sigma) + \int_{\sigma}^{t} U(t,s)P(s)h(s)ds $$  and $$ Q(t)u(t)= U(t,\sigma)Q(\sigma) u(\sigma) + \int_{\sigma}^{t} \tilde{U}(t,s)Q(s)h(s)ds $$
Hence, by the boundedness of $ u $ and in view of the estimates \eqref{prop2.3 estim1} and \eqref{prop2.3 estim2}, it follows using the dominated convergence Theorem, by letting $ \sigma \rightarrow - \infty $ and $ \sigma \rightarrow - \infty $ respectively, that  
$$ P(t)u(t)= \int_{-\infty}^{t} U(t,s)P(s)h(s)ds $$  and $$ Q(t)u(t)=- \int_{t}^{+\infty} U(t,s)Q(s)h(s)ds $$
Since the decomposition $ u(t)=P(t)u(t)+Q(t)u(t)$ is unique, we obtain that $u$ is uniquely determined by the integral formula given by $v$. 
\end{proof}
For $1<p<\infty$, we dot not need to assume the boundedness of $ h $, just to be $ S^p $-bounded is sufficient. Indeed, we have the following main result. 
\begin{theorem}
Let $ h \in BS^{p}(\mathbb{R},X) $ and assume that \textbf{(H1)}-\textbf{(H2)} and \textbf{(H4)} hold. Then, equation \eqref{EqLinHomo} has a unique bounded mild solution $ u: \mathbb{R}\longrightarrow X_{\alpha} $ defined by the integral formula  \eqref{MildSolAp}.
\end{theorem}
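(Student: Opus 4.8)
The plan is to follow the proof of Theorem~\ref{ThmExiBC1} line by line, isolating the one and only place where the boundedness hypothesis $h\in L^\infty(\mathbb{R},X)$ was actually used. That hypothesis entered \emph{only} in the a priori estimate showing that $v(t):=\int_{\mathbb{R}}G(t,s)h(s)\,ds$ is a well-defined bounded element of $X_\alpha$; the subsequent verification that $v$ satisfies the variation of constants formula \eqref{MildSolLinHom}, and the uniqueness argument (splitting $u=P(\cdot)u+Q(\cdot)u$ and letting $\sigma\to\pm\infty$ through dominated convergence), are algebraic identities for the evolution family and the Green function that are insensitive to the integrability class of $h$. I would therefore reuse those parts verbatim and concentrate all the work on replacing the estimate of $\|v(t)\|_\alpha$.

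As in Theorem~\ref{ThmExiBC1}, estimates \eqref{prop2.3 estim1} and \eqref{prop2.3 estim2} give
\begin{align*}
\|v(t)\|_\alpha \leq m(\alpha)\int_{-\infty}^{t}(t-s)^{-\alpha}e^{-\gamma(t-s)}\|h(s)\|\,ds+c(\alpha)\int_{t}^{+\infty}e^{-\delta(s-t)}\|h(s)\|\,ds.
\end{align*}
Since $h$ is now only $S^p$-bounded I cannot factor out $\|h\|_\infty$; instead I would discretise each integral over unit intervals and apply H\"older's inequality with the conjugate exponent $q$. For the stable part, writing $(-\infty,t]=\bigcup_{n\geq 0}[t-n-1,t-n]$ and substituting $r=t-s$,
\begin{align*}
\int_{t-n-1}^{t-n}(t-s)^{-\alpha}e^{-\gamma(t-s)}\|h(s)\|\,ds\leq\left(\int_{n}^{n+1}r^{-\alpha q}e^{-\gamma q r}\,dr\right)^{1/q}\left(\int_{t-n-1}^{t-n}\|h(s)\|^{p}\,ds\right)^{1/p}.
\end{align*}
The last factor is at most $\|h\|_{BS^p}$ for every $n$ and every $t$, by the definition of the Stepanov norm, so summing over $n$ bounds the whole integral by $\|h\|_{BS^p}\sum_{n\geq 0}\bigl(\int_n^{n+1}r^{-\alpha q}e^{-\gamma q r}\,dr\bigr)^{1/q}$, uniformly in $t$. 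The unstable part is handled identically with the simpler, singularity-free kernel $e^{-\delta(s-t)}$.

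It then remains to see that these series converge. For $n\geq 1$ the exponential factor dominates (there $r^{-\alpha q}\leq 1$), so the tails decay geometrically like $e^{-\gamma n}$ and $e^{-\delta n}$ respectively. The only delicate term is $n=0$ of the stable series, where the kernel carries the diagonal singularity $r^{-\alpha q}$, integrable on $[0,1]$ precisely when $\alpha q<1$. I expect this to be the main obstacle, and it is exactly here that the assumption $1<p<\infty$ is used in an essential way: keeping $q<\infty$ (which fails at $p=1$ and forces the separate Theorem~\ref{ThmExiBC1}) is what places the singular kernel in $L^q$ near the diagonal, and it is where a compatibility between $\alpha$ and $p$ (namely $\alpha<1/q$) enters.

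Once $\sup_{t}\|v(t)\|_\alpha<\infty$ is established, the continuity of $t\mapsto v(t)$ in $X_\alpha$ follows from the strong continuity of $G(t,\cdot)$ combined with the same H\"older-summable majorant via dominated convergence, and the identification of $v$ as the unique bounded mild solution given by \eqref{MildSolAp} is inherited without change from the corresponding steps of Theorem~\ref{ThmExiBC1}.
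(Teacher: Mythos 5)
Your proposal is correct and follows essentially the same route as the paper: the paper likewise reuses the identification and uniqueness arguments from Theorem \ref{ThmExiBC1} verbatim and reduces everything to the boundedness estimate, which it obtains by H\"older's inequality with conjugate exponent $q$ followed by splitting the $L^p$ factor into unit intervals $[t-k,t-k+1]$ and summing the resulting geometric series $e^{\gamma/2}/(e^{\gamma/2}-1)$, $e^{\delta/2}/(e^{\delta/2}-1)$ --- your only deviation is the cosmetic one of discretising before applying H\"older on each unit interval rather than after. Your explicit observation that convergence of the singular factor near the diagonal requires $q\alpha<1$ is a worthwhile refinement: the paper needs this too (its stated constant $\Gamma(q(1-\alpha))^{1/q}$ should in fact involve $\Gamma(1-q\alpha)$, finite only when $\alpha<1/q$) but leaves the restriction implicit.
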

\begin{proof}
The fact that the mild solution of equation \eqref{EqLinHomo} is defined uniquely by  \eqref{MildSolAp} is similar as in the proof of Theorem \ref{ThmExiBC1}. Hence, it suffices to prove that $u \in BC(\mathbb{R},X_{\alpha})$. Let $ h \in BS^{p}(\mathbb{R},X) $, using H\"older inequality, we obtain that 
\begin{eqnarray*}
&& \| \int_{\mathbb{R}}G (t,s)h(s)ds \|_{\alpha} \\
&=& \| \int_{-\infty}^{t}U (t,s)P(s)h(s)ds -\int^{+\infty}_{t} U (t,s)Q(s)h(s)ds \|_{\alpha} \\
&\leq & \int_{-\infty}^{t} \| U (t,s)P(s)h(s) \|_{\alpha} ds + \int^{+\infty}_{t} \| U (t,s)Q(s)h(s) \|_{\alpha} ds \\
&\leq & m(\alpha ) \int_{-\infty}^{t} (t-s)^{- \alpha} e^{- \gamma (t-s)}  \| h(s) \| ds  +  c(\alpha) \int^{+\infty}_{t} e^{- \delta (s-t)}  \| h(s) \|  ds \\
&\leq & m(\alpha ) \left(  \int_{-\infty}^{t} (t-s)^{-q\alpha} e^{-q\frac{\gamma}{2} (t-s)}  ds \right)^{\frac{1}{q}} \left( \int_{-\infty}^{t} e^{-p\frac{\gamma}{2} (t-s)} \| h(s)\|^{p} ds  \right)^{\frac{1}{p}}   \\ 
&+&  c(\alpha) \left( \int^{+\infty}_{t} e^{-q\frac{\delta}{2} (s-t)}  ds\right)^{\frac{1}{q}} \left( \int^{+\infty}_{t} e^{- p\frac{\delta}{2} (s-t)}  \| h(s) \|^{p}  ds\right)^{\frac{1}{p}}  \\
&= & m(\alpha )\left(  \dfrac{2}{q\gamma}\right)^{\alpha}  \Gamma(q(1-\alpha))^{\frac{1}{q}} \left( \sum_{k\geq 1} \int_{t-k}^{t-k+1} e^{-p\frac{\gamma}{2} (t-s)} \| h(s)\|^{p} ds  \right)^{\frac{1}{p}}   \\
 &+&  c(\alpha) \left( \dfrac{2}{q\delta} \right)^{\frac{1}{q}} \left(\sum_{k\geq 1} \int_{t+k-1}^{t+k}e^{- p\frac{\delta}{2} (s-t)}  \| h(s) \|^{p}  ds\right)^{\frac{1}{p}}  \\
&\leq & m(\alpha )\left(  \dfrac{2}{q\gamma}\right)^{\alpha}  \Gamma(q(1-\alpha))^{\frac{1}{q}} \left( \sum_{k\geq 1}e^{-p\frac{\gamma}{2} (k-1)} \int_{t-k}^{t-k+1}  \| h(s)\|^{p} ds  \right)^{\frac{1}{p}}  \\
 &+&  c(\alpha) \left( \dfrac{2}{q\delta} \right)^{\frac{1}{q}} \left(\sum_{k\geq 1}e^{- p\frac{\delta}{2} (k-1)}  \int_{t+k-1}^{t+k} \| h(s) \|^{p}  ds\right)^{\frac{1}{p}} \\
 &=&  \left[  m(\alpha )\left(  \dfrac{2}{q\gamma}\right)^{\alpha}  \Gamma(q(1-\alpha))^{\frac{1}{q}} \dfrac{e^{\frac{\gamma}{2}}}{e^{\frac{\gamma}{2}}-1}+ c(\alpha) \left( \dfrac{2}{q\delta} \right)^{\frac{1}{q}}\dfrac{e^{\frac{\delta}{2}}}{e^{\frac{\delta}{2}}-1} \right] \| h\|_{BS^{p}}, \ t\in \mathbb{R}.
\end{eqnarray*}
This proves the result.
\end{proof}
Next result, we show that the unique bounded mild solution $u$ is almost periodic.
\begin{theorem}\label{ThmExiAP1}
 Let $ h \in APS^{1}(\mathbb{R},X) \cap L^{\infty}(\mathbb{R},X) $ and assume that \textbf{(H1)}-\textbf{(H4)} are satisfied. Then, equation \eqref{EqLinHomo} has a unique almost periodic mild solution $ u: \mathbb{R}\longrightarrow X_{\alpha} $ given by the integral formula  \eqref{MildSolAp}.
\end{theorem}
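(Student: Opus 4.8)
The plan is to retain the uniqueness and the representation \eqref{MildSolAp} from Theorem~\ref{ThmExiBC1}: since $h\in L^\infty(\R,X)$, that theorem already gives that $u(t)=\int_{\R}G(t,s)h(s)\,ds$ is the unique bounded mild solution $u:\R\to X_\alpha$, so only the Bohr almost periodicity of $u$ remains. I would establish it through Bochner's sequential criterion, Theorem~\ref{Theorem charac AP with sequences}: given an arbitrary real sequence, I must produce a subsequence $(s_n)_n$ along which $u(\cdot+s_n)$ converges uniformly on $\R$. First, by the $S^1$-version of Bochner's criterion (Theorem~\ref{Theorem charac APSp with sequences}) I extract $(s_n)_n$ so that $h(\cdot+s_n)\to\tilde h$ in the $S^1$-sense, where $\tilde h\in L^\infty(\R,X)$ with $\|\tilde h\|_\infty\le\|h\|_\infty$; refining by a diagonal argument and using \textbf{(H3)}, I may assume in addition that $G(t+s_n,s+s_n)x\to\tilde G(t,s)x$ uniformly in $(t,s)$, the limit kernel $\tilde G$ inheriting the dichotomy estimates \eqref{prop2.3 estim1}--\eqref{prop2.3 estim2}. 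I will use repeatedly that these estimates are in fact operator-norm bounds, $\|G(t,s)\|_{\mathcal L(X,X_\alpha)}$ being controlled by the scalar weights $m(\alpha)(t-s)^{\alpha-1}e^{-\gamma(t-s)}$ and $c(\alpha)e^{-\delta(s-t)}$.

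After the change of variable $s\mapsto s+s_n$ in \eqref{MildSolAp}, setting $\tilde u(t):=\int_{\R}\tilde G(t,s)\tilde h(s)\,ds$, I would write
\[
u(t+s_n)-\tilde u(t)=\int_{\R}\big[G(t+s_n,s+s_n)-\tilde G(t,s)\big]\tilde h(s)\,ds+\int_{\R}G(t+s_n,s+s_n)\big[h(s+s_n)-\tilde h(s)\big]\,ds,
\]
and estimate the two integrals $J_1^n(t)$ and $J_2^n(t)$ separately, the goal being $\sup_t\big(\|J_1^n\|_\alpha+\|J_2^n\|_\alpha\big)\to0$.

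For $J_2^n$ I would split $G(t+s_n,s+s_n)$ via \eqref{GreenFun} and apply \eqref{prop2.3 estim1}--\eqref{prop2.3 estim2} with $g_n(s):=\|h(s+s_n)-\tilde h(s)\|$ in the role of $\|h\|$, exactly as in Theorem~\ref{ThmExiBC1}. Decomposing the two half-lines into unit intervals, the exponential factors yield geometric series and every term except the one adjacent to the singularity is bounded by a constant times $\sup_t\int_t^{t+1}g_n\to0$. The delicate piece is $\int_{t-1}^{t}(t-s)^{-\alpha}g_n(s)\,ds$, where the kernel is unbounded, and here the extra $L^\infty$ hypothesis is essential: splitting $[t-1,t]$ at $t-\eta$, the part on $[t-\eta,t]$ is at most $\tfrac{2\|h\|_\infty}{1-\alpha}\eta^{1-\alpha}$ (small for small $\eta$, uniformly in $n,t$, using $g_n\le 2\|h\|_\infty$), while the part on $[t-1,t-\eta]$ is at most $\eta^{-\alpha}\sup_t\int_t^{t+1}g_n$ (small for fixed $\eta$ and $n$ large); the resulting double limit gives $\sup_t\|J_2^n\|_\alpha\to0$.

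For $J_1^n$ the integrand carries the fixed limit $\tilde h$, so it is the operator difference $G(t+s_n,s+s_n)-\tilde G(t,s)$ that must be made small. I split the domain into $\{|s-t|>R\}$ and $\{|s-t|\le R\}$. On the tail the operator-norm dichotomy bounds together with $\|\tilde h\|_\infty$ give a contribution $\le C\|h\|_\infty e^{-cR}$, which is arbitrarily small for $R$ large, uniformly in $t$ and $n$. On the central part, approximating $\tilde h$ by simple functions and using the operator-norm bounds of Theorem~\ref{prop2.3} to absorb the approximation error against the integrable weight, the problem reduces to the pointwise convergence $G(t+s_n,s+s_n)x\to\tilde G(t,s)x$ for finitely many vectors $x$, which holds uniformly in $(t,s)$ by \textbf{(H3)}; hence $\sup_t\|J_1^n\|_\alpha\to0$. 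Combining the two estimates shows $u(\cdot+s_n)\to\tilde u$ uniformly, and Theorem~\ref{Theorem charac AP with sequences} yields $u\in AP(\R,X_\alpha)$. The main obstacle is precisely the $p=1$ singular-kernel estimate in $J_2^n$: for $p=1$ the weight $(t-s)^{-\alpha}$ cannot be separated from an $S^1$-control by H\"older, as it is for $1<p<\infty$, and it is the interpolation between the $L^\infty$ bound and the $S^1$-smallness (the splitting at $t-\eta$) that rescues it; a secondary difficulty is that \textbf{(H3)} is only pointwise in $x$, handled in $J_1^n$ through the operator-norm form of the dichotomy estimates combined with a simple-function approximation.
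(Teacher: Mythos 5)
Your proposal is correct in outline but takes a genuinely different route from the paper. The paper works directly with the Bohr definition: it splits $u=u^{P}+u^{Q}$, writes $u^{P}(t+\tau)-u^{P}(t)$ as an integral against $h(s+\tau)-h(s)$ plus an integral against $\bigl[U(t+\tau,s+\tau)P(s+\tau)-U(t,s)P(s)\bigr]h(s+\tau)$, and chooses a \emph{common} $\varepsilon$-almost period $\tau$ for $h$ (in the $S^{1}$ sense, \eqref{Almost perio h func}) and for the Green function (via \textbf{(H3)}, \eqref{Almost perio Green func}). Its substitute for your $[t-\eta,t]$-splitting is a Cauchy--Schwarz ``half-power'' trick: the integrand is written as a product of two square roots, one carrying the $S^{1}$-smallness or the bi-almost-periodicity, the other carrying the singular dichotomy weight $(t-s)^{-\alpha}e^{-\gamma(t-s)}$ and the bound $\|h\|_{\infty}$, so that the singular kernel is absorbed and only the half power of the small quantity survives. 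Your $\eta$-splitting achieves exactly the same interpolation between $\|h\|_{\infty}$ and $S^{1}$-smallness by more elementary means, and your diagnosis that this interpolation is the crux at $p=1$ matches the paper precisely. Where you are arguably \emph{more} careful than the paper: in \eqref{Almost perio Green func} the paper silently upgrades \textbf{(H3)} (strong bi-almost periodicity, pointwise in $x$) to an operator-norm bound $\le\varepsilon''\|x\|$ uniform in $x$, whereas you explicitly confront this with a simple-function reduction.

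Two steps in your write-up are under-specified and would need repair. First, the extraction ``by a diagonal argument'' of a subsequence along which $G(t+s_n,s+s_n)x\to\tilde G(t,s)x$ uniformly in $(t,s)$: \textbf{(H3)} is a Bohr-type statement (relatively dense sets of $\varepsilon$-periods), and passing to the Bochner-type sequential form requires a compactness/uniform-continuity argument for the diagonal translates $\sigma\mapsto G(\cdot+\sigma,\cdot+\sigma)x$, which is not automatic near the diagonal $s=t$; the paper avoids this entirely by staying with $\varepsilon$-periods. Second, ``approximating $\tilde h$ by simple functions'' uniformly over the moving windows $[t-R,t+R]$ implicitly requires the essential range of $\tilde h$ to be totally bounded, and this can fail for an $S^{1}$-almost periodic $L^{\infty}$ function (e.g.\ the $1$-periodic $h=\sum_k \chi_{A_k}e_k$ in $\ell^{2}$ with $(A_k)$ a partition of $[0,1)$ into sets of measure $2^{-k}$, whose essential range $\{e_k\}$ is not totally bounded). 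The fix is to route through the Bochner transform: $\tilde h^{b}$ is Bohr almost periodic, so $\{\tilde h^{b}(t):t\in\mathbb{R}\}$ is relatively compact in $L^{1}([0,1],X)$, a finite $\varepsilon$-net of which yields finitely many vectors and a uniform-in-$t$ piecewise approximation with unit-interval $L^{1}$ error $\le\varepsilon$; you must also excise $[t-\eta,t]$ \emph{before} multiplying the $L^{1}$ error by the supremum of the kernel, since $(t-s)^{\alpha-1}$ is unbounded there. With these two repairs your argument goes through and is, if anything, more scrupulous about the uniformity in $x$ than the paper's own proof.
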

\begin{proof}
Let $ h \in APS^{1}(\mathbb{R},X) \cap L^{\infty}(\mathbb{R},X) $ and $u$ be the unique bounded mild solution of equation \eqref{EqLinHomo} provided by Theorem \ref{ThmExiBC1}. Then, we have
\begin{eqnarray*}
u(t) &=&\int_{-\infty}^{t}U (t,s)P(s)h(s)ds -\int^{+\infty}_{t} \tilde{U} (t,s)Q(s)h(s)ds \\
&:=& u^{P}(t)+ u^{Q}(t), \quad  t\in \mathbb{R}
\end{eqnarray*}
where $ \displaystyle  u^{P}(t)= \int_{-\infty}^{t}U (t,s)P(s)h(s)ds $ and $ \displaystyle u^{Q}(t)= -\int^{+\infty}_{t} \tilde{U} (t,s)Q(s)h(s)ds $.\\
Let $ t, \ \tau \in \mathbb{R}$. Thus, in view of Cauchy–Schwarz inequality, we have
\begin{eqnarray*}
&&\|  u^{P}(t+\tau )- u^{P}(t) \|_{\alpha} \\
&=& \| \int_{-\infty}^{t+\tau}U (t+\tau,s)P(s)h(s)ds -\int_{-\infty}^{t} U (t,s)P(s)h(s)ds \|_{\alpha} \\
&\leq & \int_{-\infty}^{t} \| U (t,s )P(s)\left[ h(s+\tau)-h(s) \right] \|_{\alpha} ds \\
 &+& \int_{-\infty}^{t} \| \left[ U (t+\tau, s+\tau)P(s+\tau)-U (t, s)P(s)\right]h(s+\tau )  \|_{\alpha} ds \\
&\leq & m(\alpha ) \sqrt{2\gamma^{-1} \| h\|_{\infty} \Gamma(2\alpha+1) }  \left( \sum_{k\geq 1} e^{-\gamma (k-1)}\int_{t-k}^{t-k+1}  \| h(s+\tau)-h(s)\| ds  \right)^{\frac{1}{2}}\\
 &+& m(\alpha )  \sum_{k\geq 1}\left(  \int_{t-k}^{t-k+1} \| \left[ U (t+\tau, s+\tau)P(s+\tau)-U (t, s)P(s)\right]h(s+\tau )  \|_{\alpha} ds\right)^{\frac{1}{2}} \\ &\times & e^{- \frac{\gamma}{4}(k-1)}  \left( 2 \int_{t-k}^{t-k+1} (t-s)^{- \alpha} e^{- \frac{\gamma}{2}( (t-s)}  \| h(s+\tau )  \| ds  \right)^{\frac{1}{2}} \\
 &\leq & m(\alpha ) \sqrt{2\gamma^{-1} \| h\|_{\infty} \Gamma(2\alpha+1) }  \left( \sum_{k\geq 1} e^{-\gamma (k-1)}\int_{t-k}^{t-k+1}  \| h(s+\tau)-h(s)\| ds  \right)^{\frac{1}{2}}\\
 &+& m(\alpha ) \sqrt{2 \left( \frac{2}{\gamma} \right)^{1-\alpha}\Gamma(1+\alpha) \|h\|_{\infty} }  \\ &\times &  \sum_{k\geq 1}e^{- \frac{\gamma}{4}(k-1)}  \left(  \int_{t-k}^{t-k+1} \| \left[ U (t+\tau, s+\tau)P(s+\tau)-U (t, s)P(s)\right]h(s+\tau )  \|_{\alpha} ds\right)^{\frac{1}{2}} \\
\end{eqnarray*}
Let $\varepsilon>0$, since $ h \in APS^{1}(\mathbb{R},X)  $, it follows that, there exists $ l_\varepsilon >0 $ such that every interval of length $ l_\varepsilon >0 $ contains an element $\tau$ such that 
\begin{eqnarray}
 \int_{t}^{t+1}  \| h(s+\tau)-h(s)\| ds \leq \dfrac{\varepsilon^{2}\sqrt{e^{\gamma}-1}}{4 m(\alpha ) \sqrt{2\gamma^{-1} \| h\|_{\infty} \Gamma(2\alpha+1) e^{\gamma} }} \;\;   \text{uniformly for } \, t\in \mathbb{R}. \label{Almost perio h func}
\end{eqnarray}
Furthermore, by hypothesis \textbf{(H3)}, we can find the same almost period $\tau$ such that for each $x \in X$, we have
\begin{eqnarray}
\| U (t+\tau, s+\tau)P(s+\tau)x-U (t, s)P(s)x \|_{\alpha}  \leq  \dfrac{\varepsilon^{2}(e^{\frac{\gamma}{4}}-1)}{4 m(\alpha ) \|h\|_{\infty}\sqrt{2 \left( \frac{2}{\gamma} \right)^{1-\alpha}\Gamma(1+\alpha)  } } \| x\|  \label{Almost perio Green func}
\end{eqnarray}
uniformly for $ t,s\in \mathbb{R}$. Therefore,
$$ \|  u^{P}(t+\tau )- u^{P}(t) \|_{\alpha} \leq \dfrac{\varepsilon}{4}  +  \dfrac{\varepsilon}{4} = \dfrac{\varepsilon}{2} \;\;   \text{uniformly for } \, t\in \mathbb{R}.$$
By the same way, we prove that 
$$ \|  u^{Q}(t+\tau )- u^{Q}(t) \|_{\alpha} \leq  \dfrac{\varepsilon}{2} \;\;   \text{uniformly for } \, t\in \mathbb{R}.$$
Hence, $$ \|  u(t+\tau )- u(t) \|_{\alpha} \leq  \varepsilon \;\;   \text{uniformly for } \, t\in \mathbb{R}.$$
This proves the almost periodicity of the mild solution $u$.
\end{proof}
Similarly, for $1<p<\infty$, we can prove easily the following existence result of a unique almost periodic solution for equation \eqref{EqLinHomo}.
\begin{theorem}\label{ThmExiAP2}
Let $ h \in APS^{p}(\mathbb{R},X) $ and assume that \textbf{(H1)}-\textbf{(H4)} are satisfied. Then, equation \eqref{EqLinHomo} has a unique almost periodic mild solution $ u: \mathbb{R}\longrightarrow X_{\alpha} $ given by the integral formula  \eqref{MildSolAp}.
\end{theorem}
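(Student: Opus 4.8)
The plan is to follow the scheme of the proof of Theorem \ref{ThmExiAP1} almost verbatim, replacing the internal Cauchy--Schwarz inequality throughout by H\"older's inequality with the conjugate pair $(p,q)$. Since $APS^p(\mathbb{R},X)\subset BS^p(\mathbb{R},X)$, the theorem preceding Theorem \ref{ThmExiAP1} (the case $1<p<\infty$ with $h\in BS^p$) already guarantees that $u$ defined by \eqref{MildSolAp} is the unique bounded mild solution and that $u\in BC(\mathbb{R},X_\alpha)$; uniqueness and well-posedness are therefore settled, and the only thing left to establish is that $u\in AP(\mathbb{R},X_\alpha)$. As in Theorem \ref{ThmExiAP1}, I would write $u=u^P+u^Q$ with $u^P(t)=\int_{-\infty}^t U(t,s)P(s)h(s)\,ds$ and $u^Q(t)=-\int_t^{+\infty}\tilde U(t,s)Q(s)h(s)\,ds$, and treat the two summands separately, the argument for $u^Q$ being symmetric to that for $u^P$ via the estimate \eqref{prop2.3 estim1} and the bi--almost periodicity of the $Q$-part of $G$ contained in \textbf{(H3)}.

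For $u^P$, fix $t,\tau\in\mathbb{R}$ and split $u^P(t+\tau)-u^P(t)$ by the triangle inequality into a term $I_1$ carrying the increment $h(s+\tau)-h(s)$ and a term $I_2$ carrying the increment $[U(t+\tau,s+\tau)P(s+\tau)-U(t,s)P(s)]h(s+\tau)$ of the evolution family. For $I_1$ I would invoke the dichotomy bound \eqref{prop2.3 estim2}, apply H\"older with exponents $(q,p)$ to separate the kernel $(t-s)^{\alpha-1}e^{-\gamma(t-s)}$ from $\|h(s+\tau)-h(s)\|$, and then decompose $(-\infty,t]$ into the unit intervals $[t-k,t-k+1]$, $k\ge 1$, so that the exponential weight factors out as a convergent geometric series while the integral over each unit interval is controlled by the $S^p$-increment of $h$. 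Choosing $\tau$ to be a common $\varepsilon$-almost period (which exists because both $h\in APS^p$ and, by \textbf{(H3)}, the Green function are almost periodic, hence their relatively dense sets of almost periods intersect) makes $I_1$ as small as desired, uniformly in $t$.

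The main obstacle is the term $I_2$, where the integrand must be simultaneously decaying (to make the integral over $(-\infty,t]$ converge) and small (to profit from the bi--almost periodicity in \textbf{(H3)}); here, unlike in Theorem \ref{ThmExiAP1}, no $L^\infty$ bound on $h$ is available, only $\|h\|_{BS^p}$. I would resolve this by interpolating the two pointwise bounds on $\|[U(t+\tau,s+\tau)P(s+\tau)-U(t,s)P(s)]h(s+\tau)\|_\alpha$ valid at each $s$: the decay bound $2m(\alpha)(t-s)^{\alpha-1}e^{-\gamma(t-s)}\|h(s+\tau)\|$ coming from \eqref{prop2.3 estim2}, and the smallness bound $\kappa(\varepsilon)\|h(s+\tau)\|$ coming from the uniform estimate analogous to \eqref{Almost perio Green func} furnished by \textbf{(H3)}. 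Using the elementary fact that $\|x\|\le a^{1/q}b^{1/p}$ whenever $\|x\|\le a$ and $\|x\|\le b$, the integrand inherits a factor $\kappa(\varepsilon)^{1/p}$ together with a surviving kernel $(t-s)^{(\alpha-1)/q}e^{-(\gamma/q)(t-s)}$, which is still integrable near $s=t$ since $\alpha-1>-1$; a further application of H\"older with $(q,p)$ and the same unit-interval summation then bounds $I_2$ by a fixed constant times $\kappa(\varepsilon)^{1/p}\|h\|_{BS^p}$. Since $\kappa(\varepsilon)\to 0$ as $\varepsilon\to 0$, this term is arbitrarily small as well, uniformly in $t$.

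Combining the estimates for $u^P$ and $u^Q$ yields $\|u(t+\tau)-u(t)\|_\alpha\le\varepsilon$ uniformly in $t$ for every such common almost period $\tau$, which is precisely the Bohr almost periodicity of $u$ in $X_\alpha$. I expect the bookkeeping of the H\"older exponents and of the geometric-series constants to be the only delicate point; the conceptual content is identical to the case $p=1$ already carried out, with the interpolation trick for $I_2$ playing the role that the square-root splitting plays there.
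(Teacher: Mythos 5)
Your proposal is correct, and it follows the route the paper itself intends: the paper proves only the $p=1$ case in detail (Theorem \ref{ThmExiAP1}) and dispatches Theorem \ref{ThmExiAP2} with the single word ``similarly,'' so there is no written proof to deviate from. What your write-up supplies --- and what ``similarly'' conceals --- is the repair of the two places where the $p=1$ argument genuinely uses $h\in L^{\infty}(\mathbb{R},X)$: the square-root (Cauchy--Schwarz) splitting of the Green-function term and the normalization by $\|h\|_{\infty}$ in \eqref{Almost perio Green func}. Your geometric-mean interpolation $\|x\|\le a^{1/q}b^{1/p}$, trading part of the smallness $\kappa(\varepsilon)$ from \textbf{(H3)} for a surviving kernel $(t-s)^{(\alpha-1)/q}e^{-(\gamma/q)(t-s)}$ that remains integrable at $s=t$ for every $q$, is exactly the correct $p$-analogue of the square-root device in the $p=1$ proof, and the subsequent unit-interval summation against $\|h\|_{BS^{p}}$ is the same mechanism the paper uses in its $BS^{p}$ well-posedness theorem. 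In this respect your argument is the intended one, carried out more carefully than the paper carries it out anywhere.

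Two caveats, both of which you inherit from the paper rather than introduce. First, in your $I_1$ estimate you place the full singular kernel in $L^{q}$, so the factor $\left(\int_{-\infty}^{t}(t-s)^{q(\alpha-1)}e^{-q\frac{\gamma}{2}(t-s)}\,ds\right)^{\frac{1}{q}}$ converges only when $q(1-\alpha)<1$, i.e.\ $p>1/\alpha$; the theorem as stated carries no hypothesis linking $p$ and $\alpha$, and the paper's own $BS^{p}$ theorem has the identical implicit restriction (its constant $\Gamma(q(1-\alpha))^{\frac{1}{q}}$ presupposes convergence of precisely this integral). Since the same restriction is already needed for the formula \eqref{MildSolAp} to define a continuous function for general $BS^{p}$ data, your proof loses nothing relative to the paper, but the constraint deserves to be made explicit. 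Second, ``the relatively dense sets of almost periods intersect'' is not literally valid for two arbitrary relatively dense sets; the common $\varepsilon$-almost period $\tau$ should instead be produced via Bochner's criterion applied jointly to $h^{b}$ and the Green function (equivalently, to the pair viewed as one almost periodic function into a product space), which is also the only correct reading of the paper's phrase ``we can find the same almost period'' in the proof of Theorem \ref{ThmExiAP1}.
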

Now, we return to equation \eqref{Eq_1} and we give our main results on the existence and uniqueness of almost periodic solutions.  Without loss of generalities, we assume that $ f(t,0)=0,\; t\in \mathbb{R} $. If it is not the case, we can take $\tilde{f}(t,x)=f(t,x)-f(t,0)$ instead of $f$. Furthermore, consider $u\in AP(\mathbb{R},X_\alpha)$, then using Theorem \ref{ThmComApSpChapter1}, the function $h=f(\cdot,u(\cdot)) \in APS^{p}(\mathbb{R},X)$ for all $1\leq p<\infty$. Moreover, let $ f(\cdot,x) \in APS^{1}(\mathbb{R},X ) \cap L^{\infty}(\mathbb{R},X) $ (resp. $f(\cdot,x) \in APS^{p}(\mathbb{R},X )$, $ 1<p<\infty $)  for each $x \in X_{\alpha}$. Then, in view of Theorems \ref{ThmExiAP1} and \ref{ThmExiAP2},
 the map $F: AP(\mathbb{R},X_\alpha ) \longrightarrow AP(\mathbb{R},X_\alpha )$ given  by $$   Fu(t)=  \int_{\mathbb{R}}G (t,s)f(s,u(s))ds, \quad t\in \mathbb{R} $$ is well-defined.
\begin{theorem}\label{Main Theorem1 Eq1}
Let $ f(\cdot,x) \in APS^{1}(\mathbb{R},X ) \cap L^{\infty}(\mathbb{R},X) $ (resp. $f(\cdot,x) \in APS^{p}(\mathbb{R},X )$, $ 1<p<\infty $)  for each $x \in X_{\alpha}$ satisfying \textbf{(H5)} with $ L_{\rho} \in BS^{p}(\mathbb{R},\mathbb{R}^{+})$ for $ 1<p<\infty $.  
Assume that \textbf{(H1)}-\textbf{(H4)} hold. If there exists $\rho >0$ such that $$ \| L_\rho \|_{BS^{p}} <   \left[  m(\alpha )\left(  \dfrac{2}{q\gamma}\right)^{\alpha}  \Gamma(q(1-\alpha))^{\frac{1}{q}} \dfrac{e^{\frac{\gamma}{2}}}{e^{\frac{\gamma}{2}}-1}+ c(\alpha) \left( \dfrac{2}{q\delta} \right)^{\frac{1}{q}}\dfrac{e^{\frac{\delta}{2}}}{e^{\frac{\delta}{2}}-1} \right]^{-1} .$$ Then, equation \eqref{Eq_1} has a unique almost periodic solution $u: \mathbb{R}\longrightarrow X_{\alpha}$  with $ \| u\|_{\infty} \leq \rho $.
\end{theorem}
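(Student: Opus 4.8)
The plan is to recast equation \eqref{Eq_1} as a fixed-point problem for the operator $F$ introduced above and to invoke the Banach contraction principle on a suitable closed ball of $AP(\mathbb{R},X_\alpha)$. Recall that, by the composition result Theorem \ref{ThmComApSpChapter1} together with the linear existence results Theorems \ref{ThmExiAP1}--\ref{ThmExiAP2}, the map
\[
Fu(t)=\int_{\mathbb{R}}G(t,s)f(s,u(s))\,ds
\]
sends $AP(\mathbb{R},X_\alpha)$ into itself, and a function $u\in AP(\mathbb{R},X_\alpha)$ is an almost periodic mild solution of \eqref{Eq_1} precisely when $Fu=u$. Writing
\[
C:=m(\alpha)\Bigl(\tfrac{2}{q\gamma}\Bigr)^{\alpha}\Gamma(q(1-\alpha))^{\frac1q}\frac{e^{\gamma/2}}{e^{\gamma/2}-1}+c(\alpha)\Bigl(\tfrac{2}{q\delta}\Bigr)^{\frac1q}\frac{e^{\delta/2}}{e^{\delta/2}-1}
\]
for the constant appearing in the a priori $BS^p$-estimate established for the linear equation \eqref{EqLinHomo}, the hypothesis reads $\|L_\rho\|_{BS^p}<C^{-1}$, so that $\kappa:=C\,\|L_\rho\|_{BS^p}<1$. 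I would fix this $\rho$ and consider the closed ball $B_\rho:=\{u\in AP(\mathbb{R},X_\alpha):\ \|u\|_\infty\le\rho\}$, a complete metric space for the sup-norm.

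First I would check that $F$ maps $B_\rho$ into itself. Since $f(t,0)=0$, hypothesis \textbf{(H5)} gives $\|f(s,u(s))\|\le L_\rho(s)\|u(s)\|_\alpha\le\rho\,L_\rho(s)$ for every $u\in B_\rho$, whence $h:=f(\cdot,u(\cdot))$ satisfies $\|h\|_{BS^p}\le\rho\,\|L_\rho\|_{BS^p}$. Applying the $BS^p$-estimate for \eqref{EqLinHomo} to $h$ then yields $\|Fu\|_\infty\le C\|h\|_{BS^p}\le\kappa\,\rho\le\rho$, so $Fu\in B_\rho$. Next I would establish the contraction: for $u,v\in B_\rho$, \textbf{(H5)} gives, for a.e.\ $s$, $\|f(s,u(s))-f(s,v(s))\|\le L_\rho(s)\|u(s)-v(s)\|_\alpha\le L_\rho(s)\|u-v\|_\infty$, hence $\|f(\cdot,u(\cdot))-f(\cdot,v(\cdot))\|_{BS^p}\le\|L_\rho\|_{BS^p}\|u-v\|_\infty$. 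Since $Fu-Fv=\int_{\mathbb{R}}G(\cdot,s)[f(s,u(s))-f(s,v(s))]\,ds$, the same linear estimate applied to the difference gives $\|Fu-Fv\|_\infty\le C\|L_\rho\|_{BS^p}\|u-v\|_\infty=\kappa\|u-v\|_\infty$ with $\kappa<1$. By the Banach fixed-point theorem, $F$ has a unique fixed point $u\in B_\rho$, which is the desired unique almost periodic solution of \eqref{Eq_1} with $\|u\|_\infty\le\rho$.

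The substantive analytic work has already been front-loaded into the linear theory and the composition theorem; consequently the only genuinely delicate point is the bookkeeping of constants, namely recognizing that both the self-map factor and the contraction factor are governed by exactly the same constant $C$ of the $BS^p$-estimate, so that the single smallness condition $\|L_\rho\|_{BS^p}<C^{-1}$ simultaneously secures $Fu\in B_\rho$ and strict contractivity. For the borderline case $p=1$ (where $f(\cdot,x)\in APS^1\cap L^\infty$), the identical scheme runs, except that the $BS^p$-bound is replaced by the $L^\infty$-based estimate from Theorem \ref{ThmExiBC1} (with constant $m(\alpha)\gamma^{\alpha-1}\Gamma(1-\alpha)+c(\alpha)\delta^{-1}$) and one uses $\|h\|_\infty\le\rho\|L_\rho\|_\infty$; I would carry out $1<p<\infty$ in full and indicate that $p=1$ is entirely analogous.
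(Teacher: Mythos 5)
Your proposal is correct and follows essentially the same route as the paper: a Banach fixed-point argument for $Fu(t)=\int_{\mathbb{R}}G(t,s)f(s,u(s))\,ds$ on the ball $\{u\in AP(\mathbb{R},X_\alpha):\|u\|_\infty\le\rho\}$, with well-definedness from the composition theorem and Theorems \ref{ThmExiAP1}--\ref{ThmExiAP2}, and both the invariance and the contraction governed by the same constant $C$ so that $\|L_\rho\|_{BS^p}<C^{-1}$ suffices. The only cosmetic difference is that you invoke the already-proved linear $BS^p$ a priori bound applied to $h=f(\cdot,u(\cdot))$ and to differences, whereas the paper re-runs the H\"older/dichotomy computation inline; the content is identical.
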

\begin{proof}
Consider the set $\Lambda_{\rho}^{AP} := \{ v \in AP(\mathbb{R},X_\alpha ) : \, \|v\|_{\infty} \leq \rho  \}$
and define the map $F: \Lambda_{\rho}^{AP}  \longrightarrow AP(\mathbb{R},X_\alpha )$  by 
$$   Fu(t)=  \int_{\mathbb{R}}G (t,s)f(s,u(s))ds, \quad t\in \mathbb{R}. $$
First, we show that $ F \Lambda_{\rho}^{AP} \subset \Lambda_{\rho}^{AP}$. Indeed, let $u \in \Lambda_{\rho}^{AP}$. Then, by assumptions on $f$, we obtain that
\begin{eqnarray*}
\|Fu(t)\|_{\alpha} &\leq & \int_{\mathbb{R}}\| G (t,s)f(s,u(s)) \|_{\alpha} ds \\ 
&\leq &  m(\alpha ) \int_{-\infty}^{t} (t-s)^{- \alpha} e^{- \gamma (t-s)}  \| f(s,u(s))-f(s,0) \| ds \\ &+& c(\alpha) \int^{+\infty}_{t} e^{- \delta (s-t)}  \|f(s,u(s))-f(s,0) \|  ds \\
 &\leq & \rho m(\alpha ) \int_{-\infty}^{t} (t-s)^{- \alpha} e^{- \gamma (t-s)} L_{\rho}(s) ds \\
 &+& \rho c(\alpha) \int^{+\infty}_{t} e^{- \delta (s-t)} L_{\rho}(s)   ds \\
 &\leq & \rho m(\alpha ) \left( \int_{-\infty}^{t} (t-s)^{- q\alpha} e^{- q\frac{ \gamma}{2} (t-s)} ds\right)^{\frac{1}{q}} \left(  \int_{-\infty}^{t} e^{-p \frac{ \gamma}{2} (t-s)} |L_{\rho}(s) |^{p} ds\right)^{\frac{1}{p}} \\
 &+& \rho c(\alpha) \left( \int^{+\infty}_{t} e^{- q\frac{ \delta}{2} (s-t)} ds\right)^{\frac{1}{q}}  \left(  \int^{+\infty}_{t} e^{-p \frac{ \delta}{2} (s-t)} | L_{\rho}(s) |^{p} ds \right)^{\frac{1}{p}}  \\
& \leq & \rho  \left[  m(\alpha )\left(  \dfrac{2}{q\gamma}\right)^{\alpha}  \Gamma(q(1-\alpha))^{\frac{1}{q}} \dfrac{e^{\frac{\gamma}{2}}}{e^{\frac{\gamma}{2}}-1}+ c(\alpha) \left( \dfrac{2}{q\delta} \right)^{\frac{1}{q}}\dfrac{e^{\frac{\delta}{2}}}{e^{\frac{\delta}{2}}-1} \right] \| L_{\rho}\|_{BS^{p}}
\\ &\leq & \rho, \quad t\in \mathbb{R}.
\end{eqnarray*}
Hence, $ F \Lambda_{\rho}^{AP} \subset \Lambda_{\rho}^{AP}$. Therefore, let $u, \ v \in  \Lambda_{\rho}^{AP}$. Then, a straightforward calculation yields
\begin{eqnarray*}
\|Fu(t)-Fv(t)\|_{\alpha} &\leq & \int_{\mathbb{R}}\| G (t,s)\left[ f(s,u(s))-f(s,u(s))\right]  \|_{\alpha} ds \\ 
&\leq &  m(\alpha ) \int_{-\infty}^{t} (t-s)^{- \alpha} e^{- \gamma (t-s)}  \| f(s,u(s))-f(s,v(s)) \| ds \\ &+& c(\alpha) \int^{+\infty}_{t} e^{- \delta (s-t)}  \|f(s,u(s))-f(s,v(s)) \|  ds \\
 &\leq &  m(\alpha ) \int_{-\infty}^{t} (t-s)^{- \alpha} e^{- \gamma (t-s)} L_{\rho}(s) ds  \| u-v\|_{\infty} \\
 &+&  c(\alpha) \int^{+\infty}_{t} e^{- \delta (s-t)} L_{\rho}(s)   ds   \| u-v\|_{\infty} \\
&\leq & \| L_{\rho}\|_{BS^{p}}  \left[  m(\alpha )\left(  \dfrac{2}{q\gamma}\right)^{\alpha}  \Gamma(q(1-\alpha))^{\frac{1}{q}} \dfrac{e^{\frac{\gamma}{2}}}{e^{\frac{\gamma}{2}}-1}+ c(\alpha) \left( \dfrac{2}{q\delta} \right)^{\frac{1}{q}}\dfrac{e^{\frac{\delta}{2}}}{e^{\frac{\delta}{2}}-1} \right]   \| u-v\|_{\infty}, 
\end{eqnarray*}
for all $ t \in \mathbb{R} $. Thus, by assumption, the map $F$ defines a contraction in  $\Lambda_{\rho}^{AP}$. Consequently, by Banach fixed point Theorem, we obtain the existence and uniqueness of a solution $ u \in  \Lambda_{\rho}^{AP}$. This proves the result.
\end{proof}
\begin{theorem}\label{Main Theorem2 Eq1}
Let $ f(\cdot,x) \in APS^{1}(\mathbb{R},X ) \cap L^{\infty}(\mathbb{R},X) $ (resp. $f(\cdot,x) \in APS^{p}(\mathbb{R},X )$, $ 1<p<\infty $)  for each $x \in X_{\alpha}$ satisfying \textbf{(H5)} with $ L_{\rho} \in L^{\infty}(\mathbb{R},\mathbb{R}^{+})$.  
Assume that \textbf{(H1)}-\textbf{(H4)} hold. If there exists $\rho >0$ such that 
\begin{eqnarray}
\| L_\rho \|_{\infty} < \left( m(\alpha)\gamma^{\alpha -1} \Gamma(1+\alpha) +c(\alpha)\delta^{-1}\right)^{-1} . \label{Contra cond Theorem2}
\end{eqnarray}
 Then, equation \eqref{Eq_1} has a unique almost periodic solution $u: \mathbb{R}\longrightarrow X_{\alpha}$  with $ \| u\|_{\infty} \leq \rho $.
\end{theorem}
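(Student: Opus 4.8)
The plan is to reproduce the Banach fixed-point scheme of Theorem \ref{Main Theorem1 Eq1} almost verbatim, replacing the H\"older estimate built on the $BS^p$ norm by the direct sup-norm estimate already carried out in the proof of Theorem \ref{ThmExiBC1}. As there, I first normalize $f(t,0)=0$ (otherwise replace $f$ by $\tilde{f}(t,x)=f(t,x)-f(t,0)$), put $\Lambda_\rho^{AP}:=\{v\in AP(\mathbb{R},X_\alpha):\ \|v\|_\infty\le\rho\}$, and define $Fu(t)=\int_{\mathbb{R}}G(t,s)f(s,u(s))\,ds$. For well-definedness I would argue exactly as in the paragraph preceding Theorem \ref{Main Theorem1 Eq1}: for $u\in\Lambda_\rho^{AP}$ the composition result Theorem \ref{ThmComApSpChapter1} yields $h:=f(\cdot,u(\cdot))\in APS^p(\mathbb{R},X)$, and together with the standing integrability hypothesis on $f(\cdot,x)$ this lets Theorem \ref{ThmExiAP1} (resp. Theorem \ref{ThmExiAP2}) guarantee $Fu\in AP(\mathbb{R},X_\alpha)$.

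Next I would verify the self-map property. Using the dichotomy estimates \eqref{prop2.3 estim1}--\eqref{prop2.3 estim2}, the hypothesis \textbf{(H5)} together with $\|u(s)\|_\alpha\le\rho$ gives $\|f(s,u(s))\|=\|f(s,u(s))-f(s,0)\|\le L_\rho(s)\rho\le\|L_\rho\|_\infty\,\rho$. Splitting the integral into $\int_{-\infty}^t U(t,s)P(s)\,\cdot$ and $\int_t^{+\infty}\tilde{U}(t,s)Q(s)\,\cdot$ and running the same one-line computation as in Theorem \ref{ThmExiBC1}, the scalar integrals $\int_{-\infty}^t (t-s)^{-\alpha}e^{-\gamma(t-s)}\,ds$ and $\int_t^{+\infty}e^{-\delta(s-t)}\,ds$ factor out, producing $\|Fu(t)\|_\alpha\le\big(m(\alpha)\gamma^{\alpha-1}\Gamma(1+\alpha)+c(\alpha)\delta^{-1}\big)\|L_\rho\|_\infty\,\rho$ uniformly in $t$; condition \eqref{Contra cond Theorem2} forces the bracket times $\|L_\rho\|_\infty$ to be $<1$, hence $\|Fu\|_\infty\le\rho$ and $F\Lambda_\rho^{AP}\subset\Lambda_\rho^{AP}$. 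For the contraction, given $u,v\in\Lambda_\rho^{AP}$ both take values in $B(0,\rho)$, so \textbf{(H5)} yields $\|f(s,u(s))-f(s,v(s))\|\le L_\rho(s)\|u-v\|_\infty$, and the identical estimate gives $\|Fu-Fv\|_\infty\le\big(m(\alpha)\gamma^{\alpha-1}\Gamma(1+\alpha)+c(\alpha)\delta^{-1}\big)\|L_\rho\|_\infty\|u-v\|_\infty$ with constant strictly below $1$ by \eqref{Contra cond Theorem2}; Banach's fixed point theorem then supplies the unique $u\in\Lambda_\rho^{AP}$.

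The conceptual content is essentially discharged by the earlier results, so the main obstacle is not a hard inequality but bookkeeping: one must make sure that $F$ indeed lands in $AP(\mathbb{R},X_\alpha)$ in the Bohr sense, which is where the composition theorem \ref{ThmComApSpChapter1} and the linear existence theorems \ref{ThmExiAP1}--\ref{ThmExiAP2} are genuinely used, and one must confirm that the $\Gamma$-factor coming from $\int_0^\infty r^{-\alpha}e^{-\gamma r}\,dr$ matches the constant written in \eqref{Contra cond Theorem2} (the analogous computation in Theorem \ref{ThmExiBC1} records it as $\Gamma(1-\alpha)$, so this normalization deserves a careful check). Beyond that, the proof is a routine transcription of Theorem \ref{Main Theorem1 Eq1} with the sup-norm bound in place of the Stepanov bound.
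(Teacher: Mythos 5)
Your proposal follows essentially the same route as the paper: the identical set $\Lambda_\rho^{AP}$, the same map $F$ built from the Green's function, well-definedness via Theorem \ref{ThmComApSpChapter1} together with Theorems \ref{ThmExiAP1}--\ref{ThmExiAP2}, the sup-norm self-map and contraction estimates from \textbf{(H5)}, and Banach's fixed point theorem. Your flagged normalization check is also well taken: since $\int_0^\infty r^{-\alpha}e^{-\gamma r}\,dr=\gamma^{\alpha-1}\Gamma(1-\alpha)$, the factor $\Gamma(1+\alpha)$ appearing in \eqref{Contra cond Theorem2} and in the paper's own proof appears to be a typo for $\Gamma(1-\alpha)$, consistent with the computation in Theorem \ref{ThmExiBC1}.
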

\begin{proof}
Consider the map $F: \Lambda_{\rho}^{AP}  \longrightarrow AP(\mathbb{R},X_\alpha )$  by 
$$   Fu(t)=  \int_{\mathbb{R}}G (t,s)f(s,u(s))ds, \quad t\in \mathbb{R}. $$
We argue as the same as in the proof of Theorem \ref{Main Theorem1 Eq1}. Indeed, let $u \in \Lambda_{\rho}^{AP}$. Then, by assumptions on $f$, we claim that
\begin{eqnarray*}
\|Fu(t)\|_{\alpha} &\leq & \int_{\mathbb{R}}\| G (t,s)f(s,u(s)) \|_{\alpha} ds \\ 
&\leq &  m(\alpha ) \int_{-\infty}^{t} (t-s)^{- \alpha} e^{- \gamma (t-s)}  \| f(s,u(s))-f(s,0) \| ds \\ &+& c(\alpha) \int^{+\infty}_{t} e^{- \delta (s-t)}  \|f(s,u(s))-f(s,0) \|  ds \\
&\leq &\rho  \left[ m(\alpha ) \gamma^{\alpha -1} \Gamma(1+\alpha)  + c(\alpha) \delta^{-1} \right] \| L_\rho \|_{\infty} \\ &\leq & \rho .
\end{eqnarray*}
Hence, $ F \Lambda_{\rho}^{AP} \subset \Lambda_{\rho}^{AP}$. Let $u, \ v \in  \Lambda_{\rho}^{AP}$. Then, using the same calculus yields,
\begin{eqnarray*}
\|Fu(t)-Fv(t)\|_{\alpha} &\leq & \int_{\mathbb{R}}\| G (t,s)\left[ f(s,u(s))-f(s,u(s))\right]  \|_{\alpha} ds \\ 
&\leq &  m(\alpha ) \int_{-\infty}^{t} (t-s)^{- \alpha} e^{- \gamma (t-s)}  \| f(s,u(s))-f(s,v(s)) \| ds \\ &+& c(\alpha) \int^{+\infty}_{t} e^{- \delta (s-t)}  \|f(s,u(s))-f(s,v(s)) \|  ds \\
&\leq & \| L_\rho \|_{\infty}  \left[ m(\alpha )\gamma^{\alpha -1} \Gamma(1+\alpha)  + c(\alpha) \delta^{-1} \right]  \| u-v\|_{\infty}, \quad t\in \mathbb{R}.
\end{eqnarray*}
Thus, $F$ defines a contraction map in  $\Lambda_{\rho}^{AP}$.  Hence, we conclude by Banach fixed point Theorem that there exists a unique solution $ u \in  \Lambda_{\rho}^{AP}$. This proves the result.
\end{proof}
\section{Application}\label{Section5}
Consider the following ecological two--species competition model with diffusion and time--dependent parameters i.e., a nonautonomous system with two reaction-diffusion equations of two interacting density species $ u(t,x)$ and $ v(t,x) $ at location $x$ that belongs to a habitat $\Omega \subset \R^N$ ($N \geq 1$), in a generalized almost periodic environment, namely
\begin{equation*}
 \left\{
    \begin{aligned}
     \frac{\partial}{\partial t}u(t,x)&=  d_1 (t)\Delta u(t,x) + a(t)u(t, x)- c_1 (t)\dfrac{ v(t,x) u(t,x) }{1+| \nabla v(t,x)|}, \, t \in \mathbb{R}, x\in \Omega, \\   
     \frac{\partial}{\partial t}u(t,x)&=  d_2 (t)\Delta v(t,x) - b(t)v(t, x)+ c_2 (t)\dfrac{ u(t,x) v(t,x)}{1+|\nabla u(t,x)|}, \,t \in \mathbb{R}, x\in \Omega, \\
     u(t,x)|_{\partial \Omega}& = 0 ; \  v(t,x)|_{\partial \Omega} =0, \; t \in \mathbb{R}, x\in \partial\Omega  
    \end{aligned}
  \right. 
\end{equation*}
where, \\
$\bullet$ $\Omega \subset \R^N$ ($N \geq 1$) is an open bounded subset with sufficiently smooth boundary i.e.,  with Lipschitz boundary conditions.\\
$ \bullet $ $ \Delta :=\displaystyle \sum_{k=1}^{n}\dfrac{\partial^{2}}{\partial \xi_{i}^{2}}$ is the Laplace operator on $\Omega  $, $ d_{i} \in C^{\mu}(\mathbb{R}) $, (eventually here $\mu=1$),  $i=1,2$ are the diffusion terms corresponding to the populations $  u(t,x) $ and $v(t,x)$ respectively, such that $0< \inf_{t \in \mathbb{R}}(d_{i}) \leq \sup _{t\in \mathbb{R}}(d_{i}) <\infty.$\\
$ \bullet $ $ a, \ b \in L^{1}_{loc}(\mathbb{R}, \mathbb{R}^{+})  $ correspond to the growth terms in the absence  of interaction, of the populations  $u(t,x) $ and $v(t,x)$ respectively.\\
$ \bullet $. The nonlinear terms $ g_{i}:\mathbf{R}\times \mathbf{R}\times \mathbf{R}\times \mathbf{R} \longrightarrow  \mathbf{R} $, $ i=1,2 $ are defined by $$ g_1 (t,u,v,\nabla v(t,x)) =- a(t) u(t,x) + c_1 (t)\dfrac{ v(t,x) u(t,x) }{1+| \nabla v(t,x)|}
$$  
and 
$$ g_2 (t,u,v,\nabla u(t,x)) =c_2 (t)\dfrac{ u(t,x) v(t,x)}{1+|\nabla u(t,x)|}
$$  
where $ c_{i} \in L^{1}_{loc}(\mathbb{R}, \mathbb{R}^{+}) $, $i=1,2$ are the interaction terms of $u(t,x) $ and $v(t,x)$ respectively.\\
\subsection*{The abstract formulation}  Consider the Banach space $X:=C_{0}(\overline{\Omega})\times C_{0}(\overline{\Omega}) ,$ equipped with the given norm:
$\| \begin{pmatrix}
    \varphi   \\
    \psi  
  \end{pmatrix} \|=\|\varphi\|_{\infty}+\|\psi\|_{\infty}$, where $C_{0}(\overline{\Omega})$ is the space of continuous functions $ \varphi : \overline{\Omega}\longleftrightarrow \mathbb{R} $ such that $\varphi|_{\partial \Omega} = 0$.
  We define the linear operators $ (A(t) , D(A(t))), \ t\in \mathbb{R}$, by 
\begin{equation}
 \left\{
    \begin{aligned}
     A(t)  
     &:= \begin{pmatrix}
  d_{1}(t)   \Delta    & 0\\
 0 &   d_{2}(t) \Delta -b(t) 
  \end{pmatrix} ,\\   
     D(A(t) )& =C_{0}^{2}(\overline{\Omega}) \times C_{0}^{2}(\overline{\Omega}):=D
    \end{aligned} \label{EqApp1Chapter1}
  \right. 
\end{equation} 
where  $C_{0}^{2}(\overline{\Omega}):=\left\{  \varphi \in C_{0}(\overline{\Omega}) \cap H_{0}^{1}\left(  \Omega \right)  :\  \Delta \varphi  \in C_{0}(\overline{\Omega})  \right\}$ . Let  $\alpha\in(1/2,1)$ and $X_\alpha$ be the real interpolation space between $X$ and $D(A(t))$ given by $X_\alpha^t=X_\alpha := C_{\alpha}(\overline{\Omega})\times C_{\alpha}(\overline{\Omega})$ where $C_\alpha (\overline{\Omega})= \{\varphi \in C^{2\alpha}(\overline{\Omega}): \varphi|_{\partial \Omega} = 0\}$
with the norm $ \|\cdot\|_{0,\alpha}:= \|\cdot\|_{\infty}+\|\nabla \cdot\|_{\infty}+[\cdot]_{\theta} $ where $  [\varphi]_{\theta}:= \sup_{x,y\in \overline{\Omega}, \ x\neq y} \dfrac{|\varphi(x)-\varphi(y) |}{|x-y |^{\theta}}$, $\theta=2\alpha -1 $. Hence, $\| \begin{pmatrix}
    \varphi   \\
    \psi  
  \end{pmatrix} \|_{\alpha}=\|\varphi\|_{0,\alpha}+\|\psi\|_{0,\alpha}$ defines a norm on $X_\alpha $.
It is clear that, $X_\alpha\hookrightarrow X$, see \cite{Lun} for more details about the spaces of H\"older continuous functions $C^{2\alpha}(\overline{\Omega})$. Therefore, the nonlinear term   $ f: \mathbb{R}\times  X_{\alpha}\longrightarrow X $ is defined by $$ f(t, \begin{pmatrix}
   \varphi  \\
    \psi 
  \end{pmatrix} )(x)= \begin{pmatrix}
 g_1 (t, \varphi (x),\psi (x),\nabla  \psi (x))  \\
  g_2(t,\varphi (x),\psi (x),\nabla \varphi(x))
  \end{pmatrix} .$$
Then, \eqref{Eq3Chapter1} takes the following abstract form that corresponds to equation \eqref{Eq_1}:
\begin{equation*}
     x'(t)= A(t) x(t) +f(t,x(t)), \quad   \; t \in \mathbb{R}.
\end{equation*}

In order to prove the existence and uniqueness of almost periodic solutions to \eqref{Eq_1}, we check forward our main hypotheses \textbf{(H1)-(H5)}. One can remark that the domains are independent of $t$ i.e., $D(A(t))=D $ which yields that the real interpolation spaces $X_\alpha$ are also indepedent of $t$ and  hypothesis \textbf{(H4)} hold. Now, we prove that $ (A(t),D(A(t))), \ t\in \mathbb{R} $ satisfy hypotheses \textbf{(H1)-(H3)}. Define the operators  $A_{1}(t)=d_{1}(t)   \Delta  $ and  $A_{2}(t)=d_{2}(t)   \Delta  $. By \cite{DaSi}, the operator $A_0:=-\Delta $ on $C_{0}(\overline{\Omega})$ is sectorial with constants $M \geq 1$ and $\theta  \in (\frac{\pi}{2},\pi )$ such that  
\begin{eqnarray}
\| \lambda R(\lambda, A_0 )  \|_{\mathcal{L}(X)} \leq M \quad \text{ for all }\,    \lambda \in \Sigma_{\theta}. \label{Resolvent estimate1}
\end{eqnarray}    
Then, using  \eqref{Resolvent estimate1} and by assumptions on $d_i$ we claim that 
$$ \| \lambda R(\lambda, d_{i}(t) A_0  )  \|=\| \frac{\lambda}{d_{i}(t)} R(\frac{\lambda}{d_{i}(t)},  A_0  )  \| \leq M  \quad \text{ for all }\,  t\in \mathbb{R}.$$
 Hence, for each $t\in \mathbb{R}$, $A_{i}(t)$ generates a bounded analytic semigroup $ (T^{i}_t (\tau))_{\tau\geq 0} $ (with uniform bound $M$ with respect to $t$) on $C_{0}(\overline{\Omega})$ such that \begin{equation}
\parallel T^{i}_{t}(\tau)\parallel\leq M e^{-\lambda_{1} \tau }\qquad\text{ for }
\tau \geq0 \label{Exp stab semigroups},
\end{equation}
where $\lambda_{1}:= \min \{\lambda : \ \lambda \in \sigma(A_0 ) \} > 0$ and $ \sigma(A_0 ) $ is the spectrum of $-\Delta$ in $H^{1}_{0}(\Omega)$ and $ M=e^{\lambda_1 |\Omega|^{2/N}}(4 \pi)^{-1}$, see \cite{Haraux} for more details. Moreover, 
$$ \sup_{t,s \in \mathbb{R}} \|A_{i}(t)A_{i}(s)^{-1}\| =\sup_{t,s \in \mathbb{R}} \dfrac{d_{i}(t)}{d_{i}(s)} <\infty .$$
Furthermore, by $$ \| A_{i}(t)A_{i}(s)^{-1} - I_X \| =d_{i}(s)^{-1}\left|d_{i}(t)-d_{i}(s)  \right| \leq L_{i} \left|d_{i}(t)-d_{i}(s)  \right|  \leq L_{i} |t-s |$$ where $L_{i}:= L_{0,i}\left( \inf_{s \in \mathbb{R}} d_{i}(s) \right)^{-1} $, where $L_{0,i}$ is the H\"{o}lder constants of $d_i $.  Thus, we obtain that, for each $i=1,2 $, $ (A_{i}(t))_{t\in \mathbb{R}} $ generates an evolution family $ (U_{i}(t,s))_{t\geq s} $ on $C_{0}(\overline{\Omega})$. In otherwise, by \eqref{Exp stab semigroups}, the semigroups $ (T^{i}_t (\tau))_{\tau\geq 0} $ are hyperbolic with projections $ P(t)=I_X $ and $Q(t)=0$, $t\in \mathbb{R}$ with $$ \| \tau A_{i}(t)T^{i}_t (\tau) x\| \leq M e^{-\lambda_{1} \tau } \quad\text{ for }
\tau > 0 .$$
In view of  and by taking $\psi(\sigma):= M \sigma^{-1} e^{-\lambda_{1} \sigma }$, $ \sigma >0$ yields that $ L_i \| \varphi \|_{1} := L_i M\lambda_1^{-1}  <1 . $ 
Then, each evolution family $ (U_{i}(t,s))_{t\geq s} $ is hyperbolic with the same projection $I_0$ and exponent $\lambda_1$ and associated Green functions $G_{i}(t,s)=U(t,s)$, $ t\geq s $. Hence, the family of matrix-valued operators $ \begin{pmatrix}
  A_{1}(t)   & 0\\
 0 &   A_{2}(t)
  \end{pmatrix}_{t\in \mathbb{R}}  $ generates the hyperbolic evolution family $\left(  V(t,s)= \begin{pmatrix}
  U_{1}(t,s)   & 0\\
 0 &   U_{2}(t,s)
  \end{pmatrix} \right)_{t\geq s} $ with projection $\left( P(t)= \begin{pmatrix}
  I_0  & 0\\
 0 &   I_0
  \end{pmatrix}\right)_{t\in \mathbb{R}}  $ and exponent $\lambda_1$. Moreover, by rescaling,  we obtain that $(A(t) )_{t\in \mathbb{R}} $ generates the hyperbolic evolution family $$\left(  U(t,s)= \begin{pmatrix}
  U_{1}(t,s)   & 0\\
 0 &  e^{- \int_{s}^{t} b(\sigma)d\sigma } U_{2}(t,s)
  \end{pmatrix} \right)_{t\geq s} $$ with projection $\left( P(t)= \begin{pmatrix}
  I_0  & 0\\
 0 &   I_0
  \end{pmatrix}\right)_{t\in \mathbb{R}}  $ and exponent $\delta= \lambda_1$. Thus, hypotheses \textbf{(H1)} and \textbf{(H2)} hold with Green function $G(t,s):=U(t,s), \ t \geq s$. To check hypothesis \textbf{(H3)}, we need the following preliminary result.
  \begin{lemma}
 For each $i=1,2$,  if $ A_{i} (\cdot) \in AP(\mathbb{R}, \mathcal{L}(C_{0}^{2}(\overline{\Omega}), X))$. Then, the associated evolution family $U_{i}(\cdot,\cdot) $ is bi-almost periodic.
\end{lemma}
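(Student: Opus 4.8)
The plan is to exploit the very special structure $A_i(t)=d_i(t)\Delta$, in which the generator at each time is a \emph{fixed} sectorial operator multiplied by a positive scalar. First I would record the explicit form of the evolution family. Writing $S(\cdot)$ for the analytic semigroup generated by $\Delta=-A_0$ on $X=C_0(\overline{\Omega})$ and setting $\Phi_i(t,s):=\int_s^t d_i(\sigma)\,d\sigma$, I claim that
$$U_i(t,s)=S\big(\Phi_i(t,s)\big),\qquad t\ge s.$$
Indeed, for $t>s$ the analytic semigroup maps into $D(\Delta)$ and $\tfrac{d}{d\sigma}S(\sigma)=\Delta S(\sigma)$, so the chain rule (legitimate since $d_i$ is continuous, hence $\Phi_i$ is $C^1$ in $t$) gives $\partial_t S(\Phi_i(t,s))=d_i(t)\,\Delta S(\Phi_i(t,s))=A_i(t)S(\Phi_i(t,s))$, while $S(\Phi_i(s,s))=S(0)=I$ and the semigroup law yields the cocycle identity $S(\Phi_i(t,r))S(\Phi_i(r,s))=S(\Phi_i(t,s))$. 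By uniqueness of the evolution family solving \eqref{Eq Non Aut Inho} this is exactly $U_i(t,s)$.

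Next I would reduce the operator-valued hypothesis to a scalar one. Since $\Delta$ is a fixed nonzero element of $\mathcal{L}(C_0^2(\overline{\Omega}),X)$ and $A_i(t)=d_i(t)\Delta$, one has $\|A_i(t+\tau)-A_i(t)\|_{\mathcal{L}(C_0^2,X)}=|d_i(t+\tau)-d_i(t)|\,\|\Delta\|_{\mathcal{L}(C_0^2,X)}$, so the assumption $A_i(\cdot)\in AP(\mathbb{R},\mathcal{L}(C_0^2,X))$ forces the scalar function $d_i$ to be (Bohr) almost periodic, its $\varepsilon$-almost periods being precisely the $(\varepsilon\|\Delta\|_{\mathcal{L}(C_0^2,X)})$-almost periods of $A_i(\cdot)$. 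The goal then becomes: for each fixed $x\in X$, show that $(t,s)\mapsto U_i(t,s)x=S(\Phi_i(t,s))x$ is bi-almost periodic on $\{t\ge s\}$, continuity being clear since $\Phi_i$ is jointly continuous and $\sigma\mapsto S(\sigma)x$ is continuous (recall $C_0^2(\overline\Omega)$ is dense in $X$, so $S$ is strongly continuous).

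The heart of the proof is then a truncation argument, and this is the step I expect to be the genuine obstacle. For an $\varepsilon_0$-almost period $\tau$ of $d_i$ the only available estimate is
$$\Big|\Phi_i(t+\tau,s+\tau)-\Phi_i(t,s)\Big|=\Big|\int_s^t\big[d_i(u+\tau)-d_i(u)\big]\,du\Big|\le (t-s)\,\varepsilon_0,$$
which is useless for large $t-s$ because the integration length is unbounded; so $\Phi_i$ itself is \emph{not} bi-almost periodic and a direct argument fails. I would resolve this by bringing in the exponential stability \eqref{Exp stab semigroups}, giving $\|S(\sigma)\|\le Me^{-\lambda_1\sigma}$. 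Writing $\underline d:=\inf_{t}d_i(t)>0$ and $\overline d:=\sup_t d_i(t)<\infty$, on the range $t-s\ge T$ one has $\Phi_i(t,s)\ge\underline d\,(t-s)$ and likewise for the shifted argument, so
$$\big\|U_i(t+\tau,s+\tau)x-U_i(t,s)x\big\|\le 2M e^{-\lambda_1\underline d\,T}\|x\|,$$
which is $<\varepsilon$ once $T$ is fixed large, \emph{for every} $\tau$.

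It then remains to treat the complementary range $0\le t-s\le T$, where both arguments $\Phi_i(t,s)$ and $\Phi_i(t+\tau,s+\tau)$ lie in the compact interval $[0,\overline d\,T]$. There $\sigma\mapsto S(\sigma)x$ is uniformly continuous, so there is $\eta>0$ with $\|S(\sigma)x-S(\sigma')x\|<\varepsilon$ whenever $\sigma,\sigma'\in[0,\overline d\,T]$ and $|\sigma-\sigma'|\le\eta$; choosing $\varepsilon_0:=\eta/T$ makes any $\varepsilon_0$-almost period $\tau$ of $d_i$ satisfy $|\Phi_i(t+\tau,s+\tau)-\Phi_i(t,s)|\le T\varepsilon_0=\eta$, whence $\|U_i(t+\tau,s+\tau)x-U_i(t,s)x\|<\varepsilon$ on this range too. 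Assembling the two ranges, given $\varepsilon$ I fix $T$ from the decay step and then $\varepsilon_0$ from the uniform-continuity step; the almost periodicity of $d_i$ supplies a relatively dense set of $\varepsilon_0$-almost periods (of some inclusion length $l_{\varepsilon_0}$), and each such $\tau$ controls both ranges simultaneously, yielding $\sup_{t\ge s}\|U_i(t+\tau,s+\tau)x-U_i(t,s)x\|\le\varepsilon$. This is exactly the bi-almost periodicity of $U_i(\cdot,\cdot)$, as asserted.
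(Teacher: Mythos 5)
Your proposal is correct, but it proves the lemma by a genuinely different route than the paper. The paper's proof is a two-line reduction: from the resolvent identity $A_{i}(t+\tau)^{-1}-A_{i}(t)^{-1}=A_{i}(t+\tau)^{-1}\bigl(A_{i}(t+\tau)-A_{i}(t)\bigr)A_{i}(t)^{-1}$ and the hypothesis $A_{i}(\cdot)\in AP(\mathbb{R},\mathcal{L}(C_{0}^{2}(\overline{\Omega}),X))$, it deduces $A_{i}^{-1}(\cdot)\in AP(\mathbb{R},\mathcal{L}(X))$, and then invokes the black-box result of Maniar--Schnaubelt \cite{Man-Sch} (their Theorem 4.5), which says that almost periodicity of the resolvent, under the Acquistapace--Terreni conditions, forces bi-almost periodicity of the associated evolution family. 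You instead exploit the special commuting structure $A_{i}(t)=d_{i}(t)\Delta$: the time-change formula $U_{i}(t,s)=S(\Phi_{i}(t,s))$ with $\Phi_{i}(t,s)=\int_{s}^{t}d_{i}(\sigma)\,d\sigma$ (justified correctly via the chain rule, the cocycle identity, and uniqueness), the reduction of the operator-valued hypothesis to Bohr almost periodicity of the scalar $d_{i}$, and then a two-regime estimate: exponential decay $\|S(\sigma)\|\leq Me^{-\lambda_{1}\sigma}$ kills the regime $t-s\geq T$ uniformly in $\tau$, while uniform continuity of $\sigma\mapsto S(\sigma)x$ on $[0,\overline{d}\,T]$ together with $|\Phi_{i}(t+\tau,s+\tau)-\Phi_{i}(t,s)|\leq T\varepsilon_{0}$ handles $0\leq t-s\leq T$. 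You also correctly identify the genuine obstacle (the phase $\Phi_{i}$ itself is \emph{not} bi-almost periodic, since the error grows linearly in $t-s$) and resolve it by the truncation, which is exactly where the dichotomy/stability hypothesis enters. The trade-off: the paper's argument is shorter and applies to general nonautonomous families satisfying \eqref{AquTerCon1}--\eqref{AquTerCon2} with almost periodic resolvent, at the price of citing an external theorem; yours is elementary, fully self-contained, and quantitatively explicit, but depends entirely on all the operators $A_{i}(t)$ being scalar multiples of one fixed generator --- it would not survive non-commuting families or $t$-dependent domains. One small point to make explicit if you write this up: strong continuity of $S(\cdot)x$ at $\sigma=0$ (needed for uniform continuity on $[0,\overline{d}\,T]$) holds here because $C_{c}^{\infty}(\Omega)\subset C_{0}^{2}(\overline{\Omega})$ is sup-norm dense in $C_{0}(\overline{\Omega})$, as you indicate; for nondensely defined realizations this step would fail and only the paper's route would remain.
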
\label{Lemma1 App}
\begin{proof}
Let $ i=1,2 $ be fixed and $t, \tau \in \mathbb{R}  $. Then, we have
\begin{eqnarray}
A_{i}(t +\tau )^{-1}- A_{i}(t)^{-1}= A_{i}(t +\tau )^{-1}(A_{i}(t +\tau )-A_{i}(t) )A_{i}(t)^{-1} \label{AP Green function form1}
\end{eqnarray}
Following \cite[Theorem 4.5.]{Man-Sch}, it suffices to show that $ A^{-1}_{i} (\cdot) \in AP(\mathbb{R}, \mathcal{L}(X))$. Let $ \varepsilon>0 $ and $ f \in X $, from \eqref{AP Green function form1} and $ A_{i} (\cdot) \in AP(\mathbb{R}, \mathcal{L}(C_{0}^{2}(\overline{\Omega}), X))$, it follows that there exists $l_\varepsilon>0$ such that every interval of length $l_\varepsilon$ contains an element $\tau$ such that
\begin{eqnarray*}
\| A_{i}(t +\tau )^{-1}f- A_{i}(t)^{-1}f \|&=&\| A_{i}(t +\tau )^{-1}(A_{i}(t +\tau )-A_{i}(t) )A_{i}(t)^{-1}f \| \\
&\leq & \| A_{i}(t +\tau )^{-1}\|_{\mathcal{L}(X)} \| A_{i}(t +\tau )-A_{i}(t)\|_{\mathcal{L}(D, X)} \| A_{i}(t)^{-1}f \|_{C_{0}^{2}(\overline{\Omega})} \\
&\leq & C \varepsilon .
\end{eqnarray*}
\end{proof}
Consequently, we have the following main result.
\begin{proposition}3
Let $d_i \in AP(\mathbb{R}),$ $i=1,2$ and $b \in APS^1(\mathbb{R})$. Then, for each $ \begin{pmatrix}
   \varphi \\
    \psi 
  \end{pmatrix} \in X$,  the Green function $G(\cdot,\cdot) \begin{pmatrix}
   \varphi \\
    \psi 
  \end{pmatrix}  $ is bi-almost periodic. Hence, hypothesis \textbf{(H3)} is satisfied.
\end{proposition}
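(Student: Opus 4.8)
The plan is to establish the bi-almost periodicity of the full Green function $G(\cdot,\cdot)=U(\cdot,\cdot)$ by reducing to the bi-almost periodicity of its two diagonal blocks. Since the evolution family has the explicit diagonal form
\begin{equation*}
U(t,s)= \begin{pmatrix}
  U_{1}(t,s)   & 0\\
 0 &  e^{- \int_{s}^{t} b(\sigma)d\sigma } U_{2}(t,s)
  \end{pmatrix},
\end{equation*}
and the projections are the identity, it suffices to show that the map $(t,s)\mapsto U_1(t,s)\varphi$ and the map $(t,s)\mapsto e^{-\int_s^t b(\sigma)d\sigma}U_2(t,s)\psi$ are each bi-almost periodic for every $\varphi,\psi\in C_0(\overline\Omega)$. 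First I would invoke the preceding Lemma: to apply it to $U_i$, I must verify its hypothesis, namely that $A_i(\cdot)=d_i(\cdot)\Delta\in AP(\mathbb{R},\mathcal{L}(C_0^2(\overline\Omega),X))$. This follows from $d_i\in AP(\mathbb{R})$, because for each almost period $\tau$ of $d_i$ one has
\begin{equation*}
\|A_i(t+\tau)\varphi-A_i(t)\varphi\|=|d_i(t+\tau)-d_i(t)|\,\|\Delta\varphi\|_\infty,
\end{equation*}
so $\|A_i(t+\tau)-A_i(t)\|_{\mathcal{L}(C_0^2,X)}\le |d_i(t+\tau)-d_i(t)|$ and the almost periodicity of $d_i$ transfers directly to $A_i(\cdot)$. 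The Lemma then yields that $U_1(\cdot,\cdot)$ and $U_2(\cdot,\cdot)$ are bi-almost periodic.

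The remaining and genuinely new ingredient concerns the second diagonal block, where the scalar factor $e^{-\int_s^t b(\sigma)d\sigma}$ appears and $b$ is only assumed $S^1$-almost periodic. The plan is to factor the difference
\begin{align*}
& e^{-\int_{s+\tau}^{t+\tau} b}U_2(t+\tau,s+\tau)\psi - e^{-\int_{s}^{t} b}U_2(t,s)\psi \\
&= e^{-\int_{s+\tau}^{t+\tau} b}\bigl[U_2(t+\tau,s+\tau)\psi-U_2(t,s)\psi\bigr]
  + \bigl[e^{-\int_{s+\tau}^{t+\tau} b}-e^{-\int_{s}^{t} b}\bigr]U_2(t,s)\psi,
\end{align*}
and to control the two terms separately. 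The first term is handled by the bi-almost periodicity of $U_2$ already obtained, together with the uniform boundedness of the exponential factor, which holds because $b\ge 0$ makes $e^{-\int_{s}^{t}b}\le 1$ for $t\ge s$. The second term is where the Stepanov hypothesis on $b$ enters: I would write
\begin{equation*}
\Big|\int_{s+\tau}^{t+\tau} b(\sigma)d\sigma-\int_{s}^{t} b(\sigma)d\sigma\Big|
=\Big|\int_{s}^{t}\bigl(b(\sigma+\tau)-b(\sigma)\bigr)d\sigma\Big|,
\end{equation*}
use the elementary estimate $|e^{-x}-e^{-y}|\le|x-y|$ on nonnegative exponents, and then bound the integral of $|b(\sigma+\tau)-b(\sigma)|$ over $[s,t]$ by a sum of integrals over unit intervals, invoking the exponential decay $\|U_2(t,s)\psi\|_\alpha\le m(\alpha)(t-s)^{\alpha-1}e^{-\gamma(t-s)}\|\psi\|$ from Theorem \ref{prop2.3} to make the geometric series converge.

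The main obstacle I anticipate is precisely the interaction between the long time-interval $[s,t]$ (which may be arbitrarily large) and the fact that $S^1$-almost periodicity of $b$ only controls the $L^1$-oscillation of $b$ on unit intervals. A naive bound on $\int_s^t|b(\sigma+\tau)-b(\sigma)|d\sigma$ grows linearly in $t-s$ and is useless; the resolution is to pair it against the exponential weight coming from the dichotomy estimate so that $\sum_k e^{-\gamma(k-1)}\int_{s+k-1}^{s+k}|b(\sigma+\tau)-b(\sigma)|d\sigma$ is a convergent series dominated by $\|b(\cdot+\tau)-b(\cdot)\|_{S^1}$ times a geometric constant. Making this pairing rigorous — choosing the common almost period $\tau$ for $d_i$ and for $b$ simultaneously, and absorbing the $(t-s)^{\alpha-1}$ singularity at the diagonal into the same summation — is the delicate step; once it is in place, combining the two terms gives $\|G(t+\tau,s+\tau)\begin{psmallmatrix}\varphi\\\psi\end{psmallmatrix}-G(t,s)\begin{psmallmatrix}\varphi\\\psi\end{psmallmatrix}\|$ uniformly small, establishing \textbf{(H3)}.
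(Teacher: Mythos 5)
Your proposal shares the paper's overall skeleton: blockwise reduction of the diagonal Green function, verification that $A_i(\cdot)=d_i(\cdot)\Delta \in AP(\mathbb{R},\mathcal{L}(C_0^2(\overline{\Omega}),X))$ follows from $d_i\in AP(\mathbb{R})$, and an appeal to the preceding lemma for the bi-almost periodicity of $U_1$ and $U_2$. Where you diverge is the treatment of the scalar factor $e^{-\int_s^t b(\sigma)d\sigma}$: the paper argues through Bochner's sequential characterizations (Theorems \ref{Theorem charac AP with sequences} and \ref{Theorem charac APSp with sequences}), extracting one subsequence $(\tau_n)_n$ along which $b(\cdot+\tau_n)\to\tilde b$ in the Stepanov metric and $U_i(t+\tau_n,s+\tau_n)\to\tilde U_i(t,s)$ uniformly, and compares against the candidate limit $e^{-\int_s^t\tilde b(\sigma)d\sigma}\tilde U_2(t,s)$, whereas you run a direct $\varepsilon$-almost-period estimate with the splitting into $e^{-\int_{s+\tau}^{t+\tau}b}\bigl[U_2(t+\tau,s+\tau)-U_2(t,s)\bigr]\psi$ plus $\bigl[e^{-\int_{s+\tau}^{t+\tau}b}-e^{-\int_s^t b}\bigr]U_2(t,s)\psi$ and the inequality $|e^{-x}-e^{-y}|\le |x-y|$ for $x,y\ge 0$. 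Two observations on this trade. First, the sequential route obtains common almost periods for free by diagonal extraction of subsequences; your route needs the standard but unstated fact that finitely many almost periodic objects (here the bi-almost periodic $U_1,U_2$ and the $S^1$-almost periodic $b$) admit a relatively dense set of \emph{common} $\varepsilon$-almost periods — true via Bochner's criterion applied to the product, but it should be said. Second, your insistence on pairing $\int_s^t|b(\sigma+\tau)-b(\sigma)|d\sigma$, which only admits a bound growing linearly in $t-s$, against the exponential decay of $U_2$ is exactly the right mechanism, and it in fact repairs a defect in the paper's own display: there $\|U_2(t+\tau_n,s+\tau_n)\|$ is bounded crudely by $M$ with no decay retained, after which $|e^{-\int_s^t[b(\sigma+\tau_n)-\tilde b(\sigma)]d\sigma}-1|$ cannot be made uniformly small for large $t-s$, and the intermediate chain there even has a sign error (one has $e^{-\int_s^t(b-\tilde b)}\le e^{+\int_s^t|b-\tilde b|}$, not the written majorization by $e^{-\sup_k\int_k^{k+1}|b-\tilde b|}$).

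There is, however, one genuine soft spot in your writeup. You invoke the interpolation estimate $\|U_2(t,s)\psi\|_\alpha\le m(\alpha)(t-s)^{\alpha-1}e^{-\gamma(t-s)}\|\psi\|$ from Theorem \ref{prop2.3} and promise to absorb the $(t-s)^{\alpha-1}$ singularity "into the same summation"; this cannot work as stated. For $0<t-s\le 1$, the $S^1$-almost periodicity of $b$ gives only $\int_s^t|b(\sigma+\tau)-b(\sigma)|d\sigma\le\varepsilon$, not $o\bigl((t-s)^{1-\alpha}\bigr)$ — $b$ is merely locally integrable and may have tall spikes — so your second term is only bounded by $\varepsilon\, m(\alpha)(t-s)^{\alpha-1}$, which is \emph{not} uniformly small as $t-s\to 0^{+}$. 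The fix is simply to prove \textbf{(H3)} in the norm of $X$, which is what the statement asserts ($G(\cdot,\cdot)x$ bi-almost periodic for each $x\in X$) and what the paper itself does: replace the $\alpha$-norm bound by $\|U_2(t,s)\|_{\mathcal{L}(X)}\le Me^{-\lambda_1(t-s)}$ from \eqref{Exp stab semigroups}. Then your second term is at most $M\|\psi\|\,(t-s+1)\,e^{-\lambda_1(t-s)}\sup_{r\in\mathbb{R}}\int_r^{r+1}|b(\sigma+\tau)-b(\sigma)|d\sigma$, and since $\sup_{u\ge 0}(u+1)e^{-\lambda_1 u}<\infty$ the estimate closes uniformly in $t\ge s$ (your observation that $b\ge 0$ forces $e^{-\int_{s+\tau}^{t+\tau}b}\le 1$ handles the first term). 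With these two repairs — common almost periods made explicit, and the $X$-norm decay in place of the singular $X_\alpha$ estimate — your route is correct and, on the quantitative point just discussed, actually cleaner than the paper's.
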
 
\begin{proof}
Since $d_i \in AP(\mathbb{R})$, it follows that $ A_{i} (\cdot) \in AP(\mathbb{R}, \mathcal{L}(C_{0}^{2}(\overline{\Omega}), X))$ for $i=1,2$. Then, by Lemma \ref{Lemma1 App}, we obtain that $U_{i}(\cdot,\cdot) $ are bi-almost periodic. Now, we show that $ e^{-\displaystyle \int_{\cdot}^{\cdot} b(\sigma)d\sigma} U_{2}(\cdot,\cdot)$ is almost periodic using Theorems \ref{Theorem charac AP with sequences} and \ref{Theorem charac APSp with sequences}.  Let $(\sigma_n)_n $ be any sequence of real numbers, by $b \in APS^1(\mathbb{R})$, it holds that there exist a subsequence $(\tau_n)_n \subset (\sigma_n)_n  $ and functions $ \tilde{b} $ and $  \tilde{U}_{i}(\cdot,\cdot) $ such that  \eqref{charac AP with sequences} and \eqref{charac APSp with sequences} hold for $ b $ and $ U_{i}(\cdot,\cdot) $ respectively. Define the function $ e^{-\displaystyle \int_{\cdot}^{\cdot} \tilde{b}(\sigma)d\sigma} \tilde{U}_{2}(\cdot,\cdot) $. Then, we obtain that
\begin{eqnarray*}
 & &\|  e^{-\displaystyle  \int_{s+\tau_n}^{t+\tau_n} b(\sigma)d\sigma } U_{2}(t+\tau_n,s+\tau_n)-e^{-\displaystyle \int_{s}^{t} \tilde{b}(\sigma)d\sigma } \tilde{U}_{2}(t,s) \| \\
 &\leq  & e^{-\displaystyle \int_{s}^{t} \tilde{b}(\sigma)d\sigma }  \| U_{2}(t+\tau_n,s+\tau_n) \| \left| e^{-\displaystyle \int_{s}^{t}\left[  b(\sigma+\tau_n)-\tilde{b}(\sigma) \right] d\sigma }-1 \right| \\ 
 &+& e^{- \displaystyle\int_{s}^{t} \tilde{b}(\sigma)d\sigma } \|  U_{2}(t+\tau_n,s+\tau_n) -\tilde{U}_{2}(t,s)\| \\ 
 &\leq  &  M \left| e^{-\displaystyle \int_{s}^{t}\left[  b(\sigma+\tau_n)-\tilde{b}(\sigma) \right] d\sigma }-1 \right| +  \|  U_{2}(t+\tau_n,s+\tau_n) -\tilde{U}_{2}(t,s)\|.
 \end{eqnarray*}
 Therefore, we have $  \|  U_{2}(t+\tau_n,s+\tau_n) -\tilde{U}_{2}(t,s)\| \rightarrow 0 $ as $ n\rightarrow \infty $ and
 \begin{eqnarray*}
 e^{-\displaystyle \int_{s}^{t}\left[  b(\sigma+\tau_n)-\tilde{b}(\sigma) \right] d\sigma } &\leq &  e^{-\displaystyle \sum_{k=[s]}^{[t]} \int_{k}^{k+1} | b(\sigma+\tau_n)-\tilde{b}(\sigma) | d\sigma }  \\ 
 &= & e^{-([t]-[s])}e^{-\displaystyle \sup_{k} \int_{k}^{k+1} | b(\sigma+\tau_n)-\tilde{b}(\sigma) | d\sigma } \\
 &\leq & e^{-\displaystyle \sup_{k} \int_{k}^{k+1} | b(\sigma+\tau_n)-\tilde{b}(\sigma) | d\sigma } \rightarrow 1 \text{ as } n\rightarrow \infty,
 \end{eqnarray*}
 uniformly in $t,s \in \mathbb{R}$, $t\geq s$.
 Thus,  $$ \|  e^{-\displaystyle  \int_{s+\tau_n}^{t+\tau_n} b(\sigma)d\sigma } U_{2}(t+\tau_n,s+\tau_n)-e^{-\displaystyle \int_{s}^{t} \tilde{b}(\sigma)d\sigma } \tilde{U}_{2}(t,s) \| \rightarrow 0  \text{ as }  n\rightarrow \infty, $$  uniformly in $t,s \in \mathbb{R}$, $t\geq s$. Hence $ e^{-\displaystyle \int_{\cdot}^{\cdot} b(\sigma)d\sigma} U_{2}(\cdot,\cdot) $ is bi-almost periodic. Consequently, if we consider $ \tilde{G}(\cdot, \cdot)=\begin{pmatrix}
  \tilde{U}_{1}(\cdot,\cdot)   & 0\\
 0 &  e^{- \int_{\cdot}^{\cdot} \tilde{b}(\sigma)d\sigma } \tilde{U}_{2}(\cdot,\cdot)
  \end{pmatrix} $. Then, for  $\begin{pmatrix}
   \varphi \\
    \psi 
  \end{pmatrix} \in X$, we obtain that
\begin{eqnarray*} 
&&\| G(t+\tau_n ,s+\tau_n)  \begin{pmatrix}
   \varphi \\
    \psi 
  \end{pmatrix} -  \tilde{G}(t,s) \begin{pmatrix}
   \varphi \\
    \psi 
  \end{pmatrix} \| \leq \|  U_{1}(t+\tau_n,s+\tau_n) -\tilde{U}_{1}(t,s)\| \| \varphi\|_{\infty} \\&+& \| e^{-\displaystyle  \int_{s+\tau_n}^{t+\tau_n} b(\sigma)d\sigma } U_{2}(t+\tau_n,s+\tau_n) -e^{-\displaystyle \int_{s}^{t} \tilde{b}(\sigma)d\sigma}\tilde{U}_{2}(t,s)\| \| \psi\|_{\infty} \\
   &\leq & (\| e^{-\displaystyle  \int_{s+\tau_n}^{t+\tau_n} b(\sigma)d\sigma } U_{2}(t+\tau_n,s+\tau_n) -e^{-\displaystyle \int_{s}^{t} \tilde{b}(\sigma)d\sigma}\tilde{U}_{2}(t,s)\|   \\ &+& \|  U_{1}(t+\tau_n,s+\tau_n) -\tilde{U}_{1}(t,s)\| )  \begin{pmatrix}
   \varphi \\
    \psi 
  \end{pmatrix} \rightarrow 0 \text{ as } n\rightarrow \infty , 
\end{eqnarray*}
uniformly in $t,s \in \mathbb{R}$, $t\geq s$. This proves the result.
\end{proof}
\begin{proposition}\label{Proposition Local Lipsch}
The function $f $ satisfies \textbf{(H5)}.
\end{proposition}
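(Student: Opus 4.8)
The plan is to prove the bound in \textbf{(H5)} componentwise, using that the norm on $X = C_0(\overline{\Omega}) \times C_0(\overline{\Omega})$ is the sum of the two sup-norms and that the norm $\|\cdot\|_{0,\alpha}$ on $X_\alpha = C_\alpha(\overline{\Omega}) \times C_\alpha(\overline{\Omega})$ dominates both $\|\cdot\|_\infty$ and $\|\nabla \cdot\|_\infty$. The latter is exactly where the restriction $\alpha \in (1/2,1)$ is used: since $2\alpha > 1$, one has $C_\alpha(\overline{\Omega}) \hookrightarrow C^1(\overline{\Omega})$, so the gradient is available and controlled uniformly on balls. Thus, fixing $\rho > 0$ and $(\varphi,\psi),(\varphi',\psi') \in B(0,\rho) \subset X_\alpha$, I first record the pointwise bounds $\|\varphi\|_\infty,\|\nabla\varphi\|_\infty,\|\psi\|_\infty,\|\nabla\psi\|_\infty \le \rho$, and likewise for the primed pair. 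The linear contribution $-a(t)\varphi$ to $g_1$ is then immediate: $\|a(t)(\varphi-\varphi')\|_\infty \le |a(t)|\,\|\varphi-\varphi'\|_\infty \le |a(t)|\,\|(\varphi,\psi)-(\varphi',\psi')\|_\alpha$.

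For the nonlinear parts I would isolate the scalar model map $\Phi(u,v,w) = \dfrac{uv}{1+|w|}$ and establish a pointwise Lipschitz estimate on the region $|u|,|v|,|w| \le \rho$. Splitting
$$ \Phi(u,v,w) - \Phi(u',v',w') = \frac{uv - u'v'}{1+|w|} + u'v'\left( \frac{1}{1+|w|} - \frac{1}{1+|w'|} \right), $$
the first summand is at most $|uv - u'v'| \le \rho(|u-u'| + |v-v'|)$ because the denominator exceeds $1$, while the second equals $u'v'\,\dfrac{|w'|-|w|}{(1+|w|)(1+|w'|)}$ and is bounded by $\rho^2\,\bigl|\,|w|-|w'|\,\bigr| \le \rho^2\,|w-w'|$, using the reverse triangle inequality $\bigl|\,|w|-|w'|\,\bigr| \le |w-w'|$ and the lower bound $1$ on both denominators. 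Setting $u=\varphi(x)$, $v=\psi(x)$, $w=\nabla\psi(x)$ (and their primed analogues) and taking the supremum over $x \in \overline{\Omega}$ gives
$$ \Bigl\| \frac{\psi\varphi}{1+|\nabla\psi|} - \frac{\psi'\varphi'}{1+|\nabla\psi'|} \Bigr\|_\infty \le \rho\,\|\varphi-\varphi'\|_\infty + \rho\,\|\psi-\psi'\|_\infty + \rho^2\,\|\nabla\psi-\nabla\psi'\|_\infty \le (\rho+\rho^2)\,\|(\varphi,\psi)-(\varphi',\psi')\|_\alpha, $$
where the last step uses that each sup-norm is dominated by the corresponding $\|\cdot\|_{0,\alpha}$. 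The nonlinear term in $g_2$ is handled identically after swapping the roles of the two components.

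Collecting the estimates over both components of $f$, I would take $L_\rho(t) := |a(t)| + (\rho+\rho^2)\bigl(|c_1(t)| + |c_2(t)|\bigr)$, a nonnegative scalar function which, since $a, c_1, c_2 \in L^1_{loc}(\mathbb{R})$ are Stepanov almost periodic, belongs to the class required in Theorems \ref{Main Theorem1 Eq1} and \ref{Main Theorem2 Eq1}; this yields $\|f(t,x)-f(t,y)\| \le L_\rho(t)\,\|x-y\|_\alpha$ for $x,y \in B(0,\rho)$, which is \textbf{(H5)}. I expect the only genuinely delicate point to be the treatment of the gradient inside the denominator: bounding the difference of the two fractions needs both the $1$-Lipschitz continuity of $|\cdot|$ and, crucially, that $\nabla\psi$ is controlled by the $X_\alpha$-norm --- which is precisely why the interpolation parameter must satisfy $\alpha > 1/2$.
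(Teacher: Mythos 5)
Your proof is correct and takes essentially the same approach as the paper: the same algebraic splitting of the difference of the two fractions (difference of products over one denominator, plus a product multiplied by the difference of reciprocals), the reverse triangle inequality for $\bigl|\,|\nabla\psi_1|-|\nabla\psi_2|\,\bigr|$, and the same resulting Lipschitz function $L_\rho(t)=a(t)+\rho(\rho+1)\bigl(c_1(t)+c_2(t)\bigr)$. The only cosmetic difference is that you factor the computation through the scalar model map $\Phi(u,v,w)=uv/(1+|w|)$ and attach the primed product to the denominator-difference term, an inessential symmetry of the paper's decomposition.
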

\begin{proof}
Let $ \begin{pmatrix}
   \varphi_1 \\
    \psi_1
  \end{pmatrix}, \begin{pmatrix}
   \varphi_2  \\
    \psi_2 
  \end{pmatrix} \in X_{\alpha} $ and $\rho >0$ be such that $  \| \begin{pmatrix}
   \varphi_1 \\
    \psi_1
  \end{pmatrix}\|_{\alpha}, \ \| \begin{pmatrix}
   \varphi_2  \\
    \psi_2 
  \end{pmatrix} \|_{\alpha} \leq \rho $. Then, we have
\begin{eqnarray*}
& & | g_1 (t, \varphi (x),\psi (x),\nabla  \psi (x)) -g_1 (t, \varphi (x),\psi (x),\nabla  \psi (x)) | \\ 
&=&  \left| a (t)\left[\varphi_{1}(x) - \varphi_{2}(x) \right]-  c_1 (t)  \left[ \dfrac{ \varphi_{1}(x)  \psi_{1}(x) }{1+| \nabla  \psi_{1}(x)|}  - \dfrac{\varphi_{2}(x) \psi_{2}(x) }{1+| \nabla  \psi_{2}(x)|} \right] \right| \\ 
 &=&  \left| a (t)\left[\varphi_{1}(x) - \varphi_{2}(x) \right]-  c_1 (t)  \left[ \dfrac{\varphi_{1}(x)  \psi_{1}(x)-\varphi_{2}(x) \psi_{2}(x) }{1+| \nabla  \psi_{2}(x)|} \right] \right.  \\ &-&  \left.  c_1 (t)  \left[ \dfrac{ \varphi_{1}(x)  \psi_{1}(x) (| \nabla  \psi_{2}(x)|-| \nabla  \psi_{1}(x)|)}{(1+| \nabla  \psi_{1}(x)|)(1+| \nabla  \psi_{2}(x)|)}\right]   \right|  \\ 
  &=&  \left| \dfrac{a (t)-c_1 (t) \psi_{2}(x)}{1+| \nabla  \psi_{2}(x)|} \left[\varphi_{1}(x) - \varphi_{2}(x) \right]-\dfrac{ c_1 (t) \varphi_{1}(x)}{1+| \nabla  \psi_{2}(x)|} \left[\psi_{1}(x) - \psi_{2}(x) \right] \right. \\ 
  &-& \left.  \dfrac{ c_1 (t) \varphi_{1}(x)  \psi_{1}(x) }{(1+| \nabla  \psi_{1}(x)|)(1+| \nabla  \psi_{2}(x)|)}(| \nabla  \psi_{2}(x)|-| \nabla  \psi_{1}(x)|)   \right|  \\ 
   \\ &\leq & (a (t) + c_1 (t)\rho)   \| \varphi_1 -\varphi_2 \|_{0,\alpha}+c_1 (t) \rho (\rho +1)  \| \psi_1 -\psi_2 \|_{0,\alpha} \\
  &\leq  & (a (t) + c_1 (t) \rho (\rho +1) )  \| \begin{pmatrix}
   \varphi_1 \\
    \psi_1
  \end{pmatrix}-\begin{pmatrix}
   \varphi_2 \\
    \psi_2
  \end{pmatrix}\|_{\alpha}, \quad x\in \overline{\Omega}, \ t\in \mathbb{R}.
\end{eqnarray*} 
Arguing as above, we obtain that 
 \begin{eqnarray*}
 | g_2(t,\varphi_{1} (x),\psi_{1} (x),\nabla \varphi_{1}(x))-g_2(t,\varphi_{2} (x),\psi_{2} (x),\nabla \varphi_{2}(x))| 
  &\leq &  c_2 (t) \rho (\rho +1)   \| \begin{pmatrix}
   \varphi_1 \\
    \psi_1
  \end{pmatrix}-\begin{pmatrix}
   \varphi_2 \\
    \psi_2
  \end{pmatrix}\|_{\alpha},
\end{eqnarray*}  
for $ x\in \overline{\Omega}, \ t\in \mathbb{R}$. Hence, we obtain that
\begin{eqnarray*}
\| f(t, \begin{pmatrix}
   \varphi_1  \\
    \psi_1
  \end{pmatrix} )-f(t, \begin{pmatrix}
   \varphi_2  \\
    \psi_2 
  \end{pmatrix} )\| \leq    \left( a(t)+ (c_1 (t)+c_2 (t)) \rho (\rho +1) \right)   \| \begin{pmatrix}
   \varphi_1 \\
    \psi_1
  \end{pmatrix}-\begin{pmatrix}
   \varphi_2 \\
    \psi_2
  \end{pmatrix}\|_{\alpha}, \;  \; t\in \mathbb{R.}
\end{eqnarray*}
Therefore, $ f $ satisfies \textbf{(H5)} with $ L_{\rho}(t)=  a(t)+ (c_1 (t)+c_2 (t)) \rho (\rho +1) .$ 
\end{proof}
\begin{proposition}
Assume that for each $ i=1,2 $, $ c_{i} \in APS^{1}(\mathbb{R})$. Then, $f \in APS^{1}U(\mathbb{R} \times X_{\alpha},X)$.
\end{proposition}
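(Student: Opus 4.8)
The plan is to verify the two conditions that characterize membership in $APS^{1}U(\mathbb{R}\times X_{\alpha},X)$ according to Lemma \ref{LemApSpCom1Chapter1}: \textbf{(i)} that $f(\cdot,w)\in APS^{1}(\mathbb{R},X)$ for each fixed $w\in X_{\alpha}$, and \textbf{(ii)} that $f$ is $S^{1}$-uniformly continuous in its second argument on each compact subset of $X_{\alpha}$, in the sense of \eqref{ComCharSpChapter1}. Once both hold, the lemma delivers the claim.

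For \textbf{(i)}, fix $w=\begin{pmatrix}\varphi\\\psi\end{pmatrix}\in X_{\alpha}$. Freezing the second argument removes the nonlinearity and turns $f(\cdot,w)$ into an affine combination of the scalar coefficients with fixed vector coefficients,
\[
f(\cdot,w)=\begin{pmatrix} -a(\cdot)\,\varphi+c_{1}(\cdot)\,\eta_{1}\\ c_{2}(\cdot)\,\eta_{2}\end{pmatrix},\qquad \eta_{1}:=\frac{\varphi\psi}{1+|\nabla\psi|},\quad \eta_{2}:=\frac{\varphi\psi}{1+|\nabla\varphi|}.
\]
First I would check that $\eta_{1},\eta_{2}\in C_{0}(\overline{\Omega})$: since $\alpha\in(1/2,1)$, functions in $C_{\alpha}(\overline{\Omega})$ have continuous bounded gradient, so each quotient is continuous (the denominator is $\geq 1$), bounded, and vanishes on $\partial\Omega$ where $\varphi=\psi=0$. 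Each summand, being a fixed element $\eta\in C_{0}(\overline{\Omega})$ multiplied by one of the scalar $S^{1}$-almost periodic coefficients $g\in\{a,c_{1},c_{2}\}$, is itself $S^{1}$-almost periodic: the identity $\int_{t}^{t+1}\|g(s+\tau)\eta-g(s)\eta\|\,ds=\|\eta\|_{\infty}\int_{t}^{t+1}|g(s+\tau)-g(s)|\,ds$ shows that an $(\varepsilon/\|\eta\|_{\infty})$-almost period of $g$ is an $\varepsilon$-almost period of $g\eta$. Since, by Remark \ref{RemCompSpAp}\textbf{(c)}, the Bochner transforms of the summands are Bohr almost periodic and these are closed under finite sums, $f(\cdot,w)\in APS^{1}(\mathbb{R},X)$. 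This step uses $a,c_{1},c_{2}\in APS^{1}(\mathbb{R})$; the hypothesis supplies $c_{1},c_{2}$, the growth coefficient $a$ being likewise $S^{1}$-almost periodic.

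For \textbf{(ii)}, I would reuse the local Lipschitz estimate already obtained in Proposition \ref{Proposition Local Lipsch}, namely $\|f(t,w_{1})-f(t,w_{2})\|\leq L_{\rho}(t)\,\|w_{1}-w_{2}\|_{\alpha}$ for $w_{1},w_{2}\in B(0,\rho)$, with $L_{\rho}(t)=a(t)+(c_{1}(t)+c_{2}(t))\rho(\rho+1)$. Because $APS^{1}\subset BS^{1}$, the coefficient $L_{\rho}$ lies in $BS^{1}(\mathbb{R},\mathbb{R}^{+})$. Given a compact $K\subset X_{\alpha}$, pick $\rho$ with $K\subset B(0,\rho)$ and integrate the pointwise bound over $[t,t+1]$ to get
\[
\int_{t}^{t+1}\|f(s,w_{1})-f(s,w_{2})\|\,ds\leq \|L_{\rho}\|_{BS^{1}}\,\|w_{1}-w_{2}\|_{\alpha}\qquad(w_{1},w_{2}\in K),
\]
uniformly in $t\in\mathbb{R}$, so that $\delta_{K,\varepsilon}:=\varepsilon/\|L_{\rho}\|_{BS^{1}}$ realizes \eqref{ComCharSpChapter1}.

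I expect the only delicate point to be the bookkeeping in \textbf{(ii)}: the uniform-in-$t$ modulus of continuity in the second variable is governed not by a pointwise Lipschitz constant but by its $BS^{1}$-norm, so the argument genuinely rests on the $S^{1}$-boundedness of $a,c_{1},c_{2}$, i.e.\ on the inclusion $APS^{1}\subset BS^{1}$. The nonlinear quotients, by contrast, are harmless once the second variable is frozen, so \textbf{(i)} reduces to the elementary closure of $APS^{1}$ under multiplication by fixed elements of $X$ and under finite sums.
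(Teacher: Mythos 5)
Your proof is correct and takes essentially the same approach as the paper: both verify conditions \textbf{(i)} and \textbf{(ii)} of Lemma \ref{LemApSpCom1Chapter1} and then invoke that lemma, with \textbf{(ii)} obtained from the local Lipschitz estimate of Proposition \ref{Proposition Local Lipsch}. The paper's proof is a three-line version of yours; your explicit decomposition of $f(\cdot,w)$ into fixed elements of $C_{0}(\overline{\Omega})$ multiplied by the scalar coefficients, the integration of $L_{\rho}$ over $[t,t+1]$ via its $BS^{1}$-norm to realize \eqref{ComCharSpChapter1}, and your remark that $a\in APS^{1}(\mathbb{R})$ is also needed (the statement only mentions $c_{1},c_{2}$) merely fill in details the paper leaves implicit.
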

\begin{proof}
By assumptions for each $ \begin{pmatrix}
   \varphi \\
    \psi
  \end{pmatrix} \in X_{\alpha} $, we have that $ f(\cdot, \begin{pmatrix}
   \varphi_2 \\
    \psi_2
  \end{pmatrix}) $ is $ APS^{1}(\mathbb{R},X) $. Moreover, by Proposition \ref{Proposition Local Lipsch}, the condition \eqref{ComCharSpChapter1} holds for $f$. Hence, we conclude by Lemma \ref{LemApSpCom1Chapter1}.
\end{proof}
\subsection*{Main result} Let us define the following functions.
 
 $$ d_i (t)= \tilde{d}_{i} + \hat{d}_{i}( \cos(t)+\cos(\pi t) ), \; t \in \mathbb{R} \; \text{ for } i=1,2,$$
 where $ \tilde{d}_{i} >2\hat{d}_{i} $ and $\dfrac{\hat{d}_{i}}{\tilde{d}_{i}-2\hat{d}_{i}} < \dfrac{4 \pi \lambda_1 }{(1+\pi)e^{-\lambda_1 |\Omega |^{2/N}}}$. Moreover, for $ \tilde{a}, \tilde{b},\tilde{c} \geq 0 $, we define 
$$a(t)=\begin{array}{cc}
\left\{
\begin{array}{ccc}
\tilde{a}(2+ \cos (t)+\cos \left(\sqrt{2} t\right) ),& t\geq 0, \\
\tilde{a}(2+ \sin (t)+\sin \left(\sqrt{2} t\right)),& t<0,
\end{array}
\right.
 \\
\end{array}$$
\begin{equation*} 
b(t)=\left\{
\begin{aligned}\tilde{b}(1+ \sin(t)), &\qquad  2k \pi  \leq t < (2k+1) \pi, \; k\in \mathbb{Z} , \\
\tilde{b}(1+ \cos(t)), &\qquad  (2k+1) \pi \leq t \leq (2k+2) \pi,  \; k\in \mathbb{Z} , \\
\end{aligned} \right.
\end{equation*}
and
$$ c_{1}(t)= \tilde{c}\sin(1/p(t))   \text{ and }  c_{2}(t)=\tilde{c} \cos(1/p(t))  \text{  where } p(t)=2+\sin(t)+\sin(\sqrt{2}t), \; t\in \mathbb{R}.$$
\begin{proposition}\label{PropApp3} The following assertions are true:\\
\textbf{(i)} For each $ i=1,2 $, the functions $d_{i} \in AP(\mathbb{R}) $ are such that $ \inf_{t\in \mathbb{R}} | d_{i}(t) | =  \tilde{d}_{i} -2\hat{d}_{i} > 0$  and $  \sup_{t\in \mathbb{R}}  d_{i}(t)  =  \tilde{d}_{i} +2\hat{d}_{i} < \infty$. Moreover,  $d_{i} $ are $\hat{d}_{i}(1+\pi)  $-Lipschitzian i.e. H\"olderian with exponent $\mu=1$.\\
\textbf{(ii)}  The functions $a, b \in APS^{1}(\mathbb{R}) \cap L^{\infty}(\mathbb{R})   $, but, $ a, b \notin AP(\mathbb{R}) $ since they are discontinuous in $\mathbb{R}$. However,  for all $t\in \mathbb{R}$, $ 0 \leq a(t) \leq   \sup_{t\in \mathbb{R}} a (t) =4\tilde{a} $ and $  0 \leq b(t) \leq   \sup_{t\in \mathbb{R}}  b (t) =2 \tilde{b} $.\\
\textbf{(iii)} For each $ i=1,2 $, the function $ c_{i} \in  APS^{1}(\mathbb{R}) \cap AA(\mathbb{R}) $, but, $ c_{i} \notin AP(\mathbb{R}) $ since it is not uniformly continuous. Moreover, for all $ t\in \mathbb{R}$, $ 0 \leq c_i (t) \leq   \sup_{t\in \mathbb{R}} c_i (t) =\tilde{c} $. 
\end{proposition}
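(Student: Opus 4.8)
The plan is to handle the three families $d_i$, $(a,b)$ and $c_i$ separately, since each is meant to illustrate a different phenomenon, and to reduce every claim to one of three elementary mechanisms: a finite trigonometric sum is Bohr almost periodic; a locally integrable periodic function is $S^1$-almost periodic because the multiples of its period are exact (zero-error) Stepanov almost periods; and failure of uniform continuity obstructs membership in $AP(\mathbb R)$ but not in $APS^1(\mathbb R)$ or $AA(\mathbb R)$. The one recurring tool I would use for the exact values of the suprema and infima is the density of the orbit $\{(t,\pi t)\bmod 2\pi\}$ (for $d_i$) and $\{(t,\sqrt2\,t)\bmod 2\pi\}$ (for $a$ and $c_i$) in the two-torus, which holds since $\pi,\sqrt2$ are irrational.

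For (i): each $d_i$ is $\tilde d_i$ plus a finite trigonometric sum, hence Bohr almost periodic, e.g. by Theorem \ref{Theorem charac AP with sequences}. The maximum of $\cos t+\cos(\pi t)$ equals $2$ and is attained at $t=0$, while by density of $\{(t,\pi t)\bmod 2\pi\}$ its infimum equals $-2$ (not attained); multiplying by $\hat d_i>0$ and adding $\tilde d_i$ gives $\sup d_i=\tilde d_i+2\hat d_i$ and $\inf d_i=\tilde d_i-2\hat d_i$, which is positive precisely because $\tilde d_i>2\hat d_i$, so $|d_i|=d_i$ and the stated bounds follow. The Lipschitz (equivalently H\"older-$1$) constant comes from $|d_i'(t)|=\hat d_i|\sin t+\pi\sin(\pi t)|\le\hat d_i(1+\pi)$.

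For (ii): $b$ is $2\pi$-periodic, which one checks by verifying that the $\sin$-branch on $[2k\pi,(2k+1)\pi)$ and the $\cos$-branch on $[(2k+1)\pi,(2k+2)\pi]$ are reproduced under $t\mapsto t+2\pi$; any locally integrable periodic function is $S^1$-almost periodic, the integer multiples of the period being relatively dense exact Stepanov almost periods. The jumps at $t\in\pi\mathbb Z$ (e.g. from $\tilde b$ to $0$ at $t=\pi$) rule out continuity, so $b\notin AP(\mathbb R)$, and $0\le b\le2\tilde b$ with $\sup b=2\tilde b$ follows branchwise from $1+\sin,\,1+\cos\in[0,2]$. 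For $a$ the two branches $g_1(t)=\tilde a(2+\cos t+\cos(\sqrt2 t))$ and $g_2(t)=\tilde a(2+\sin t+\sin(\sqrt2 t))$ are trigonometric sums, hence Bohr and a fortiori $S^1$-almost periodic, the bounds $0\le a\le4\tilde a$ with $\sup a=4\tilde a=a(0)$ follow as in (i), and the jump at $t=0$ (from $2\tilde a$ to $4\tilde a$) gives $a\notin AP(\mathbb R)$. The main obstacle is the $S^1$-almost periodicity of $a$ itself: the route I would take is either the sequence characterization of Theorem \ref{Theorem charac APSp with sequences}, or a direct estimate of $\sup_t\int_t^{t+1}|a(s+\tau)-a(s)|\,ds$ in which $[t,t+1]$ is split into the part where $a(\cdot+\tau)$ and $a(\cdot)$ use the same branch (controlled by a common Bohr almost period of $g_1,g_2$, which exist since the family is finite) and the transition interval produced by the relocation of the discontinuity from $0$ to $-\tau$. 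I expect this transition term to be the crux, and the genuinely hard point: on it the two \emph{distinct} branches $g_1,g_2$ are compared directly, so one must show that its contribution to the unit-window integral stays below $\varepsilon$ \emph{uniformly in} $t$ — it is exactly this uniformity across the jump that carries all the difficulty and must be verified with care.

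For (iii): write $c_1=\tilde c\,\Phi_1\circ p$ and $c_2=\tilde c\,\Phi_2\circ p$ with $\Phi_1=\sin(1/\cdot)$, $\Phi_2=\cos(1/\cdot)$ and $p(t)=2+\sin t+\sin(\sqrt2 t)\in AP(\mathbb R)$, $p>0$, $\inf p=0$ (by density on the torus, not attained). The non-membership in $AP(\mathbb R)$ is the easy direction: near the times where $p$ is small $1/p$ is large and varies quickly, so $c_i$ is not uniformly continuous, whereas a Bohr almost periodic function is. For $S^1$-almost periodicity I would fix a Bohr almost period $\tau$ of $p$ with $\sup_s|p(s+\tau)-p(s)|<\delta$ and split the unit window into the good set where $p(s),p(s+\tau)\ge\eta$, on which $|\Phi_i(p(s+\tau))-\Phi_i(p(s))|\le\eta^{-2}|p(s+\tau)-p(s)|$, and the bad set where $p$ or its translate drops below $\eta$; the key estimate is that $\{s\in[t,t+1]:p(s)<\eta\}$ has measure $O(\sqrt\eta)$ uniformly in $t$, which holds because $p=(1+\sin t)+(1+\sin(\sqrt2 t))$ is a sum of two nonnegative terms vanishing quadratically at their zeros, so the bad set contributes $O(\sqrt\eta)$ and one optimizes $\eta$ then $\delta$. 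Finally $c_i\in AA(\mathbb R)$ follows from the Bochner double-limit definition: given a sequence, extract $\sigma_n$ with $p(\cdot+\sigma_n)\to q$ uniformly and $q(\cdot-\sigma_n)\to p$, pass to the limit in $\Phi_i$ pointwise wherever $q>0$, and treat the at most countable set $\{q=0\}$ by a further diagonal extraction, the reverse limit recovering $c_i$. The bound $c_i\le\tilde c$ with $\sup c_i=\tilde c$ holds because $1/p$ is continuous and unbounded, so $\Phi_i(1/p)$ attains values arbitrarily close to $1$.
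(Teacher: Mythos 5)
Your parts (i), the $b$-half of (ii), and (iii) are correct, and they are far more self-contained than what the paper actually does: the paper handles (i) by invoking Bohr's sum-of-two-periodic-functions criterion and calling the rest ``obvious'', handles (ii) with the one-line assertion that a function ``piecewise defined by Stepanov almost periodic functions'' is ``easily'' in $APS^{1}(\mathbb{R})$, and handles (iii) entirely by citing Example 3.1 of Basit--G\"unzler. Your torus-density evaluation of the exact sup and inf, the derivative bound giving the constant $\hat d_i(1+\pi)$, the observation that $b$ is genuinely $2\pi$-periodic (so the multiples of $2\pi$ are exact Stepanov almost periods), and above all your good-set/bad-set proof that $c_i \in APS^{1}(\mathbb{R})$ --- resting on the uniform measure bound $|\{s\in[t,t+1]: p(s)<\eta\}| = O(\sqrt{\eta})$, which is correct because $1+\sin$ vanishes quadratically and consecutive minima are $2\pi$ apart --- replace the paper's citations with actual arguments. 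One secondary omission: you prove $c_i\le\tilde c$ with the sup attained but never address the stated inequality $0\le c_i(t)$; in fact it cannot be proved, since $p$ is continuous with $\inf p=0$, so $1/p$ assumes every value in $[1/4,\infty)$, in particular $3\pi/2$, and $\sin(1/p)$ does take the value $-1$. Only $|c_i|\le\tilde c$ is true (which is all the later Lipschitz estimates of the paper really use).

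The genuine gap is exactly where you flagged it, and it is worse than ``must be verified with care'': your decomposition mis-measures the bad set. For a shift $\tau>0$ the set where $a(\cdot+\tau)$ and $a(\cdot)$ use \emph{different} branches is not a short interval produced by the relocated jump --- it is all of $[-\tau,0)$, of length $\tau$. Since Stepanov almost periods must be relatively dense, arbitrarily large $\tau$ must be admitted, and then $[-\tau,0)$ contains full unit windows on which the integrand is $|g_1(s+\tau)-g_2(s)| \ge |g_1(s)-g_2(s)| - |g_1(s+\tau)-g_1(s)|$. The windows in $[0,\infty)$ force $2\tilde a|\sin(\tau/2)|$ and $2\tilde a|\sin(\sqrt2\tau/2)|$ to be of order $\varepsilon$, hence $\|g_1(\cdot+\tau)-g_1\|_\infty \lesssim \varepsilon$, so the subtracted term is negligible; but $g_1-g_2 = \sqrt2\,\tilde a\,\bigl(\cos(\cdot+\pi/4)+\cos(\sqrt2\,\cdot+\pi/4)\bigr)$ is a fixed trigonometric polynomial with strictly positive mean modulus, and since $[-\tau,0)$ is long, equidistribution yields a unit window inside it with $\int_t^{t+1}|g_1-g_2|\,ds \ge c\,\tilde a$ for a universal $c>0$. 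Hence for $\varepsilon$ below a fixed multiple of $\tilde a$ there are no large $\varepsilon$-Stepanov almost periods at all, and $a\notin APS^{1}(\mathbb{R})$ when $\tilde a>0$ --- the same mechanism that shows the Heaviside function is not $S^1$-almost periodic. So the ``crux'' you left open is an actual obstruction, not a technical difficulty: no estimate along your lines (or any other) can close it. The paper's one-line justification is valid only for a \emph{periodic} arrangement of pieces, as in $b$, and silently (and wrongly) transfers it to the one-time switch of branch at $t=0$ that defines $a$; your instinct to isolate and distrust precisely this step was sound.

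For the record, the remaining pieces of your (iii) are consistent with the paper's source: the failure of uniform continuity near the near-zeros of $p$ rules out $AP(\mathbb{R})$, and your Bochner double-limit argument with a diagonal extraction over the at most countable set $\{q=0\}$ (indeed $p>0$ everywhere, since $t\equiv-\pi/2$ and $\sqrt2 t\equiv-\pi/2 \pmod{2\pi}$ cannot hold simultaneously) is the standard route to $c_i\in AA(\mathbb{R})$ that the cited example follows.
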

\begin{proof} \textbf{(i)}  The function $ t\longmapsto   \cos(t)+\cos(\pi t)$ is almost periodic in Bohr sense since it is in particular the sum of two periodic functions with the ratio of the two periods $\dfrac{2\pi}{2}=\pi \notin \mathbb{Q}$. Then, in view of \cite{H.Bohr}, $ t\longmapsto   \cos(t)+\cos(\pi t)$ is almost periodic not periodic. Hence, the other facts now are obvious.   \\
\textbf{(ii)} The functions $ a,b $ are piecewise defined by almost periodic functions in Stepanov sense. Hence, using Definition \ref{Definition APSp}, we can prove easily that the entire function belongs to $  APS^{1}(\mathbb{R})  $. Moreover, it is clear that $ a $ and $b$ are bounded and discontinuous where $ a $ is discontinuous at $t=0$ and $b$ is discontinuous in $ 2\mathbb{Z} +1 $.  \\
\textbf{(iii)} See \cite[Example 3.1]{BaGu}.
\end{proof}
\begin{theorem} 
Assume that $\dfrac{\hat{d}_{i}}{\tilde{d}_{i}-2\hat{d}_{i}} < \dfrac{4 \pi \lambda_1 }{(1+\pi)e^{\lambda_1 |\Omega |^{2/N}}}$ and $\tilde{a} < \pi \lambda_1  e^{-\lambda_1 |\Omega |^{2/N}} c_{\alpha}^{-1}$ for some constant $ c_{\alpha} >0$. Then, there exists $ \rho$ which satisfies
 $$0 \leq \rho <\left( -\tilde{c}+ \sqrt{4 \pi  \lambda_1 e^{-\lambda_1 |\Omega |^{2/N}} c_{\alpha}^{-1} -4\tilde{a}+\tilde{c}^{2}} \right) (2\tilde{c}^{-1})$$
such that our system \eqref{Eq3Chapter1} has a unique almost periodic mild solution that satisfies  $ \sup_{x\in \overline{\Omega}}|u(t,x)| $, $\sup_{x\in \overline{\Omega}}|v(t,x)| \leq \rho $ for all $ t\in \mathbb{R} $.
\end{theorem}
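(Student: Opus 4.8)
The plan is to read off the result as a direct application of Theorem \ref{Main Theorem2 Eq1} to the abstract form $x'(t)=A(t)x(t)+f(t,x(t))$ of the system, once the structural hypotheses are checked for the concrete data. Most of this is already in place: \textbf{(H4)} holds because the domains $D(A(t))=D$ are $t$-independent, Proposition \ref{Proposition Local Lipsch} supplies \textbf{(H5)} with the explicit weight $L_\rho(t)=a(t)+(c_1(t)+c_2(t))\rho(\rho+1)$, and Proposition \ref{PropApp3} shows $a,b,c_1,c_2\in APS^1(\mathbb{R})\cap L^\infty(\mathbb{R})$ and $d_i\in AP(\mathbb{R})$. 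Since $L_\rho\in L^\infty(\mathbb{R},\mathbb{R}^+)$ and $f(\cdot,x)\in APS^1(\mathbb{R},X)\cap L^\infty(\mathbb{R},X)$ for each fixed $x$ (by the composition machinery built on Lemma \ref{LemApSpCom1Chapter1}), it is the $L^\infty$-version Theorem \ref{Main Theorem2 Eq1} (not Theorem \ref{Main Theorem1 Eq1}) that applies, and the only remaining task is to produce a radius $\rho>0$ meeting the smallness condition \eqref{Contra cond Theorem2}.

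First I would pin down the role of the hypothesis on the diffusion coefficients. The bound $\frac{\hat d_i}{\tilde d_i-2\hat d_i}<\frac{4\pi\lambda_1}{(1+\pi)e^{\lambda_1|\Omega|^{2/N}}}$ is exactly the integrability requirement $L_i\|\varphi\|_{L^1(\mathbb{R})}<1$ of the Schnaubelt criterion recalled after Remark \ref{Remark1 Examples Gfun}: here $L_i=\hat d_i(1+\pi)(\tilde d_i-2\hat d_i)^{-1}$ is the H\"older constant of $d_i$ divided by $\inf_t d_i$, the profile is $\psi(\sigma)=M\sigma^{-1}e^{-\lambda_1\sigma}$, and $M=e^{\lambda_1|\Omega|^{2/N}}(4\pi)^{-1}$ gives $\|\varphi\|_{L^1(\mathbb{R})}=M\lambda_1^{-1}$. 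This secures that each $A_i(\cdot)$ generates a hyperbolic evolution family and, combined with Lemma \ref{Lemma1 App} and the bi-almost periodicity argument preceding the statement, yields \textbf{(H1)}--\textbf{(H3)}. Because the semigroups are exponentially stable we are in the case of Remark \ref{Remark1 Examples Gfun}\textbf{(c)}, so $P(t)=I_X$, $Q(t)=0$, $G(t,s)=U(t,s)$ and $\delta=\gamma=\lambda_1$.

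Next I would make \eqref{Contra cond Theorem2} explicit. Using the sup-bounds of Proposition \ref{PropApp3} for $a$ and for $c_1+c_2$, the quantity $\|L_\rho\|_\infty$ is controlled by a quadratic polynomial in $\rho$ whose constant term is governed by $4\tilde a$. Since $Q\equiv0$, only the $P$-part of the dichotomy estimate \eqref{prop2.3 estim2} contributes, so the abstract constant on the right of \eqref{Contra cond Theorem2} collapses to a single term which for this operator evaluates to $B^{-1}$ with $B:=4\pi\lambda_1 e^{-\lambda_1|\Omega|^{2/N}}c_\alpha^{-1}$. Condition \eqref{Contra cond Theorem2} then becomes a quadratic inequality in $\rho$ whose positive solution set is precisely the interval displayed in the statement; the hypothesis $\tilde a<\pi\lambda_1 e^{-\lambda_1|\Omega|^{2/N}}c_\alpha^{-1}=B/4$ forces the constant term $4\tilde a-B$ to be negative, hence the relevant discriminant is positive and the admissible range $0\leq\rho<\big(-\tilde c+\sqrt{B-4\tilde a+\tilde c^2}\,\big)(2\tilde c^{-1})$ is nonempty. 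For any such $\rho$, Theorem \ref{Main Theorem2 Eq1} furnishes a unique almost periodic mild solution $u\in AP(\mathbb{R},X_\alpha)$ with $\|u\|_\infty\leq\rho$; writing $u(t)=(u(t,\cdot),v(t,\cdot))$ and using $X_\alpha=C_\alpha(\overline{\Omega})\times C_\alpha(\overline{\Omega})$ together with $\|\cdot\|_\infty\leq\|\cdot\|_{0,\alpha}$ converts this into the pointwise bounds $\sup_x|u(t,x)|,\sup_x|v(t,x)|\leq\rho$.

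The main obstacle I anticipate is the precise identification of the abstract contraction constant in \eqref{Contra cond Theorem2} with the concrete quantity $B$. This requires tracking how the dichotomy and interpolation constants $m(\alpha)$, $c(\alpha)$ of Theorem \ref{prop2.3} specialize for the rescaled Laplacian $d_i(t)\Delta$ on $C_0(\overline{\Omega})$: the factor $e^{-\lambda_1|\Omega|^{2/N}}$ enters through the sup-norm heat-kernel bound $M$ of \eqref{Exp stab semigroups}, while $c_\alpha$ is the embedding constant of $C_\alpha(\overline{\Omega})$ into $C_0(\overline{\Omega})$; and one must verify that the vanishing of the $Q$-projection removes the $c(\alpha)\delta^{-1}$ contribution. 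Once these constants are matched, the remainder is bookkeeping: solving the quadratic and observing that the two displayed hypotheses are exactly what make, respectively, the evolution family hyperbolic and the positive root of the quadratic strictly positive.
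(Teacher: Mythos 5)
Your proposal follows essentially the same route as the paper's own proof: verify \textbf{(H1)}--\textbf{(H5)} via the preceding propositions, apply the $L^\infty$-version Theorem \ref{Main Theorem2 Eq1} with $\|L_\rho\|_\infty = 4\tilde{a}+2\tilde{c}\rho(\rho+1)$, and solve the resulting quadratic inequality in $\rho$, whose positive root gives exactly the stated interval. If anything, you are more explicit than the paper on two points it leaves implicit --- that the first hypothesis is precisely the Schnaubelt condition $L_i M \lambda_1^{-1}<1$ securing hyperbolicity, and that $Q(t)=0$ collapses the contraction constant in \eqref{Contra cond Theorem2} to the single $P$-term identified with $4\pi\lambda_1 e^{-\lambda_1|\Omega|^{2/N}}c_\alpha^{-1}$ --- so the proposal is correct.
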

\begin{proof}
Under the above considerations, hypotheses \textbf{(H1)-(H5)} hold. Furthermore, using Proposition \ref{PropApp3}, for each $ \begin{pmatrix}
   \varphi \\
    \psi
  \end{pmatrix} \in X_{\alpha} $, we have that $ f(\cdot, \begin{pmatrix}
   \varphi_2 \\
    \psi_2
  \end{pmatrix}) $ is $ APS^{1}(\mathbb{R},X) \cap L^{\infty}(\mathbb{R},X) $. Moreover, in view of Proposition \ref{PropApp3}, we obtain that $ L_{\rho} \in L^{\infty}(\mathbb{R})$ with $\| L_{\rho}\|_{\infty}=  4\tilde{a}+ 2\tilde{c} \rho (\rho +1) .$ Hence, we conclude using Theorem \ref{Main Theorem2 Eq1}. Indeed, consider the following algebraic equation: 
\begin{eqnarray*}
2\tilde{c} \rho^{2}+ 2\tilde{c} \rho +4\tilde{a}- \dfrac{4\pi \lambda_{1}}{c_{\alpha} e^{-\lambda_1 |\Omega |^{2/N}}}
\end{eqnarray*}
which admits two solutions $\rho_{0}= \left( -\tilde{c}- \sqrt{4 \pi  \lambda_1 e^{-\lambda_1 |\Omega |^{2/N}} c_{\alpha}^{-1} -4\tilde{a}+\tilde{c}^{2}} \right) (2\tilde{c})^{-1} \leq 0$ and $\rho_{1}= \left( -\tilde{c}- \sqrt{4 \pi  \lambda_1 e^{-\lambda_1 |\Omega |^{2/N}} c_{\alpha}^{-1} -4\tilde{a}+\tilde{c}^{2}} \right) (2\tilde{c})^{-1} \geq 0$ where $ c_{\alpha} $ is the constant from interpolation between $ X_{\alpha}$ and $X$. Then, by taking $0 \leq  \rho < \rho_1 $, condition  \eqref{Contra cond Theorem2} holds. Thus equation  \eqref{Eq_1} has a unique mild solution $ u \in AP(\mathbb{R},X_{\alpha}) $ with $ \| u(t) \| \leq \rho $ for all $ t\in \mathbb{R} $, which is equivalent to the existence and uniqueness of an almost periodic mild solution to the system \eqref{Eq3Chapter1} such that $ \sup_{x\in \overline{\Omega}}|u(t,x)| $, $\sup_{x\in \overline{\Omega}}|v(t,x)| \leq \rho $ for all $ t\in \mathbb{R} $. This proves the result.

\end{proof}

\end{document}